\documentclass[11pt]{article}

\usepackage[T1]{fontenc}
\usepackage{lmodern}
\usepackage{microtype}
\usepackage[margin=1in]{geometry}
\usepackage{amsmath,amssymb,amsthm,mathtools}
\usepackage{aliascnt}
\usepackage{booktabs,tabularx,array}
\usepackage{graphicx}
\usepackage{enumitem}
\usepackage[authoryear]{natbib}
\usepackage{xcolor}
\newcommand{\pRevision}[1]{{\color{blue}#1}}
\usepackage[colorlinks=true,linkcolor=blue!55!black,citecolor=blue!55!black,urlcolor=blue!55!black]{hyperref}
\usepackage{doi}
\usepackage{hyperxmp}
\usepackage[nameinlink,noabbrev,capitalize]{cleveref}

\allowdisplaybreaks
\newtheorem{theorem}{Theorem}[section]
\newaliascnt{lemma}{theorem}
\newtheorem{lemma}[lemma]{Lemma}
\aliascntresetthe{lemma}
\newaliascnt{proposition}{theorem}
\newtheorem{proposition}[proposition]{Proposition}
\aliascntresetthe{proposition}
\newaliascnt{corollary}{theorem}
\newtheorem{corollary}[corollary]{Corollary}
\aliascntresetthe{corollary}
\newaliascnt{remark}{theorem}
\newtheorem{remark}[remark]{Remark}
\aliascntresetthe{remark}
\crefname{theorem}{Theorem}{Theorems}
\crefname{lemma}{Lemma}{Lemmas}
\crefname{proposition}{Proposition}{Propositions}
\crefname{corollary}{Corollary}{Corollaries}
\crefname{remark}{Remark}{Remarks}

\newcommand{\R}{\mathbb R}
\newcommand{\E}{\mathbb E}
\newcommand{\Var}{\operatorname{Var}}
\newcommand{\Gap}{\operatorname{Gap}_{\mathrm{R}}}
\newcommand{\TV}{\operatorname{TV}}
\newcommand{\cE}{\mathcal E}
\newcommand{\cK}{\mathcal K}
\newcommand{\ip}[2]{\left\langle #1,#2\right\rangle}
\newcommand{\norm}[1]{\left\lVert #1\right\rVert}
\newcommand{\abs}[1]{\left\lvert #1\right\rvert}
\newcommand{\1}{\mathbf 1}
\newcommand{\df}{\mathrm{d}}

\title{\bf A spectral gap for Metropolis-adjusted Langevin algorithm with a uniformly randomized step size}
\author{Qian Qin \\ School of Statistics \\ University of Minnesota}
\date{}
\hypersetup{pdfauthor={Qian Qin}}
\ifdefined
\fi

\begin{document}
	\maketitle
	
	\begin{abstract}
		Let $\pi(\mathrm{d} x)\propto e^{-U(x)}\, \mathrm{d} x$ on $\R^d$, where
		$U$ is continuously differentiable and $m$-strongly convex with a globally
		$L$-Lipschitz gradient, $0<m\leq L<\infty$, and $\kappa=L/m$.
		Fixed-step Metropolis-adjusted Langevin algorithm (MALA) has known
		warm-start mixing-time upper bounds of order $\kappa\sqrt d$, up to logarithmic
		factors. A spectral-gap lower bound at the corresponding scale $(\kappa \sqrt{d})^{-1}$ would give an
		upper bound of order $\kappa \sqrt{d}$ on the Monte Carlo asymptotic variance relative to
		independent sampling, uniformly over all square-integrable functions. 
		However, when $\kappa$ is bounded away from~1, the
		best fixed-step spectral gap that can be guaranteed uniformly over this
		target class is at most of order $\max\{\log(\kappa d)/(\kappa d), e^{-cd}\}$ for some positive universal constant~$c$.
		We show that uniform randomization of the step size improves this
		worst-case guarantee. At each iteration, the algorithm draws
		$h\sim\operatorname{Unif}(0,H)$ and performs one ordinary MALA transition.
		Choosing $H$ of order
		$[L\sqrt{d(1+\log d+\log\kappa)}]^{-1}$ yields a right
		spectral-gap lower bound of order
		\[
		\frac{1}{\kappa\sqrt{d(1+\log d+\log\kappa)}},
		\]
		uniformly over the target class. 
		Thus, given $\kappa > 1$, for all square-integrable functions, the ratio of the Monte Carlo asymptotic variance relative to independence sampling has an upper bound of order $\sqrt{d \log d}$.
		This contrasts with fixed-MALA, where the ratio can be as bad as $d/\log d$ in thew worst-case scenario.
		The spectral gap also gives geometric convergence of
		the lazy kernel from every initial density in $L^2(\pi)$, central limit
		theorems, and nonstationary mean-square error bounds.
		The proof combines acceptance and flow estimates at different step-size
		scales using a Cheeger-type aggregation inequality. The scale used to
		control flow out of a set may depend on its stationary mass, and the
		aggregation inequality avoids the loss from first estimating the overall
		conductance and then applying the standard Cheeger inequality.

		This work was developed with substantial assistance from ChatGPT, which suggested the uniformly randomized-step approach, developed the principal proof arguments, and generated the simulation and Lean 4 code. The human author checked and verified the mathematical content and takes full responsibility for the results.
	\end{abstract}
	
	\section{Introduction}

	Markov chain Monte Carlo (MCMC) methods are sampling algorithms that can be used to estimate expectations
	under distributions from which independent sampling is difficult.
	The Metropolis-adjusted Langevin algorithm (MALA) uses the gradient of
	the log-density to construct proposals for this purpose.
	For a target probability distribution~$\pi$ on $\R^d$ with density
	proportional to $e^{-U}$, MALA
	proposes
	\[
	Y=x-h\nabla U(x)+\sqrt{2h}\,Z,
	\qquad Z\sim N_d(0,I_d),
	\]
	and applies the Metropolis--Hastings correction. The step size~$h$ controls
	both the distance of a proposed move and its probability of acceptance.
	The geometric ergodicity and high-dimensional scaling of MALA were studied
	by \citet{RobertsTweedie1996,RobertsRosenthal1998}. More recent work gives
	quantitative bounds on the number of iterations needed to approximate~$\pi$.
	In particular, when $U$ is $m$-strongly convex and has an $L$-Lipschitz
	gradient, \citet{WuSchmidlerChen2022}, strengthening
	\citet{ChewiEtAl2021}, establish a warm-start mixing-time upper bound for lazy MALA with
	$\kappa\sqrt d$ dependence, up to logarithmic factors in $d, \kappa$, the warmness parameter~$M$, and the desired total variation distance tolerance~$\epsilon$.
	Here, $\kappa =L/m$ is the condition number, and $M = \|\df \mu/\df \pi\|_{L^{\infty}(\pi)}$ where $\mu$ is the initial distribution.
	Their step size is of order
	$(L\sqrt d)^{-1}$ up to logarithmic factors that again depend on $d,k,M,\epsilon$. 
	Their upper bound is accompanied by lower bounds that identify
	the $\kappa\sqrt d$ dependence, up to logarithmic factors, in the
	warm-start mixing problem considered there.

	For Monte Carlo estimation, it is also crucial to control the dependence
	among successive samples; see, e.g., \cite{Geyer1992,Sokal1997,JonesHobert2001,JonesQin2022}. For a stationary chain with kernel $P$,
	the Monte Carlo asymptotic variance of a test function $f$ is
	\[
	\sigma_P^2(f)=\lim_{n\to\infty}
	n\Var\left(\frac1n\sum_{k=0}^{n-1}f(X_k)\right),
	\]
	when the limit exists. For a reversible kernel with a positive right,
	or Poincar\'e, spectral gap, denoted by $\Gap(P)$, the standard spectral
	formula gives
	\[
	\sup_{\substack{f\in L^2(\pi)\\\Var_\pi(f)>0}}
	\frac{\sigma_P^2(f)}{\Var_\pi(f)}
	=\frac{2}{\Gap(P)}-1;
	\]
	see \citet{KipnisVaradhan1986,RobertsRosenthal2008VarianceBounding}.
	The ratio on the left compares the long-run variance of an MCMC average
	with that of an average of independent draws from~$\pi$.
	A lower bound on the spectral gap controls this ratio for every
	square-integrable test function.
	
	This leads to the question of whether a MALA kernel admits a
	spectral-gap lower bound of order $(\kappa\sqrt d)^{-1}$, up to
	logarithmic factors, uniformly over the target class described above.
	For a fixed step size $h>0$, let $P_h$
	denote the MALA kernel. The desired bound would have the form
	\begin{equation}\label{eq:gap-ideal}
		\Gap(P_h)\gtrsim\frac{1}{\kappa\sqrt d},
	\end{equation}
	where $h$ may depend on $d,m,L$, and $\gtrsim$ here suppresses universal
	constants and logarithmic factors in $d, \kappa$. Besides a warm-start mixing-time upper bound matching the order in \cite{WuSchmidlerChen2022},
	this would give a uniform bound of order $\kappa\sqrt d$, up to
	logarithmic factors, on the asymptotic variance relative to independent
	sampling. 
	Prior to the current work, a fixed-step spectral-gap lower bound of order $(\kappa^2d)^{-1}$ is
	available with a step size of order $(\kappa Ld)^{-1}$
	\citep{LiuTong2026}.
	
	However, \eqref{eq:gap-ideal} cannot hold uniformly over the target
	class for a single fixed step size. Spectral-gap obstructions of this
	type were established in various regimes by
	\citet{ChewiEtAl2021,LeeShenTian2021,WuSchmidlerChen2022}.
	Using a modification of their hard-target constructions,
	\cref{prop:minimax-fixed-step-ceiling} gives an explicit worst-case
	spectral-gap upper bound for every choice of $h$.
	For each fixed $h > 0$, when $\kappa$ is bounded away from~1, the largest spectral gap that can be guaranteed uniformly over the target class is at most of order
	$\max\{\log(\kappa d)/(\kappa d), e^{-cd}\}$ for some universal constant $c > 0$. Thus, given $\kappa > 1$, no fixed choice of~$h$ gives a worst-case asymptotic
	variance ratio smaller than order $d/\log d$ as $d\to\infty$.

	We study the uniformly randomized-step kernel
	\[
	\overline P_H=\frac1H\int_0^H P_h\,\df h.
	\]
	At each iteration, the algorithm draws $h\sim\operatorname{Unif}(0,H)$
	independently and performs one ordinary MALA transition. It uses one
	proposal per iteration, has essentially the same computational cost as
	fixed-step MALA, and is exactly $\pi$-reversible. Under strong
	convexity and a globally Lipschitz gradient, \cref{thm:main} gives a
	positive spectral-gap lower bound for every $H>0$. In particular,
	\cref{cor:sqrt-d-endpoint} shows that choosing $H$ of order
	\[
	\frac{1}{L\sqrt{d(1+ \log d +\log\kappa)}}
	\]
	gives a gap of at least order
	\[
	\frac{1}{\kappa\sqrt{d(1+ \log d +\log\kappa)} }.
	\]
	\Cref{cor:asymptotic-variance} shows that the asymptotic variance $\sigma_{\overline P_H}^2(f)$ is at
	most a constant multiple of
	$$\kappa\sqrt{d(1+\log d+\log\kappa)}\,\Var_\pi(f)$$ for every
	$f\in L^2(\pi)$. For fixed $\kappa>1$, this is an upper bound of order
	$\sqrt{d\log d}$ on the variance ratio, whereas every fixed step size
	has a worst-case ratio of at least order $d/\log d$.
	\Cref{cor:variance-separation} states this comparison explicitly.
	The comparison is uniform over the target class and test functions;
	it does not assert that randomization reduces the variance for every
	individual target and function.
	
	The spectral gap further supplies the hypotheses of standard results for
	Monte Carlo error. Under stationarity, every $f\in L^2(\pi)$ satisfies
	a central limit theorem
	\citep{KipnisVaradhan1986,RobertsRosenthal2008VarianceBounding}.
	\Cref{cor:nonstationary-variance} gives a finite-sample mean-square
	error bound for the lazy chain when the initial density belongs to
	$L^2(\pi)$ and $f\in L^4(\pi)$. 
	This is derived using established error estimates for general Markov chains with spectral gaps
	\citep{Rudolf2012,Paulin2015}.

	A spectral gap also gives a warm-start mixing-time upper bound, but it provides this through a stronger geometric convergence result.
	For a lazy or positive semidefinite $\pi$-reversible kernel $P$ and
	an initial distribution $\mu$ with $\df\mu/\df\pi\in L^2(\pi)$,
	\begin{equation}\label{eq:TVbound}
		\begin{aligned}
			\norm{\mu P^n-\pi}_{\mathrm{TV}}
			&\leq\frac12\norm{\frac{\df(\mu P^n)}{\df\pi}-1}_{L^2(\pi)}\\
			&\leq\frac12\norm{\frac{\df\mu}{\df\pi}-1}_{L^2(\pi)}
			[1-\Gap(P)]^n.
		\end{aligned}
	\end{equation}
	See \citet{RobertsRosenthal1997Hybrid,KontoyiannisMeyn2012,Rudolf2012}.
	Applied to $(I+\overline P_H)/2$, this yields \cref{cor:mixing}, which gives a warm-start mixing-time upper bound of order $\kappa \sqrt{d}$, up to logarithmic factors.
	Unlike warm-start mixing time bounds in \cite{ChewiEtAl2021,WuSchmidlerChen2022}, the step-size parameter~$H$ does not depend on the initial density or the desired accuracy. 
	Thus, the spectral-gap lower bound certifies convergence all the way to~$\pi$, without the need to tune~$H$ in accordance with a prescribed tolerance.
	

	The proof of our spectral gap bound uses different step-size ranges to control flow out of
	different sets. This requires more than integrating known fixed-step
	spectral-gap lower bounds. Indeed, the usual inheritance inequality
	\citep{GrazziLivingstoneRiouDurand2026} gives
	\[
	\Gap(\overline P_H)\geq\frac1H\int_0^H\Gap(P_h)\,\df h,
	\]
	but substituting available uniform fixed-step lower bounds does not give
	the desired rate. Each component gap $\Gap(P_h)$ already takes an infimum over all
	test functions. Estimating these gaps separately therefore loses the
	possibility of using different components to control different test functions.
	
	We instead estimate the one-step flow out of a set $S \subset \mathbb{R}^d$, given by
	\[
	\mathcal J_{\cK_t}(S,S^c)=\int_S \cK_t(x,S^c)\,\pi(\df x),
	\]
	for the kernels $\cK_t=(2/t)\int_{t/2}^tP_h\,\df h$, choosing the step scale $t \in (0,H]$
	according to $\pi(S)$. For sets of moderate mass, relatively large steps
	have good acceptance outside an exceptional region small enough to be
	discarded. For smaller sets, we use stronger acceptance estimates at
	smaller scales.
	Isoperimetry converts these acceptance estimates into flow bounds.
	This way, for each set $S$ satisfying $0 < \pi(S) < 1/2$, we control the flow out of~$S$ driven by MALA for a step size range $h \in (t/2,t) \subset (0,H)$.
	
	Finally, a Cheeger-type aggregation lemma combines the flow bounds into a spectral-gap bound. This procedure avoids the loss incurred by first bounding the conductance of
	the mixture $\overline P_H$ and then applying the usual Cheeger inequality. The closest analytic predecessor of the aggregation lemma is the general
	Cheeger inequality of \citet{ChenWang2000}. We give a direct proof in
	\cref{lem:fractional} and a formulation in \cref{thm:aggregation} that can be readily applied to our flow estimates.
	
	As a byproduct of our analysis, we obtain an $L^p(\pi)$ bound on the rejection probability of fixed-step MALA.
	Let $r_h(x)$ denote the probability of rejecting a proposal from~$x$ with step size~$h$.
	In \cref{prop:stationary-rejection} it is shown that, for some universal positive constants $c, C$,
	\[
	p \geq 1, \; 0 < h < \frac{c}{L\sqrt{p(d+p)}} \quad \Longrightarrow \quad \|r_h\|_{L^p(\pi)} \leq C Lh \sqrt{p(d+p)}.
	\]
	This estimate does not require convexity of~$U$.
	It complements existing MALA rejection bounds in the literature \citep{ChewiEtAl2021,WuSchmidlerChen2022,ChenGatmiry2023}.
	Its closest predecessor is the moment estimate in Lemma~17 of \cite{ChewiEtAl2021}, which is stated under strong convexity of~$U$.

	An accompanying Lean~4 development \citep{qinMALALean2026} provides proof
	scripts for the main theorem and other principal conclusions, including
	\cref{thm:main,prop:minimax-fixed-step-ceiling,lem:fractional,thm:aggregation}.
	The scope of these scripts is described in \cref{sec:lean-verification}.
	
	\subsection*{Relation to existing work}
	
	We now make some additional remarks regarding existing work relevant to our study.
	
	Quantitative spectral-gap bounds for MALA exist in the literature, but are less common than
	mixing-time bounds. 
	The classical work of \citet{RobertsTweedie1996}
	gives qualitative criteria for geometric ergodicity.
	\citet{BouRabeeHairer2013} show that, for drifts that are not globally
	Lipschitz, MALA may fail to have a spectral gap even when the underlying
	Langevin diffusion does. In the strongly convex, gradient-Lipschitz
	setting, \citet[Theorem~2]{LeeShenTian2021} give a fixed-step gap upper
	bound of order $\log d/(\kappa d)$ under their restriction
	$\kappa=O(d^4)$;
	see also \cite{ChewiEtAl2021,WuSchmidlerChen2022}.
	As mentioned earlier, \citet{LiuTong2026} obtain a spectral-gap
	lower bound of order $(\kappa^2d)^{-1}$ with a step size of order
	$(\kappa Ld)^{-1}$.

	Mixing from initial distributions whose warmness grows exponentially
	with dimension has also been studied. For example, the Gaussian
	distribution $N_d(x_\star,L^{-1}I_d)$, where $x_\star$ is the minimizer
	of $U$, has warmness at most $\kappa^{d/2}$.
	From this initialization, \citet[Theorem~5]{ChenDwivediWainwrightYu2020}
	obtain an $L^2$ mixing-time bound of order
	$\max\{\kappa d,\kappa^{3/2}\sqrt d\}\log(d/\epsilon)$ for lazy MALA,
	with a step size independent of $\epsilon$.
	\Citet{LeeShenTian2020} obtain related guarantees using blocking
	conductance and a procedure for improving a constant-accuracy bound.
	Like in \cite{ChewiEtAl2021,WuSchmidlerChen2022}, the mixing-time upper bound herein works best when the warmness grows sub-exponentially with~$d$.
	\Citet{AltschulerChewi2024} provided a way to construct such warm starts using underdamped
	Langevin Monte Carlo before running MALA.

	Randomized tuning parameters have previously been studied as a way to
	improve robustness. \Citet{GrazziLivingstoneRiouDurand2026} study
	mixtures $\int P_h\,\nu(\df h)$, establish inheritance of spectral gaps
	and weak Poincar\'e inequalities, and show that randomization can reduce
	sensitivity to step-size misspecification. Related benefits are known
	for Hamiltonian methods
	\citep{BouRabeeSanzSerna2017,ApersGriblingSzilagyi2024}.
	Our result concerns a dimension- and condition-number-explicit global
	gap for uniform-random MALA over the full strongly convex,
	gradient-Lipschitz target class.
	
	Several established methods enter the acceptance analysis.
	\Citet{ChewiEtAl2021} use a projection characterization of Metropolis
	adjustment and compare the Euler proposal with exact Langevin diffusion.
	\Citet{WuSchmidlerChen2022} compare one leapfrog step with continuous
	Hamiltonian dynamics; the relation between MALA and Hamiltonian methods
	is also discussed in \citet{Neal2011,ChenGatmiry2023}.
	Earlier rejection estimates and Wasserstein convergence bounds for
	perturbations of Gaussian measures appear in \citet{Eberle2014} under
	higher-order regularity assumptions. \Citet{MangoubiVishnoi2019} analyze
	Hamiltonian energy error under third- and fourth-order regularity
	conditions, including for nonconvex targets. Pathwise comparisons
	between Metropolized integrators and diffusions also appear in
	\citet{BouRabeeVandenEijnden2010,BouRabeeHairer2013}.
	
	Our appendix follows the diffusion-comparison approach. The stationary
	increment estimate follows from the Lyons--Zheng forward--backward
	decomposition \citep[Section~1, especially equation~(1.7)]{LyonsZheng1988};
	see also \citet[Theorem~5.7.1]{FukushimaOshimaTakeda2011}.
	We give an elementary Langevin proof by subtracting the forward and
	reversed stochastic differential equations to cancel the drift.
	A Girsanov change of path law then compares the exact diffusion with the
	process whose drift is frozen at its initial state. Endpoint conditioning
	and the Metropolis--Hastings meet identity
	\citep{Tierney1998,BilleraDiaconis2001} give an $L^p(\pi)$ rejection bound.
	The close-coupling and isoperimetric arguments use methods familiar from
	Metropolis analyses
	\citep{DwivediEtAl2019,WuSchmidlerChen2022,AndrieuLeePowerWang2024,LiuTong2026}.
	The use of set mass to select a scale is related to average conductance,
	evolving sets, blocking conductance, and spectral profiles
	\citep{LovaszKannan1999,MorrisPeres2005,GoelMontenegroTetali2006,KannanLovaszMontenegro2006,Montenegro2007}.
	
	A complementary line of work studies alternative gradient-based
	samplers, including randomized-midpoint discretizations of underdamped
	Langevin dynamics \citep{ShenLee2019} and the path-space rejection
	methods of \citet{ChenChewiRakhlinZhang2026}. Under their assumptions,
	the latter achieve expected gradient-query complexity with
	$\kappa^{2/3}d^{1/3}$ dependence, up to logarithmic factors, for sampling
	to a prescribed R\'enyi-divergence tolerance. 
	These algorithms address the complexity of producing a sample at a
	specified accuracy. Our result concerns the convergence and Monte Carlo
	variance of a randomized-step MALA kernel, and is not a claim of
	optimal query complexity among all gradient-based samplers.
	
	\subsection*{Organization}
	The rest of the article is organized as follows.
	\Cref{sec:main} states the main bound, the fixed-step obstruction, and
	the consequences for mixing, asymptotic variance, and nonstationary
	Monte Carlo error, including the variance comparison in
	\cref{cor:variance-separation}.
	\Cref{sec:ingredients} develops the ingredients of the proof, which is
	completed in \cref{sec:main-proof}.
	\Cref{sec:lean-verification} describes the Lean development, and
	\cref{sec:simulations} gives numerical comparisons of fixed-step and
	randomized-step MALA. Technical proofs are collected in the Appendices.

	\subsection*{Generative AI disclosure}
	
	This project began when the author asked ChatGPT 5.6 Sol (OpenAI) whether MALA admits a useful spectral-gap bound at the $d^{-1/2}$ step-size scale. ChatGPT suggested studying a uniformly randomized-step version of MALA—an algorithmic variant that had previously appeared in the literature—and developed the principal proof strategy and initial versions of the major mathematical arguments in this article. It also assisted in drafting and revising the manuscript and generated all code used for the numerical experiments and the Lean 4 formal verification. The human author checked the relevant literature, scrutinized and revised the AI-generated arguments, verified the mathematical claims and computations, inspected the simulation methodology and results, and confirmed that the Lean development was kernel-checked without proof placeholders or problem-specific axioms. The human author takes full responsibility for the content and correctness of the article.

	\section{Setup and Main Results} \label{sec:main}
	
	Assume throughout, except where explicitly stated otherwise, that \(d\) is a
	positive integer and that \(U:\R^d\to\R\) is continuously differentiable,
	\(m\)-strongly convex, and has a globally \(L\)-Lipschitz gradient, where
	\(0<m\leq L<\infty\). Explicitly, for every \(x,y\in\R^d\),
	\begin{equation}\label{eq:first-order-assumptions}
		\begin{aligned}
			&U(y)\geq U(x)+\ip{\nabla U(x)}{y-x}+\frac m2\norm{y-x}^2,\\
			&\norm{\nabla U(y)-\nabla U(x)}\leq L\norm{y-x}.
		\end{aligned}
	\end{equation}
	Strong convexity implies that \(e^{-U}\) is integrable.
	Let \(\kappa:=L/m\geq1\), and define the target distribution
	\[
	\pi(\df x)=\frac{e^{-U(x)}}{\int_{\R^d}e^{-U(u)}\,\df u}\,\df x.
	\]
	For \(h>0\), let \(Q_h(x,\cdot)\) be the Gaussian law
	\[
	N_d \bigl(x-h\nabla U(x),2hI_d\bigr).
	\]
	The fixed-step MALA kernel is
	\[
	P_h(x,\df y)=\alpha_h(x,y) \, Q_h(x,\df y)
	+\left[1-\int\alpha_h(x,z) \, Q_h(x,\df z)\right]\delta_x(\df y),
	\]
	where
	\[
	\alpha_h(x,y)
	=1\wedge
	\frac{e^{-U(y)} \, q_h(y,x)}{e^{-U(x)} \, q_h(x,y)}
	\]
	and \(q_h\) is the density of \(Q_h\).  The usual
	Metropolis--Hastings argument makes \(P_h\) \(\pi\)-reversible
	\citep{Tierney1998}; consequently, so is the randomized-step MALA kernel
	\[
	\overline P_H(x,\df y)=\frac{1}{H}\int_0^H P_h(x, \df y)\,\df h.
	\]
	Thus $H$ is fixed, while a new step size~$h$ is drawn independently at each
	iteration. The Metropolis--Hastings correction uses the proposal density
	for the step size drawn in that iteration.
	
	For probability measures \(\mu,\nu\) on a measurable space $(\Omega,\mathcal{F})$, write
	\[
	\TV(\mu,\nu)=\sup_{A \in \mathcal{F}}\abs{\mu(A)-\nu(A)}.
	\]
	For a probability measure $\varpi$, let $L^2(\varpi)$ be the space of
	measurable functions $f:\Omega\to\R$ such that
	$\int f^2\,\df\varpi<\infty$.  For a $\varpi$-reversible kernel $P$ and
	$f\in L^2(\varpi)$, define the Dirichlet form
	\[
	\cE_P(f,f)
	=\frac12\int[f(x)-f(y)]^2\,\varpi(\df x)P(x,\df y).\]
	The right, or Poincar\'e, spectral gap is defined as  
	\begin{equation} \label{eq:gap}
		\Gap(P)=\inf_{\substack{f\in L^2(\varpi)\\ \Var_\varpi(f)>0}}
		\frac{\cE_P(f,f)}{\Var_\varpi(f)}.
	\end{equation}
	This quantity lies in $[0,2]$. For a lazy or positive semidefinite kernel,
	it gives the geometric convergence bound in \eqref{eq:TVbound}.
	For a general reversible kernel, a positive right gap also controls
	asymptotic variance; laziness is not needed for this conclusion.
	
	%
	
	
	The following theorem gives a spectral-gap lower bound for the randomized kernel.
	The constants are uniform over all potentials satisfying
	\eqref{eq:first-order-assumptions}. 
	
	\begin{theorem} \label{thm:main}
		There is a universal constant $c_0\in(0,\infty)$ such that the following holds.  Define
		\begin{equation*}
			p_\star=1+\log d+\log\kappa.
		\end{equation*}
		Then, for every \(H \in (0,\infty)\),
		\begin{equation}\label{eq:master-gap}
			\Gap(\overline P_H)
			\geq
			c_0\frac{m}{H}
			\min\left\{
			H,
			\frac1L
			\max\left\{
			\frac{1}{\sqrt{p_\star(d+p_\star)}},
			\frac1d
			\right\}
			\right\}^{2}.
		\end{equation}
		The right spectral gap of the lazy kernel $(I+\overline P_H)/2$ is half of that of $\overline P_H$.
	\end{theorem}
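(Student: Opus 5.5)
The plan is to bound the Dirichlet form of $\overline P_H$ from below through flow estimates for the block kernels $\cK_t=(2/t)\int_{t/2}^t P_h\,\df h$, with $t\in(0,H]$ chosen according to the mass of the set. Since $\overline P_H\ge \frac{t}{2H}\cK_t$ as measures for every $t\le H$, any flow bound for $\cK_t$ out of $S$ transfers to $\overline P_H$ at the cost of a factor $t/(2H)$. Set $h_\star:=\min\{H,\,c\,L^{-1}\max\{(p_\star(d+p_\star))^{-1/2},d^{-1}\}\}$. The target is a set-wise bound
\[
\mathcal J_{\overline P_H}(S,S^c)\;\gtrsim\;\frac{t(S)}{H}\sqrt{m\,t(S)}\;\pi(S)\,\log^{1/2}\!\bigl(1/\pi(S)\bigr),
\]
valid for $0<\pi(S)\le 1/2$, where $t(S)$ is at least of order $h_\star$ for moderate masses. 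The aggregation lemma (\cref{thm:aggregation}) then converts this to a gap of order $(m/H)\,h_\star^2$.

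The first step is acceptance. Apply the $L^p(\pi)$ rejection bound \cref{prop:stationary-rejection} with $p$ tied to the mass: by Markov's inequality, $\pi(r_h>1/4)\le (4CLh\sqrt{p(d+p)})^p$. Choosing $p\asymp p_\star$ together with $h\le t\asymp h_\star$ makes this exceptional set smaller than a fixed fraction of $\pi(S)$ whenever $\pi(S)\ge (\kappa d)^{-O(1)}$. For smaller sets, take $p\asymp\log(1/\pi(S))$ and correspondingly $t\asymp (L\sqrt{p(d+p)})^{-1}$. I would still need to check that the factor $\log^{1/2}(1/\pi(S))$ coming from Gaussian isoperimetry compensates for this smaller scale. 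The $1/d$ branch should come from a crude pointwise acceptance estimate at $h\lesssim 1/(Ld)$, valid for every $x$, so that no exceptional set is needed.

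The second step is close coupling plus isoperimetry. For $x,y$ that are both good (rejection probability at most $1/4$ for all $h\in(t/2,t)$) and satisfy $\norm{x-y}\le c\sqrt{t}$, the proposals $Q_h(x,\cdot)$ and $Q_h(y,\cdot)$ have TV distance at most $1/2$ when $Lt\le1$. Averaging over $h$ then gives $\TV(\cK_t(x,\cdot),\cK_t(y,\cdot))\le 1-c'$. The $m$-strong log-concavity of $\pi$ gives the Gaussian isoperimetric inequality $\pi(A_3)\ge \sqrt m\,\delta\,\pi(A_1)\log^{1/2}(1/\pi(A_1))$ for $\delta$-separated partitions. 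The standard three-set argument, with $S_1=\{x\in S:\cK_t(x,S^c)<c'/2\}$ and the bad set removed, then gives $\mathcal J_{\cK_t}(S,S^c)\gtrsim \sqrt{mt}\,\pi(S)\log^{1/2}(1/\pi(S))$. The case split is the usual one: either half of $S$ already has large exit probability, or the partition argument applies.

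The final step feeds these set-wise bounds, which depend on $\pi(S)$, into \cref{thm:aggregation}. I expect this aggregation to be the main obstacle, since the flows come from different kernels and different scales and must be combined into a single bound on $\cE_{\overline P_H}(f,f)$ via co-area-type level-set decompositions of $f$. The delicate point is that the mass-dependent flow profile must be integrable in the aggregation inequality, so that at the scale $h_\star$ no extra logarithm is lost. I would verify that the profile $\pi(S)\mapsto (t(S)/H)\sqrt{m t(S)}\,\log^{1/2}(1/\pi(S))$ meets the lemma's hypothesis and yields $\Gap\gtrsim (m/H)h_\star^2$.

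For the lazy kernel, $\cE_{(I+\overline P_H)/2}=\tfrac12\cE_{\overline P_H}$, so its gap is exactly half of $\Gap(\overline P_H)$.
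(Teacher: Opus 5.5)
Your ingredients are the right ones: the $L^p$ rejection bound, close coupling for $\cK_t$, three-set Gaussian isoperimetry, a mass-dependent scale, and a separate pointwise-acceptance branch at $h\lesssim 1/(Ld)$. However, the decisive step is left open, and it is set up in a form that would fail.

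Your stated target is a conductance profile for $\overline P_H$ itself, $\mathcal J_{\overline P_H}(S,S^c)\gtrsim\gamma(S)\phi(S)\pi(S)$, with $\gamma=t/(2H)$ and $\phi\asymp\sqrt{mt\log(1/\pi(S))}$. Once the components are collapsed into $\overline P_H$, \cref{thm:aggregation} can only be used with $N=1$. That is Cheeger's inequality, and it gives only $\min_S[\gamma(S)\phi(S)]^2$. No integrability property of a single-kernel profile rescues this in general, because Cheeger is sharp even when the bottleneck sits in sets of tiny mass.

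Concretely, take $H\asymp(L\sqrt{dp_\star})^{-1}$ and sets with $\log(1/\pi(S))\asymp d$, where $t\asymp 1/(Ld)$. Then $(\gamma\phi)^2\asymp(t/H)^2\,mtd\asymp p_\star/(\kappa d)$. This is the fixed-step rate, short of $(\kappa\sqrt{dp_\star})^{-1}$ by a factor $\sqrt d/p_\star^{3/2}$, and it is exactly the loss flagged in \cref{ssec:aggregation}. Your last paragraph gestures at several kernels, but it never sets up the Dirichlet-form decomposition or the cost. The profile $\pi(S)\mapsto\gamma\phi$ of $\overline P_H$ that you propose to check is not what the conclusion of \cref{thm:aggregation} depends on.

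The missing construction keeps the components separate, in five steps.
\begin{enumerate}
\item Take a discrete ladder $p_j=4^j\widetilde p_\star$, with $\widetilde p_\star=A_0p_\star$, up to the first $p_J\geq d$. Set $t_j=\theta\tau_j$, where $\tau_j=b_0/(L\sqrt{p_j(d+p_j)})$ and $\theta=\min\{1,H/\tau_0\}$.
\item Then $t_{j+1}<t_j/2$, so the blocks $[t_j/2,t_j]$ are disjoint and lie in $(0,H]$. Hence \cref{lem:Kt} gives $\cE_{\overline P_H}\geq\sum_j\frac{t_j}{2H}\cE_{\cK_{t_j}}$. Your continuous choice $t(S)$ does not produce disjoint blocks, and your formula $t\asymp(L\sqrt{p(d+p)})^{-1}$ ignores $t\leq H$. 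Truncating at $H$ would make blocks coincide, which is why every level is rescaled by $\theta$.
\item Assign $S$ to the level with $p_{j-1}/2<\log(1/\pi(S))\leq p_j/2$, and apply \cref{thm:aggregation} to the kernels $\cK_{t_j}$. The cost is $\sum_j1/(\gamma_j\phi_j^2)$, in which $\gamma_j$ enters linearly rather than squared.
\item For $j\geq1$, the isoperimetric factor $\log(1/\pi(S))\asymp p_j$ cancels the $p_j$ in $t_j^2$. So each term is $\asymp HL^2(d+p_j)/(m\theta^2)$, and the $j=0$ term is $\asymp HL^2\widetilde p_\star(d+\widetilde p_\star)/(m\theta^2)$.
\item Finally, $\sum_{j\geq1}(d+p_j)\leq Jd+16d/3\lesssim\widetilde p_\star d$, because $J\leq1+\log_4 d\lesssim p_\star$. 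This is precisely how the extra logarithm you worry about gets absorbed.
\end{enumerate}

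Two smaller repairs are also needed.
\begin{itemize}
\item The exceptional set must be below $2^{-12}\pi(S)\sqrt{mt\log(1/\pi(S))}$, not merely a fixed fraction of $\pi(S)$. It has to be dominated by the isoperimetric mass of the middle set, which is why $p\geq A_0(1+\log d+\log\kappa)$.
\item A good set defined by ``$r_h\leq1/4$ for all $h\in(t/2,t)$'' is not controlled by fixed-$h$ Markov bounds, since there are uncountably many $h$. Use the averaged rejection $\frac2t\int_{t/2}^t r_h\,\df h$ together with Minkowski's integral inequality instead.
\end{itemize}
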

	
	The bound is positive for every $H>0$. As a function of~$H$, it grows
	linearly until $H$ reaches the second entry of the minimum in
	\eqref{eq:master-gap}, and then decreases as $H^{-1}$. The next corollary
	gives the dimension dependence for two useful choices of~$H$.
	
	\begin{corollary}\label{cor:sqrt-d-endpoint}
		Under the notations of Theorem~\ref{thm:main},
		let \(c \in (0,\infty)\), and let \(H=c/(L\sqrt d)\).
		Then
		\[
		\Gap(\overline P_H) \geq \frac{c_0}{\kappa\sqrt{d}}
		\min \left\{ c, \frac1c
		\max \left\{ \frac{d}{p_\star(d+p_\star)}, \frac{1}{d} \right\} \right\},
		\]
		where $c_0 \in (0,\infty)$ is a universal constant.
		In particular, since $\min\{p_\star(d+p_\star)/d,d\}\leq2p_\star$,
		\[
		\Gap(\overline P_H) \geq \frac{c_0}{\kappa\sqrt{d}}
		\min\left\{ c, \frac1{cp_\star} \right\},
		\]
		after adjusting the universal constant $c_0$.
		Choosing \(c=c'/\sqrt{p_{\star}}\) for some $c' \in (0,\infty)$ gives
		\[
		\Gap(\overline P_H)
		\geq
		\frac{c_0}{\kappa\sqrt{d\,p_\star}}
		\min\left\{c',\frac1{c'}\right\}.
		\]
	\end{corollary}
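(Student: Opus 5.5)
The plan is to substitute $H=c/(L\sqrt d)$ directly into \eqref{eq:master-gap} of \cref{thm:main} and simplify. Write $A=\max\{1/\sqrt{p_\star(d+p_\star)},1/d\}$, so that the bound reads $c_0 (m/H)\min\{H,A/L\}^2$. Two cases arise. If $H\le A/L$, the bound equals $c_0 mH = c_0 c\, m/(L\sqrt d)=c_0 c/(\kappa\sqrt d)$. If $H> A/L$, the bound equals $c_0 mA^2/(L^2H)=c_0 A^2\sqrt d/(c\kappa)$. Since $A^2=\max\{1/(p_\star(d+p_\star)),1/d^2\}$, this is
\[
\frac{c_0}{c\kappa\sqrt d}\, d A^2=\frac{c_0}{c\kappa\sqrt d}\max\left\{\frac{d}{p_\star(d+p_\star)},\frac1d\right\}.
\]
In either case the bound is at least the minimum of the two expressions, which gives the first display.

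For the second display, we bound the maximum from below. We have $\max\{d/(p_\star(d+p_\star)),1/d\}=1/\min\{p_\star(d+p_\star)/d,\,d\}$, so it suffices to check that $\min\{p_\star(d+p_\star)/d,d\}\le 2p_\star$.
\begin{itemize}[leftmargin=*]
\item If $p_\star\le d$, then $p_\star(d+p_\star)/d\le p_\star\cdot 2d/d=2p_\star$.
\item If $p_\star>d$, then $d<p_\star\le 2p_\star$.
\end{itemize}
Hence the maximum is at least $1/(2p_\star)$. The factor $1/2$ is absorbed into $c_0$, since $\min\{c,1/(2cp_\star)\}\ge\frac12\min\{c,1/(cp_\star)\}$.

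For the third display, set $c=c'/\sqrt{p_\star}$. Then
\[
\min\left\{c,\frac1{cp_\star}\right\}=\min\left\{\frac{c'}{\sqrt{p_\star}},\frac{1}{c'\sqrt{p_\star}}\right\}=\frac{1}{\sqrt{p_\star}}\min\left\{c',\frac1{c'}\right\},
\]
and combining this with the prefactor $1/(\kappa\sqrt d)$ gives $1/(\kappa\sqrt{d\,p_\star})$.

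There is no real obstacle here. The only care needed is the case split in the minimum and the elementary inequality $\min\{p_\star(d+p_\star)/d,d\}\le 2p_\star$. Note that $p_\star\ge1$ and $d\ge1$, so every quantity involved is positive and no division issue arises.
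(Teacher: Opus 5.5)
Your proposal is correct and follows the paper's route: substitute $H=c/(L\sqrt d)$ into \eqref{eq:master-gap}, note that $\min\{H,A/L\}^2=\min\{H^2,A^2/L^2\}$ splits into the two displayed branches, and then use the elementary bound $\min\{p_\star(d+p_\star)/d,d\}\le 2p_\star$, which you verify by the natural two cases. The paper gives no separate proof beyond that hint, and your reparametrization $c=c'/\sqrt{p_\star}$ is legitimate because the second display holds uniformly in $c$ with a universal $c_0$.
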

		%
		%

	Taking $c'$ to be a fixed positive constant gives a gap lower bound
	of order $(\kappa\sqrt{d\,p_\star})^{-1}$.
	With $c$ fixed instead, the larger endpoint $H=c/(L\sqrt d)$ gives a
	lower bound of order $(\kappa\sqrt d\,p_\star)^{-1}$.
	Thus the smaller choice of~$H$ incurs a square-root logarithmic loss,
	and the larger choice incurs a full logarithmic loss.
	Both choices are independent of the desired accuracy.
	The following result, proved in \cref{app:fixed-step-obstruction},
	shows why a comparable uniform gap cannot be obtained by choosing a
	single fixed step size.
	
	\begin{proposition}
		\label{prop:minimax-fixed-step-ceiling}
		Fix $\kappa_0>1$.  There are a universal constant $c \in (0,\infty)$ and a
		constant $C \in (0,\infty)$ depending only on $\kappa_0$ such that the
		following holds.  Let $d\geq2$, $0<m<L$, and
		$\kappa=L/m\geq\kappa_0$.  Then
		\begin{equation}
			\label{eq:minimax-fixed-step-ceiling}
			\sup_{h>0}
			\inf_{\substack{U\in C^\infty(\R^d)\\
					mI_d\preceq\nabla^2U(x)\preceq LI_d,\ x\in\R^d}}
			\Gap(P_{U,h})
			\leq
			C\max\left\{
			\frac{\log(\kappa d)}{\kappa d},
			e^{-cd}
			\right\}.
		\end{equation}
		Here $P_{U,h}$ denotes the MALA kernel with step size~$h$ and
		target density proportional to $e^{-U}$.
	\end{proposition}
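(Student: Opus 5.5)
For each fixed $h>0$ we will exhibit a potential $U$ in the class whose gap is small. By monotonicity in $\kappa$ we may rescale so that $L=1$ and $m=1/\kappa$; the class is invariant under $U(x)\mapsto U(sx)$ with $h\mapsto h/s^2$. The argument splits according to the size of $h$ and uses two types of hard targets. In each case we bound the gap through a test function $f=\1_S$, or a smooth surrogate, together with the Rayleigh quotient \eqref{eq:gap}: $\Gap(P)\le \mathcal J_P(S,S^c)/[\pi(S)\pi(S^c)]$.

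\emph{Small steps, $h\lesssim \log(\kappa d)/d$.} Here the test function will be linear. Take the Gaussian target $U(x)=\frac{1}{2\kappa}x_1^2+\frac12\sum_{i\ge2}x_i^2$ and $f(x)=x_1$. Since $\cE_{P_h}(f,f)\le \frac12\int (y_1-x_1)^2 Q_h(x,\df y)\pi(\df x)$, we get $\cE\lesssim h+h^2\E[x_1^2]/\kappa^2\lesssim h$, while $\Var_\pi(f)=\kappa$. So $\Gap\lesssim h/\kappa\lesssim \log(\kappa d)/(\kappa d)$. This case is elementary.

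\emph{Large steps, $h\gg \log(\kappa d)/d$.} We will adapt the constructions of \citet{LeeShenTian2021,ChewiEtAl2021}: a target for which proposals from a set $S$ of non-negligible mass are rejected with overwhelming probability, so that $P_h(x,S^c)$ is tiny on $S$. For $h\ge c_1$, a constant, we use the standard Gaussian itself. The MALA log-acceptance ratio for $N(0,I)$ with step $h$ has mean of order $-h^2 d$ and fluctuations of order $h\sqrt d$ in stationarity, so $\Gap\le \sup_x(1-r_h(x))$-type bounds give $e^{-cd}$.

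For the intermediate range $\log(\kappa d)/d\ll h\lesssim 1$ we use a mixed potential, for example $U(x)=\frac12\norm{x}^2+\frac{1-1/\kappa}{2}\,\phi(x_1)$ with curvature in $x_1$ switching between $1/\kappa$ and $1$. The curvature mismatch in the $d-1$ remaining coordinates, coupled through the drift, produces a log-acceptance term of order $-h\,d\,\delta(x)$. Here $\delta(x)$ is of order one on a region $S$ (e.g.\ $|x_1|\ge$ some level) of mass at least $1/(\kappa d)^{O(1)}$, which gives rejection probability $e^{-chd}\le (\kappa d)^{-C'}$. Flow out of $S$ in one step is therefore at most $\pi(S)(\kappa d)^{-C'}$ plus the Gaussian-tail probability of jumping out directly. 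Combining these with $\pi(S^c)\ge 1/2$ yields $\Gap\lesssim (\kappa d)^{-1}$ once the constant in $h\ge C\log(\kappa d)/d$ is large enough. The constant $C$ may depend on $\kappa_0$ because the mismatch size is $1-1/\kappa\ge 1-1/\kappa_0$.

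\textbf{Main obstacle.} The delicate step is the intermediate-$h$ construction. We must keep $mI\preceq\nabla^2U\preceq LI$ exactly with a smooth $\phi$, while computing the acceptance ratio precisely enough to show uniformly small acceptance on $S$ with $\pi(S)$ not exponentially small. We would first attempt the one-dimensional perturbation above and evaluate the log-ratio $U(x)-U(y)+\log q_h(y,x)-\log q_h(x,y)$ explicitly in the $d-1$ Gaussian coordinates. There it is a quadratic form in $Z$ with mean $\asymp -hd(1-1/\kappa)^2$ on $S$, and a Gaussian chi-square concentration bound controls the fluctuations.
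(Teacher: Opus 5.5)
Your two outer regimes are essentially fine. The small-$h$ case is the paper's test function $f(x)=x_1$. For $h\ge c_1$ the standard Gaussian works once you take $S$ to be a small ball around the origin, where the acceptance probability is exactly $(1+h^2)^{-d/2}$, rather than a supremum over all $x$.

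The genuine gap is the intermediate range $C\log(\kappa d)/d\le h\le c_1$. This range is the whole content of the proposition, and the mechanism you describe there does not exist.

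\begin{itemize}
\item Your potential $U(x)=\frac12\norm{x}^2+\frac{1-1/\kappa}{2}\phi(x_1)$ defines a product target, so the log Hastings ratio \eqref{eq:generic-log-Hastings-ratio} is a sum over coordinates. For $i\ge2$ the gradient component is $x_i$, which does not involve $x_1$. There is no curvature mismatch in those coordinates and no coupling through the drift.
\item Write $x'=(x_2,\dots,x_d)$ and $y'=(1-h)x'+\sqrt{2h}\,Z'$. The contribution of these coordinates is exactly the Gaussian term $\frac h4\bigl(\norm{x'}^2-\norm{y'}^2\bigr)$. For $h\le1$ it is at least $-\frac{h^2}{2}\norm{Z'}^2$ whenever $\ip{x'}{Z'}\le0$.
\item The one-dimensional $x_1$ term is at least $-hZ_1^2/2$ for any admissible $\phi$, by the Baillon--Haddad argument following \eqref{eq:MH-ratio-expansion}.
\item Using the symmetry $Z'\mapsto-Z'$, the acceptance probability of your target is therefore at least $\frac12(1+h)^{-1/2}(1+h^2)^{-(d-1)/2}$ at every point.
\end{itemize}

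This lower bound is not small unless $h\gtrsim\sqrt{\log(\kappa d)/d}$. So your construction certifies only a ceiling of order $\sqrt{\log(\kappa d)}/(\kappa\sqrt d)$, no better than the pure Gaussian. Any potential whose curvature varies on a length scale independent of $h$ faces essentially the same $h^2d$ barrier.

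The missing idea is to let the target depend on $h$ and to perturb all $d-1$ remaining coordinates at the proposal length scale $\sqrt h$, as in \eqref{eq:generic-hard-potential}.

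\begin{itemize}
\item Each summand $\frac{L+m}{4}x_i^2-\frac{(L-m)h}{2}\cos(x_i/\sqrt h)$ has second derivative in $[m,L]$.
\item At the origin, $\nabla U_{d,h}(0)=0$ and $Y=\sqrt{2h}\,Z$. Each such coordinate contributes $\frac{(L-m)h}{2}A(\sqrt{2}\,Z_i)$ to the log ratio, where $A(v)=\cos v-1+\frac{v\sin v}{2}$ and $\E A(\sqrt{2}\,Z_i)=2/e-1<0$. The remaining terms are nonpositive.
\item Sub-Gaussian concentration then bounds the acceptance at the origin by $2\exp[-c(d-1)\min\{(L-m)h,1\}]$. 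The log-rejection is thus of order $hd$ rather than $h^2d$.
\end{itemize}

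You also do not need $\pi(S)\ge(\kappa d)^{-O(1)}$ or any estimate of direct jumps out of $S$. Leaving $S$ requires an accepted proposal, so $\mathcal J_{P_h}(S,S^c)/[\pi(S)\pi(S^c)]\le2\sup_{x\in S}a_h(x)$ when $\pi(S)\le1/2$, where $a_h$ is the pointwise acceptance probability. By continuity of $a_h$, an arbitrarily small ball around the origin suffices.

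Because the first coordinate of this target is still $N(0,m^{-1})$, the same family also gives $\Gap(P_h)\le mh+(mh)^2/2$ via $f=x_1$. One family therefore covers every $h$. Splitting at $Lh\asymp\log(\kappa d)/[(1-\kappa^{-1})d]$ gives the proposition, with the $\kappa_0$-dependence of $C$ entering through $1-\kappa^{-1}\ge1-\kappa_0^{-1}$.
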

	
	\begin{remark} \label{rem:minimax-fixed-step-ceiling}
		In \cref{app:small-fixed-step-gap}, it is shown that, when $U$ satisfies \eqref{eq:first-order-assumptions}, it holds that $\Gap(P_{1/(2Ld)}) \geq c (\kappa d)^{-1}$ for some universal constant $c \in (0,\infty)$, so the upper bound in \cref{prop:minimax-fixed-step-ceiling} can be matched with a lower bound.
	\end{remark}
	
	The order of the supremum and infimum in
	\eqref{eq:minimax-fixed-step-ceiling} is important: for each choice
	of~$h$, there is a hard potential, which may depend on~$h$.
	The result concerns selecting a step size uniformly over the target
	class. For each fixed $\kappa>1$, it bounds the best such gap by a
	constant multiple of $(\log d)/d$ as $d\to\infty$, whereas
	\cref{cor:sqrt-d-endpoint} gives the randomized kernel a uniform lower
	bound of order $(d\log d)^{-1/2}$. This separation is polynomial in
	dimension up to logarithmic factors. The hard-target construction builds
	on \citet{ChewiEtAl2021,LeeShenTian2021,WuSchmidlerChen2022}; our
	modification gives the explicit dependence in
	\eqref{eq:minimax-fixed-step-ceiling} for all $\kappa\geq\kappa_0>1$.
	

	\cref{cor:sqrt-d-endpoint} and \eqref{eq:TVbound} give a mixing-time bound.
	Let \(\mu\ll\pi\) be a probability distribution on \(\R^d\) with
	\(\df\mu/\df\pi\in L^2(\pi)\). For a Markov kernel \(P\) with
	stationary distribution \(\pi\) and \(\epsilon>0\), define
	\[
	n_{\mathrm{mix}}(P,\mu,\epsilon)
	=\inf\left\{n\in\{0,1,\ldots\}:
	\norm{\mu P^n-\pi}_{\mathrm{TV}}\leq\epsilon\right\},
	\qquad \inf\varnothing=\infty.
	\]
	Then the following holds.
	
	\begin{corollary} \label{cor:mixing}
		Under the notations of Theorem~\ref{thm:main}, let \(H=c/(L\sqrt{d\,p_\star})\), where \(c \in (0,\infty)\), and put
		\[
		\ell_{\mu,\epsilon}
		=\log\max\left\{1,
		\frac{1}{2\epsilon}
		\norm{\frac{\df\mu}{\df\pi}-1}_{L^2(\pi)}\right\}.
		\]
		Then
		\[
		\begin{aligned}
			n_{\mathrm{mix}}\left(\frac{I+\overline P_H}{2},\mu,\epsilon\right)
			&\leq
			\left\lceil\frac{2\ell_{\mu,\epsilon}}
			{\Gap(\overline P_H)}\right\rceil
			\leq
			\left\lceil
			C\ell_{\mu,\epsilon}\,\kappa\sqrt {d p_{\star} } \right\rceil,
		\end{aligned}
		\]
		where \(C \in (0,\infty)\) is $\max\{c,c^{-1}\}$ multiplied by a universal constant.
	\end{corollary}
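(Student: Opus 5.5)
This is a direct combination of \eqref{eq:TVbound}, the lazy-kernel statement in \cref{thm:main}, and \cref{cor:sqrt-d-endpoint}. Write $P=(I+\overline P_H)/2$. It is $\pi$-reversible, and it is positive semidefinite: the operator $\overline P_H$ is self-adjoint on $L^2(\pi)$ with spectrum in $[-1,1]$, so $P$ has spectrum in $[0,1]$. By the last sentence of \cref{thm:main}, $\Gap(P)=\Gap(\overline P_H)/2=:\gamma/2$. Applying \eqref{eq:TVbound} to $P$ gives
\[
\norm{\mu P^n-\pi}_{\mathrm{TV}}\leq\frac12\norm{\frac{\df\mu}{\df\pi}-1}_{L^2(\pi)}(1-\gamma/2)^n\leq\frac12\norm{\frac{\df\mu}{\df\pi}-1}_{L^2(\pi)}e^{-n\gamma/2},
\]
using $1-u\leq e^{-u}$ with $u=\gamma/2\in[0,1]$. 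We may assume $\gamma>0$; \cref{thm:main} gives this, and otherwise the bound is trivially $\infty$.

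\emph{First inequality.} Set $n=\lceil 2\ell_{\mu,\epsilon}/\gamma\rceil$.

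If $\tfrac12\|\df\mu/\df\pi-1\|\leq\epsilon$, then $\ell_{\mu,\epsilon}=0$. In this case $n=0$ works directly, or the display above gives the bound for every $n$.

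Otherwise $\ell_{\mu,\epsilon}=\log\bigl(\tfrac1{2\epsilon}\|\df\mu/\df\pi-1\|\bigr)$. Then $e^{-n\gamma/2}\leq e^{-\ell_{\mu,\epsilon}}=2\epsilon/\|\df\mu/\df\pi-1\|$, and the right side of the display is at most $\epsilon$.

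Since $n_{\mathrm{mix}}$ is an infimum, $n_{\mathrm{mix}}(P,\mu,\epsilon)\leq\lceil 2\ell_{\mu,\epsilon}/\gamma\rceil$.

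\emph{Second inequality.} The choice $H=c/(L\sqrt{d\,p_\star})$ is the last case of \cref{cor:sqrt-d-endpoint} with $c'=c$. That case gives
\[
\gamma\geq\frac{c_0}{\kappa\sqrt{d\,p_\star}}\min\{c,1/c\}=\frac{c_0}{\max\{c,c^{-1}\}\,\kappa\sqrt{d\,p_\star}}.
\]
Hence
\[
\frac{2\ell_{\mu,\epsilon}}{\gamma}\leq\frac{2\max\{c,c^{-1}\}}{c_0}\,\ell_{\mu,\epsilon}\,\kappa\sqrt{d\,p_\star}.
\]
Monotonicity of the ceiling then yields the claim with $C=(2/c_0)\max\{c,c^{-1}\}$, which is $\max\{c,c^{-1}\}$ times a universal constant.

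There is no real obstacle here. The only points needing care are that the lazy kernel is positive semidefinite, so that \eqref{eq:TVbound} applies, and the edge case $\ell_{\mu,\epsilon}=0$.
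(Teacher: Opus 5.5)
Your proposal is correct and follows the paper's own proof: apply \eqref{eq:TVbound} to the positive semidefinite lazy kernel, whose gap is $\Gap(\overline P_H)/2$, use $1-x\le e^{-x}$, treat $\ell_{\mu,\epsilon}=0$ separately, and invoke \cref{cor:sqrt-d-endpoint} with $c'=c$. The edge case $\Gap(\overline P_H)/2=1$, which the paper mentions separately, is already covered by your argument, since $(1-u)^n\le e^{-nu}$ holds for every $u\in[0,1]$ and every integer $n\geq 0$.
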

	\begin{proof}
		Since \(0<\Gap(\overline P_H)/2\leq1\), the inequality
		\(1-x\leq e^{-x}\) for \(x\in[0,1]\), together with
		\eqref{eq:TVbound} and \cref{cor:sqrt-d-endpoint} gives the desired mixing time bound.
		The logarithm defining
		\(\ell_{\mu,\epsilon}\) is finite and nonnegative, including when
		\(\mu=\pi\). If \(\ell_{\mu,\epsilon}=0\), the initial total-variation
		error is already at most \(\epsilon\), so the mixing time is zero.
		In the edge case \(\Gap(\overline P_H)/2=1\),
		\eqref{eq:TVbound} shows that the chain has law \(\pi\) after one step
		from \(\mu\); the same bound remains valid.
	\end{proof}

	The bound in \cref{cor:mixing} holds for every initial density in
	$L^2(\pi)$, using the same $H$ and the same geometric rate. A bounded
	density ratio is not required. 
	
	For comparison, \citet[Theorem~1]{WuSchmidlerChen2022} use a fixed
	step size of order
	\[
	[L\sqrt d\,\log^2(\max\{\kappa,d,M/\epsilon,c\})]^{-1}
	\]
	for an $M$-warm start (i.e., $\df \mu/\df \pi \leq M$), where $c>0$ is universal. Their acceptance estimates
	control the chain outside an exceptional region whose allowed size
	depends on $M$ and $\epsilon$. Decreasing this size can require a
	smaller step. Thus that theorem gives a family of mixing guarantees
	with tolerance-dependent tuning, whereas \cref{cor:mixing} follows
	from a geometric $L^2$ bound for one kernel. This is a distinction
	between the guarantees, and does not mean that a fixed-step MALA chain
	must be retuned to continue converging.

	A second consequence concerns Monte Carlo averages. For a stationary
	chain $(X_k)_{k\geq0}$ with kernel~$P$, write
	\[
	\sigma_P^2(f)=\lim_{n\to\infty}
	n\Var\left(\frac1n\sum_{k=0}^{n-1}f(X_k)\right)
	\]
	when this limit exists, allowing the value $+\infty$.
	The next corollary gives a finite bound and a central limit theorem.
	It is a consequence of the
	standard spectral formula for asymptotic variance
	\citep{KipnisVaradhan1986,Paulin2015}. Its role is to make the
	Monte Carlo implication of the spectral gap explicit.

	\begin{corollary}\label{cor:asymptotic-variance}
		Under the notation of \cref{thm:main}, let
		$H=c/(L\sqrt{d\,p_\star})$, where $c \in (0,\infty)$.
		For every real-valued measurable function $f\in L^2(\pi)$,
		the stationary asymptotic variance exists and satisfies
		\begin{equation}\label{eq:asymptotic-variance}
			\sigma_{\overline P_H}^2(f)
			\leq\left(\frac{2}{\Gap(\overline P_H)}-1\right)
			\Var_\pi(f)
			\leq C\kappa\sqrt{d\,p_\star}\,\Var_\pi(f),
		\end{equation}
		where $C\in(0,\infty)$ is $\max\{c,c^{-1}\}$ multiplied by a universal constant.
		Moreover, if $(X_k)_{k\geq0}$ has kernel $\overline P_H$ and
		an arbitrary initial probability distribution $\mu$ on $\R^d$, then
		\[
		\frac1{\sqrt n}\sum_{k=0}^{n-1}
		\left[f(X_k)-\int f\,\df\pi\right]
		\xrightarrow{\mathrm d}
		N\bigl(0,\sigma_{\overline P_H}^2(f)\bigr).
		\]
		For the half-lazy kernel $(I+\overline P_H)/2$, the corresponding
		statements hold with its own stationary asymptotic variance:
		\[
		\sigma_{(I+\overline P_H)/2}^2(f)
		\leq\left(\frac{4}{\Gap(\overline P_H)}-1\right)
		\Var_\pi(f)
		\leq 2C\kappa\sqrt{d\,p_\star}\,\Var_\pi(f).
		\]
	\end{corollary}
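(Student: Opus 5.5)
The plan is to derive \cref{cor:asymptotic-variance} from \cref{cor:sqrt-d-endpoint} together with the standard spectral theory of reversible Markov operators on $L^2(\pi)$. The argument has three steps: the variance bound for $\overline P_H$, the central limit theorem, and the lazy kernel.

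\textbf{Variance bound.} Take $f\in L^2(\pi)$ and set $g=f-\int f\,\df\pi$. Since $\overline P_H$ is $\pi$-reversible, it is a self-adjoint contraction on $L^2_0(\pi)$. Its spectral measure $E_g$ is supported in $[-1,1-\Gap(\overline P_H)]$, because the right gap is exactly the bottom of the spectrum of $I-\overline P_H$ on $L^2_0(\pi)$.

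For a stationary chain, $\Cov(g(X_0),g(X_k))=\int\lambda^k\,E_g(\df\lambda)$. Hence
\[
n\Var\Bigl(\frac1n\sum_{k<n} f(X_k)\Bigr)=\int\Bigl[1+2\sum_{k=1}^{n-1}\Bigl(1-\frac kn\Bigr)\lambda^k\Bigr]E_g(\df\lambda).
\]
On the support the integrand is bounded uniformly in $n$. At $\lambda=-1$ the bracket stays in $[0,1]$, and for $\lambda\le 1-\Gap$ the geometric tail is summable. By dominated convergence the limit exists and equals $\int\frac{1+\lambda}{1-\lambda}\,E_g(\df\lambda)$.

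The function $\lambda\mapsto(1+\lambda)/(1-\lambda)$ is increasing on $[-1,1)$, so on the support it is at most $2/\Gap-1$. This gives the first inequality, since $E_g$ has total mass $\Var_\pi(f)$.

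The second inequality follows by inserting $\Gap(\overline P_H)\ge c_0\min\{c',1/c'\}/(\kappa\sqrt{d p_\star})$ from \cref{cor:sqrt-d-endpoint} with $c'=c$. This yields $2/\Gap-1\le 2\max\{c,c^{-1}\}\kappa\sqrt{dp_\star}/c_0$.

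\textbf{Central limit theorem.} Under stationarity, the finiteness of $\int(1-\lambda)^{-1}E_g(\df\lambda)$, which is guaranteed by the gap, is precisely the Kipnis--Varadhan condition. So the CLT holds when $X_0\sim\pi$ \citep{KipnisVaradhan1986}.

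To pass to an arbitrary initial law $\mu$, I would use a standard transfer result: for a $\pi$-irreducible Harris recurrent chain, a CLT under stationarity implies the same CLT from every starting distribution (Meyn--Tweedie, Prop.~17.1.6; see also \citet{RobertsRosenthal2008VarianceBounding}). The remaining work is therefore to verify Harris recurrence of $\overline P_H$, and \textbf{this is the step I expect to be the main obstacle}.

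For Harris recurrence, I would first establish $\pi$-irreducibility. The Gaussian proposal has an everywhere positive density, and the acceptance probability $\alpha_h(x,y)$ is strictly positive for all $x,y$. Hence the absolutely continuous part of $\overline P_H(x,\cdot)$ has a positive density with respect to Lebesgue measure. This makes the chain $\pi$-irreducible and aperiodic, and also shows $\overline P_H(x,\cdot)$ is not singular to $\pi$ for any $x$.

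Harris recurrence of Metropolis--Hastings chains with such proposals then follows from Tierney's (1994) Corollary~2. That result states that a $\pi$-irreducible MH kernel is Harris recurrent. It extends to the mixture $\overline P_H$, either by writing it as an MH kernel with proposal $\frac1H\int_0^H Q_h\,\df h$ augmented by $h$, or by repeating Tierney's argument. The argument shows that harmonic functions are constant: absolute continuity of the accepted part forces any bounded harmonic function to be $\pi$-a.e.\ constant and hence everywhere constant.

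\textbf{Lazy kernel.} The lazy kernel $(I+\overline P_H)/2$ is again $\pi$-reversible and is Harris recurrent with the same invariant law. Its spectrum is $(1+\lambda)/2$, and its gap is $\Gap(\overline P_H)/2$ by \cref{thm:main}. The same spectral computation therefore gives the bound $4/\Gap(\overline P_H)-1$, which in turn gives the factor $2C$. The CLT transfers by the same reasoning as above.
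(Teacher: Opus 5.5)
Your proposal is correct and follows essentially the paper's route: the spectral variance formula bounded via the gap and \cref{cor:sqrt-d-endpoint}, a stationary CLT transferred to arbitrary initial laws by Harris ergodicity, and the same argument for the lazy kernel. You differ only in deriving the variance formula by dominated convergence rather than citing it, and in obtaining Harris recurrence from Tierney's bounded-harmonic-function argument instead of the paper's criterion that every $\pi$-full set $A$ is reached almost surely, which the paper checks via $\mathbb P_x(X_1\notin A,\ldots,X_n\notin A)=r(x)^n\to0$. One small fix: for $\lambda\in(-1,0)$ near $-1$, absolute geometric summability gives no bound uniform in $\lambda$, so bound the bracket there by its alternating structure instead (it stays in $[-1,1]$); this keeps your dominated-convergence step valid.
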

	
	\begin{proof}
		Let $f_0=f-\int f\,\df\pi$.
		Regard $\overline P_H$ as a self-adjoint contraction on $L^2(\pi)$.
		On the mean-zero subspace $\{g \in L^2(\pi): \int g\df \pi = 0\}$, the definition of the spectral gap~\eqref{eq:gap}
		gives $I- \overline P_H \geq\Gap(\overline P_H)I>0$. Thus its inverse exists there and
		\[
		0\leq(I-\overline P_H)^{-1}\leq\Gap(\overline P_H)^{-1}I.
		\]
		The spectral formula for stationary asymptotic variance
		\citep{KipnisVaradhan1986,Paulin2015} gives
		\[
		\begin{aligned}
			\sigma_{\overline P_H}^2(f)
			&=2\ip{f_0}{(I-\overline P_H)^{-1}f_0}_{L^2(\pi)}
			-\norm{f_0}_{L^2(\pi)}^2\\
			&\leq\left(\frac2{\Gap(\overline P_H)}-1\right)\Var_\pi(f).
		\end{aligned}
		\]
		\Cref{cor:sqrt-d-endpoint} gives the final inequality in
		\eqref{eq:asymptotic-variance}.
		
		We next verify Harris ergodicity. Since $q_h(x,y)>0$ and
		$\alpha_h(x,y)>0$ for every $h>0$ and $x,y\in\R^d$,
		$
		\overline P_H(x,A)>0$ whenever $\pi(A)>0$.
		Hence $\overline P_H$ is $\pi$-irreducible. It is also aperiodic:
		a period greater than one would require a set $D$ with
		$\pi(D)>0$ and $\overline P_H(x,D)=0$ for $\pi$-almost every $x\in D$,
		contradicting the preceding inequality.
		
		Let $A$ be measurable with $\pi(A)=1$. Since $\pi$ has an
		everywhere positive density, every accepted proposal belongs to
		$A$ almost surely. Write $r(x)=\overline P_H(x,\{x\})<1$ for the rejection
		probability. For $x\notin A$, the chain remains at $x$ until its
		first accepted move, and therefore
		\[
		\mathbb P_x(X_1\notin A,\ldots,X_n\notin A)=r(x)^n
		\longrightarrow0.
		\]
		For $x\in A$, we have $\overline P_H(x,A)=1$. Thus every set of full
		$\pi$-measure is reached almost surely from every state.
		By \citet[Theorem~6(v)]{RobertsRosenthal2006Harris}, $\overline P_H$ is
		Harris recurrent. Its invariant probability distribution is
		$\pi$, so it is positive Harris recurrent and aperiodic.
		
		Under stationarity, the finite variance bound and reversibility
		give the central limit theorem by
		\citet[Theorem~7]{RobertsRosenthal2008VarianceBounding};
		see also \citet{KipnisVaradhan1986}.
		For a Harris ergodic chain, a central limit theorem under
		stationarity holds from every initial distribution with the
		same limiting variance; see \citet[Remark~6]{Jones2004CLT}.
		This proves the assertion for arbitrary $\mu$.
		
		Finally, $(I+\overline P_H)/2$ is reversible and has right spectral gap
		$\Gap(\overline P_H)/2$. It has the same properties used above: its
		accepted moves have a positive density, and its holding
		probability is $[1+r(x)]/2<1$ at every state. Applying the
		same arguments proves the half-lazy assertions.
	\end{proof}

	For independent samples, the asymptotic variance is $\Var_\pi(f)$.
	Thus \cref{cor:asymptotic-variance} bounds the variance inflation
	uniformly over the target class and all square-integrable observables.
	The same value of~$H$ is used for every observable.

	The next corollary compares the worst-case asymptotic variance for $P_h$ and $\overline P_H$.

	\begin{corollary}\label{cor:variance-separation}
		Fix $\kappa_0>1$. There are universal constants $a,C>0$ and a
		constant $c>0$ depending only on $\kappa_0$ such that the following
		holds. Let $d\geq2$, $0<m<L$, and $\kappa=L/m\geq\kappa_0$, and
		write $p_\star=1+\log d+\log\kappa$.
		\begin{enumerate}[label=(\roman*)]
			\item For $H=1/(L\sqrt{d\,p_\star})$, every potential $U$ satisfying
			\eqref{eq:first-order-assumptions}, and every $f\in L^2(\pi)$,
			\[
			\sigma_{\overline P_H}^2(f)
			\leq C\kappa\sqrt{d\,p_\star}\,\Var_\pi(f).
			\]
			
			\item For every $h>0$, there are a smooth potential $U$ satisfying
			\eqref{eq:first-order-assumptions} and a function $f\in L^2(\pi_U)$
			such that
			\[
			\Var_{\pi_U}(f)=1,
			\qquad
			\sigma_{P_{U,h}}^2(f)
			\geq c\min\left\{
			\frac{\kappa d}{\log(\kappa d)},\ e^{ad}
			\right\}.
			\]
		\end{enumerate}
		Here $\pi_U$ is the target associated with $U$. The asymptotic
		variance in (ii) is allowed to be infinite.
	\end{corollary}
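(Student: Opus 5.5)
Part (i) is a direct specialization of \cref{cor:asymptotic-variance}. Take $c=1$ there, so $H=1/(L\sqrt{d\,p_\star})$. Then $\max\{c,c^{-1}\}=1$, and \eqref{eq:asymptotic-variance} gives
\[
\sigma_{\overline P_H}^2(f)\leq C\kappa\sqrt{d\,p_\star}\,\Var_\pi(f)
\]
with $C$ a universal constant, for every $U$ satisfying \eqref{eq:first-order-assumptions} and every $f\in L^2(\pi)$. Nothing further is needed; the constant is independent of $\kappa_0$, as the statement requires.

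For part (ii), I would combine \cref{prop:minimax-fixed-step-ceiling} with the converse direction of the spectral variance formula. Fix $h>0$ and let $G:=C\max\{\log(\kappa d)/(\kappa d),e^{-cd}\}$ be the bound in \eqref{eq:minimax-fixed-step-ceiling}, where $c$ is universal and $C=C(\kappa_0)$. Since the supremum over $h$ of the infimum over smooth $U$ is at most $G$, for this fixed $h$ there is a smooth $U$ with $mI_d\preceq\nabla^2U\preceq LI_d$ and $\Gap(P_{U,h})\leq 2G$. Such a $U$ satisfies \eqref{eq:first-order-assumptions}. The factor $2$ handles the case where the infimum is not attained.

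The central step is to convert a small right gap into a test function with large asymptotic variance. Let $P=P_{U,h}$, which is self-adjoint on $L^2_0(\pi_U)$. Its spectrum lies in $[-1,1]$, and $\Gap(P)=1-\sup\operatorname{spec}(P|_{L^2_0})$. Let $E$ be the spectral measure. Because $\Gap\leq 2G$, the spectral subspace of $P|_{L^2_0}$ for the interval $[1-4G,1]$ is nonzero (I may take $4G<1$, since otherwise the claim is trivial after shrinking constants). Choose $f$ in this subspace with $\norm{f}_{L^2}=1$ and mean zero, so $\Var_{\pi_U}(f)=1$. The Kipnis--Varadhan spectral representation gives
\[
\sigma_P^2(f)=\int\frac{1+\lambda}{1-\lambda}\,\mu_f(\df\lambda),
\]
with the value $+\infty$ allowed, where $\mu_f$ is supported in $[1-4G,1]$. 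On that interval the integrand is at least $(2-4G)/(4G)\geq 1/(4G)$ when $G\leq 1/4$. Hence $\sigma_P^2(f)\geq 1/(4G)$.

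This is the step that needs care. The difficulty is justifying the spectral formula for $\sigma^2$ as a limit, possibly infinite, for a general reversible kernel with a possible atom at $-1$, and making sure the stationary variance limit exists in $[0,\infty]$. I would cite \citet{KipnisVaradhan1986} together with the fact that for reversible chains $n\Var(\bar f_n)=\int\frac{1+\lambda}{1-\lambda}\bigl(1-\frac{2\lambda(1-\lambda^n)}{n(1+\lambda)(1-\lambda)}\bigr)\mu_f(\df\lambda)$ converges by monotone/dominated convergence on the support $[1-4G,1]$. Since this support is away from $-1$, the integrand is nonnegative there and monotone convergence applies.

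Finally, $1/(4G)=\min\{\kappa d/\log(\kappa d),e^{cd}\}/(4C)$. Setting $a=c$ and absorbing $1/(4C(\kappa_0))$ into the constant of (ii), which may depend on $\kappa_0$, finishes the proof. Note the roles of the letters: the statement's universal $a$ is the proposition's $c$, and the statement's $\kappa_0$-dependent $c$ comes from $C$. The degenerate case $4G\geq 1$ is covered by the same bound after shrinking the constant, using any nonconstant $f$ with $\sigma^2\geq 0$. To handle this cleanly, I would instead require that the right-hand side be at most a fixed constant, which holds after shrinking the constant.
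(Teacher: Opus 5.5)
Your part (i) is exactly the paper's argument. For part (ii) your main line is also the paper's: take the hard potential from \cref{prop:minimax-fixed-step-ceiling} with $\Gap(P_{U,h})\leq 2G$ and convert the small gap into a large asymptotic variance. The paper does this through the identity $\sup_g\sigma_P^2(g)/\Var_\pi(g)=2/\Gap(P)-1$ and then picks a near-maximizer. You build the near-maximizer explicitly from the spectral projection onto $[1-4G,1]$. Your monotone-convergence evaluation of $\lim_n n\Var(\bar f_n)$ on that window, which lies in $(0,1]$, is correct and makes the existence of the limit self-contained. When $4G<1$ this gives $\sigma_P^2(f)\geq 1/(4G)$, as required.

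The gap is the regime $4G\geq 1$, and nothing excludes it. The constant $C(\kappa_0)$ in \cref{prop:minimax-fixed-step-ceiling} is uncontrolled, so for bounded $d$ and $\kappa$ the bound $2G$ may exceed $1/2$ and tell you nothing. There you say the claim follows ``using any nonconstant $f$ with $\sigma^2\geq0$.'' But the target $c\min\{\kappa d/\log(\kappa d),e^{ad}\}$ is strictly positive, so $\sigma^2\geq 0$ is not enough. Shrinking $c$ only reduces the task to exhibiting some $f$ with $\sigma_P^2(f)\geq c_1\Var_\pi(f)$ for a fixed $c_1>0$.

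That step does not follow from abstract spectral theory. A reversible kernel whose spectrum on the mean-zero subspace sits near $-1$ has small $\sigma^2(f)$ for every $f$; the deterministic flip on two points has $\sigma^2(f)=0$ for all $f$.

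The missing input is the paper's observation that $\Gap(P_{U,h})\leq 1$. Testing \eqref{eq:gap} with $\1_A$ gives $\Gap(P_{U,h})\leq \mathcal J(A,A^c)/[\pi_U(A)(1-\pi_U(A))]\leq 1/(1-\pi_U(A))$, and $\pi_U$ has sets of arbitrarily small positive mass. Hence the top of the spectrum on the mean-zero subspace is at least $0$. So the spectral subspace for $(-1/3,1]$ is nonzero, and any unit $f$ in it has $\sigma_P^2(f)\geq 1/2$. The partial-sum integrand is bounded by $2$ on the negative part, so dominated convergence applies there. This settles the degenerate case once $c\leq 1/(8C)$.

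Alternatively, argue as the paper does: $\Gap\leq\min\{1,2G\}$ gives $2/\Gap-1\geq 1/\Gap\geq 1/(2G)$ in one step, with no case split.
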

	
	\begin{proof}
		Assertion (i) follows directly from \cref{cor:asymptotic-variance}.
		
		For (ii), recall the spectral identity
		\begin{equation}\label{eq:worst-case-variance-gap}
			\sup_{\substack{g\in L^2(\pi)\\\Var_\pi(g)>0}}
			\frac{\sigma_P^2(g)}{\Var_\pi(g)}
			=\frac{2}{\Gap(P)}-1
		\end{equation}
		for a reversible kernel~$P$ with a unique invariant distribution~$\pi$,
		where the right side is interpreted as $+\infty$ when
		$\Gap(P)=0$; see
		\citet{KipnisVaradhan1986,RobertsRosenthal2008VarianceBounding}.
		
		Fix $h>0$. By \cref{prop:minimax-fixed-step-ceiling}, there are a
		universal constant $a>0$ and a constant $C_1>0$ depending only on
		$\kappa_0$ such that one can choose a smooth admissible potential
		$U$ with
		\[
		\Gap(P_{U,h})
		\leq 2C_1\max\left\{
		\frac{\log(\kappa d)}{\kappa d},\ e^{-ad}
		\right\}.
		\]
		The factor $2$ allows for the possibility that the infimum over
		$U$ in that proposition is not attained.
		
		We also have $\Gap(P_{U,h})\leq1$. Taking $f$ in \eqref{eq:gap} to be the indicator of a measurable set $A$ satisfying
		$0<\pi_U(A)<1$ gives
		\[
		\Gap(P_{U,h})
		\leq
		\frac{\int_A P_{U,h}(x,A^c)\,\pi_U(\df x)}
		{\pi_U(A)[1-\pi_U(A)]}
		\leq\frac1{1-\pi_U(A)}.
		\]
		Since $\pi_U$ has a density, such sets can have arbitrarily small
		positive probability. Letting $\pi_U(A)$ decrease to zero proves
		the claim.
		
		Consequently, \eqref{eq:worst-case-variance-gap} yields
		\[
		\begin{aligned}
			\sup_{\substack{g\in L^2(\pi_U)\\\Var_{\pi_U}(g)>0}}
			\frac{\sigma_{P_{U,h}}^2(g)}{\Var_{\pi_U}(g)}
			&=\frac{2}{\Gap(P_{U,h})}-1\\
			&\geq\frac1{\Gap(P_{U,h})}\\
			&\geq\frac1{2C_1}
			\min\left\{
			\frac{\kappa d}{\log(\kappa d)},\ e^{ad}
			\right\}.
		\end{aligned}
		\]
		By the definition of the supremum, there is a function~$f$ with
		variance ratio at least
		\[
		\frac1{4C_1}\min\left\{
		\frac{\kappa d}{\log(\kappa d)},\ e^{ad}
		\right\}.
		\]
		Rescaling this function to have stationary variance one proves
		(ii), with $c=1/(4C_1)$.
	\end{proof}
	
	For each fixed $\kappa>1$, the upper bound in (i) is of order
	$\sqrt{d\log d}$, whereas the lower bound in (ii) is of order
	$d/\log d$ as $d\to\infty$. The target and test function in (ii)
	may depend on $h$. Thus this is a comparison of uniform
	worst-case guarantees, rather than a comparison for every fixed
	target and test function.

	The next corollary extends the variance guarantee to a bound on the
	finite-sample mean-square error for the lazy randomized-step MALA. We use the half-lazy kernel
	so that its right gap is also its absolute spectral gap.
	
	\begin{corollary}\label{cor:nonstationary-variance}
		Under the notation of \cref{thm:main}, let
		$H=c/(L\sqrt{d\,p_\star})$, where $c>0$.
		Let $(X_k)_{k\geq0}$ have kernel
		$(I+\overline P_H)/2$ and initial distribution
		$\mu\ll\pi$, with $\df\mu/\df\pi\in L^2(\pi)$.
		For $f\in L^4(\pi)$, define
		\[
		f_0=f-\int f\,\df\pi,
		\qquad
		\overline f_n=\frac1n\sum_{k=0}^{n-1}f(X_k).
		\]
		Then, there is a universal constant $c_0 \in (0,\infty)$ such that, for every integer $n\geq1$,
		\begin{equation}\label{eq:nonstationary-mse}
			\begin{aligned}
				\E_\mu\left[
				\left(\overline f_n-\int f\,\df\pi\right)^2
				\right]
				\leq&
				\frac{4/\Gap(\overline P_H)-1}{n} \Var_{\pi}(f)+
				\frac{512}{n^2\Gap(\overline P_H)^2}
				\norm{\frac{\df\mu}{\df\pi}-1}_{L^2(\pi)}
				\norm{f_0}_{L^4(\pi)}^2 \\
				\leq& \frac{4 \max\{c,c^{-1}\} \kappa \sqrt{d p_{\star}}}{c_0 n} \Var_{\pi}(f) + \\
				&
				\frac{512 \max\{c^2,c^{-2}\} \kappa^2 d p_{\star} }{c_0^2 n^2 }
				\norm{\frac{\df\mu}{\df\pi}-1}_{L^2(\pi)}
				\norm{f_0}_{L^4(\pi)}^2
			\end{aligned}
		\end{equation}
	\end{corollary}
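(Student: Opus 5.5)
We prove \cref{cor:nonstationary-variance} by splitting the nonstationary mean-square error into a stationary variance term and a transient correction. Write $K=(I+\overline P_H)/2$ and $\gamma=\Gap(\overline P_H)/2$, so that $K$ is $\pi$-reversible and positive semidefinite. Since $\overline P_H$ has spectrum in $[-1,1]$, the spectrum of $K$ restricted to the mean-zero subspace $L^2_0(\pi)$ lies in $[0,1-\gamma]$. Hence the absolute spectral gap of $K$ equals its right gap $\gamma$, and $\norm{K^j g}_{L^2(\pi)}\leq(1-\gamma)^j\norm{g}_{L^2(\pi)}$ for $g\in L^2_0(\pi)$. Put $\rho=\df\mu/\df\pi$. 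Expanding the square gives
\[
\E_\mu\bigl[(\overline f_n-\pi f)^2\bigr]
=\E_\pi\bigl[(\overline f_n-\pi f)^2\bigr]
+\int(\rho-1)\,\Phi_n\,\df\pi,
\qquad
\Phi_n(x)=\E_x\Bigl[\Bigl(\tfrac1n\textstyle\sum_{k<n}f_0(X_k)\Bigr)^2\Bigr].
\]

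The first term is the stationary variance. I would write
\[
n\Var_\pi(\overline f_n)=\norm{f_0}^2+\frac2n\sum_{0\leq j<k<n}\ip{f_0}{K^{k-j}f_0}
\leq\norm{f_0}^2+2\sum_{j\geq1}\ip{f_0}{K^jf_0}.
\]
The inequality uses the positivity $\ip{f_0}{K^jf_0}\geq0$, which holds because $K$ is positive semidefinite. The right-hand side equals $\ip{f_0}{(2(I-K)^{-1}-I)f_0}$. By the spectral bound it is at most $(2/\gamma-1)\Var_\pi(f)=(4/\Gap(\overline P_H)-1)\Var_\pi(f)$, and this is exactly the first term of \eqref{eq:nonstationary-mse}.

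The second term is the main work, and I would follow the approach of \citet{Paulin2015} and \citet{Rudolf2012}. By Cauchy--Schwarz,
\[
\Bigl|\int(\rho-1)\Phi_n\,\df\pi\Bigr|
=\Bigl|\int(\rho-1)\bigl(\Phi_n-\pi\Phi_n\bigr)\df\pi\Bigr|
\leq\norm{\rho-1}_{L^2(\pi)}\,\norm{\Phi_n-\pi\Phi_n}_{L^2(\pi)}.
\]
Expanding the square in $\Phi_n$ gives
\[
\Phi_n=n^{-2}\sum_k K^k(f_0^2)+2n^{-2}\sum_{j<k}K^j\bigl(f_0\,K^{k-j}f_0\bigr).
\]
Each summand then has its centred part contracted by $(1-\gamma)^j$. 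For the diagonal terms, $\norm{f_0^2-\pi f_0^2}_{L^2(\pi)}\leq\norm{f_0}_{L^4(\pi)}^2$. For the cross terms, $\norm{f_0K^{\ell}f_0}_{L^2(\pi)}\leq\norm{f_0}_{L^4}\norm{K^\ell f_0}_{L^4}$. This is the delicate point, because $L^4$ contraction is not given by the $L^2$ gap. I would first try Riesz--Thorin interpolation between $L^2$ contraction at rate $(1-\gamma)^\ell$ and $L^\infty$ (or $L^{\infty}$-type) boundedness, which yields $\norm{K^\ell f_0}_{L^4}\leq 2(1-\gamma)^{\ell/2}\norm{f_0}_{L^4}$. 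After centring, the double geometric sums then give
\[
\sum_j(1-\gamma)^j\sum_\ell(1-\gamma)^{\ell/2}\lesssim\gamma^{-2},
\]
so the transient term is bounded by a constant times $n^{-2}\gamma^{-2}\norm{\rho-1}\,\norm{f_0}_{L^4}^2$. Tracking the factors ($\gamma=\Gap/2$, the factor $2$ from the cross terms, and the interpolation constant) should produce the constant $512/\Gap(\overline P_H)^2$. If the exact constant does not come out, I would cite Theorem~3.7 of \citet{Paulin2015}, which states this bound for reversible chains with absolute spectral gap $\gamma$.

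The second inequality of \eqref{eq:nonstationary-mse} follows by substituting the bound $\Gap(\overline P_H)\geq c_0\min\{c,c^{-1}\}/(\kappa\sqrt{dp_\star})$ from \cref{cor:sqrt-d-endpoint}, applied with $H=c/(L\sqrt{dp_\star})$, and using $1/\min\{c,c^{-1}\}=\max\{c,c^{-1}\}$ together with $4/\Gap-1\leq4/\Gap$.
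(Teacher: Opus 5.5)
Your proposal is correct and follows the paper's route. You isolate the stationary mean-square error and bound it via $\ip{f_0}{K^jf_0}\geq0$ and the geometric series in $1-\Gap(\overline P_H)/2$. You then control the transient term by $\norm{\df\mu/\df\pi-1}_{L^2(\pi)}\norm{f_0}_{L^4(\pi)}^2/(n^2\Gap(\overline P_H)^2)$ and finish with \cref{cor:sqrt-d-endpoint}. The paper quotes that transient bound from \citet[Theorem~3.41]{Rudolf2012} with $p=4$, whereas you re-derive it by essentially the argument behind that theorem (Cauchy--Schwarz against $\rho-1$, $L^2$ contraction, Riesz--Thorin for the $L^4$ decay). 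Your own bookkeeping already closes the argument without the Paulin fallback. Riesz--Thorin even gives $\sqrt2$ in place of your factor $2$, and with your $2$ the diagonal and cross sums still give $\norm{\Phi_n-\pi\Phi_n}_{L^2(\pi)}\leq 9\gamma^{-2}n^{-2}\norm{f_0}_{L^4(\pi)}^2=36\,n^{-2}\Gap(\overline P_H)^{-2}\norm{f_0}_{L^4(\pi)}^2$, which is below the stated $512$.
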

	
	\begin{proof}
		The kernel $(I+\overline P_H)/2$ is reversible and
		positive semidefinite, with operator norm
		\[
		1-\frac{\Gap(\overline P_H)}2<1
		\]
		on the mean-zero subspace of $L^2(\pi)$.
		Apply \citet[Theorem~3.41]{Rudolf2012} with
		$p=4$, no burn-in, and the centered function $f_0$.
		This gives
		\begin{equation} \label{ine:rudolf}
			\begin{aligned}
				\E_\mu\left[
				\left(\overline f_n-\int f\,\df\pi\right)^2
				\right]
				&\leq
				\E_\pi\left[
				\left(\overline f_n-\int f\,\df\pi\right)^2
				\right]\\
				&\quad+
				\frac{512}{n^2\Gap(\overline P_H)^2}
				\norm{\frac{\df\mu}{\df\pi}-1}_{L^2(\pi)}
				\norm{f_0}_{L^4(\pi)}^2.
			\end{aligned}
		\end{equation}
		For the stationary chain,
		\[
		0\leq
		\ip{f_0}{
			\left(\frac{I+\overline P_H}{2}\right)^k f_0
		}_{L^2(\pi)}
		\leq
		\left(1-\frac{\Gap(\overline P_H)}2\right)^k
		\Var_\pi(f).
		\]
		Consequently,
		\[
		\begin{aligned}
			\E_\pi\left[
			\left(\overline f_n-\int f\,\df\pi\right)^2
			\right]
			&=
			\frac1{n^2}\left[
			n\Var_\pi(f)+
			2\sum_{k=1}^{n-1}(n-k)
			\ip{f_0}{
				\left(\frac{I+\overline P_H}{2}\right)^k f_0
			}_{L^2(\pi)}
			\right]\\
			&\leq
			\frac{\Var_\pi(f)}n
			\left[
			1+2\sum_{k=1}^{\infty}
			\left(1-\frac{\Gap(\overline P_H)}2\right)^k
			\right]\\
			&=
			\frac{4/\Gap(\overline P_H)-1}{n}\Var_\pi(f).
		\end{aligned}
		\]
		This along with \eqref{ine:rudolf} proves the first inequality in \eqref{eq:nonstationary-mse}.
		The final inequality follows from \cref{cor:sqrt-d-endpoint}.
		
	\end{proof}

	\section{Ingredients of the Proof of \cref{thm:main}} \label{sec:ingredients}
	
	We now describe the main ingredients of the proof of \cref{thm:main}.

	For \(t>0\), define the MALA kernel averaged over the upper dyadic half of
	\((0,t)\) by
	\begin{equation*}
		\cK_t=\frac2t\int_{t/2}^tP_h\,\df h.
	\end{equation*}
	The uniform mixture $\overline P_H$ contains this component with weight \(t/(2H)\)
	whenever \(t\leq H\).
	The following elementary relation holds.
	
	\begin{lemma} \label{lem:Kt}
		Let \(0<t_J<\cdots<t_0\leq H\), and suppose that the intervals
		$(t_j/2,t_j)$, $j\in\{0,\dots,J\}$, are pairwise disjoint.
		Then, for $f \in L^2(\pi)$,
		\[
		\cE_{\overline P_H}(f,f) \geq \sum_{j=0}^J \frac{t_j}{2H} \cE_{\cK_{t_j}}(f,f).
		\]
	\end{lemma}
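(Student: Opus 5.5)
The plan is to use the fact that the Dirichlet form is linear in the kernel and that each $\cE_{P_h}(f,f)$ is nonnegative. First I will justify linearity for the mixture. Since $\overline P_H(x,\df y)=H^{-1}\int_0^H P_h(x,\df y)\,\df h$, Tonelli's theorem applies to the nonnegative integrand $[f(x)-f(y)]^2$. It gives
\[
\cE_{\overline P_H}(f,f)=\frac1H\int_0^H \cE_{P_h}(f,f)\,\df h .
\]
The same computation applied to $\cK_t=(2/t)\int_{t/2}^t P_h\,\df h$ gives
\[
\cE_{\cK_t}(f,f)=\frac2t\int_{t/2}^t\cE_{P_h}(f,f)\,\df h .
\]
The only measurability point here is that $h\mapsto P_h(x,A)$ is jointly measurable in $(h,x)$. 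This holds because the proposal density and the acceptance ratio are continuous in $h$ for $h>0$ and measurable in $(x,y)$. This is implicit in the paper's definition of $\overline P_H$.

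Next I combine the pieces. Each $\cE_{P_h}(f,f)\geq 0$, and by hypothesis the intervals $(t_j/2,t_j)$ are pairwise disjoint subsets of $(0,H]$. Therefore
\[
\frac1H\int_0^H \cE_{P_h}(f,f)\,\df h\;\geq\;\sum_{j=0}^J\frac1H\int_{t_j/2}^{t_j}\cE_{P_h}(f,f)\,\df h\;=\;\sum_{j=0}^J\frac{t_j}{2H}\,\cE_{\cK_{t_j}}(f,f).
\]
This is exactly the claimed inequality.

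Finiteness is not a concern. Every $\cE_{P}(f,f)\leq 2\norm{f}_{L^2(\pi)}^2$ for $f\in L^2(\pi)$ by $(a-b)^2\leq 2a^2+2b^2$ and stationarity. Hence all quantities in the inequality are finite, and the manipulations are legitimate.

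I expect no real obstacle. The only step needing any care is the Fubini/Tonelli interchange, and it is harmless because the integrand is nonnegative.
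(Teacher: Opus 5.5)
Your proposal is correct and follows the paper's proof: Tonelli expresses $\cE_{\overline P_H}(f,f)$ as $\frac1H\int_0^H\cE_{P_h}(f,f)\,\df h$, nonnegativity and disjointness restrict the integral to the intervals $(t_j/2,t_j)$, and the definition of $\cK_{t_j}$ finishes. Your remarks on joint measurability in $(h,x)$ and on finiteness of the Dirichlet forms only make explicit what the paper leaves implicit.
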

	
	\begin{proof}
		By nonnegativity and Tonelli's theorem,
		\[
		\cE_{\overline P_H}(f,f)
		=\frac1H\int_0^H\cE_{P_h}(f,f)\,\df h
		\geq\sum_{j=0}^J\frac1H
		\int_{t_j/2}^{t_j}\cE_{P_h}(f,f)\,\df h.
		\]
		The definition of $\cK_{t_j}$ gives the claim.
	\end{proof}
	
	We will analyze $\cK_t$ at different scale parameters~$t$, and aggregate results along an appropriate sequence of scale levels in a way that is more sophisticated than \cref{lem:Kt}.
	
	To be more specific, in \cref{prop:stationary-rejection}, we bound the fixed-step MALA rejection rate, and use
	this bound to establish a close-coupling, or one-step overlap, result for $\cK_t$ in \cref{prop:overlap}.
	The curvature assumption on~$U$ yields the isoperimetric inequality in
	\cref{prop:separated}.  We then combine \cref{prop:overlap,prop:separated} in
	\cref{prop:flow} to bound the one-step flow for $\cK_t$ out of a given set~$S$.
	Importantly, for each set~$S$, the flow out of~$S$ is controlled for one particular $\cK_t$ where the scale parameter~$t$ depends on $\pi(S)$.
	The general component-aggregation bound in
	\cref{thm:aggregation} combines the one-step flows at various scales, yielding \cref{thm:main}.
	
	\subsection{Close coupling}
	
	The next proposition, proved in \cref{app:rejection-overlap}, shows that
	$\cK_t(x,\cdot)$ and $\cK_t(y,\cdot)$ are close in total variation whenever
	$\norm{x-y}\leq\sqrt{t}/16$.  For the larger admissible values of $t$, this
	conclusion is guaranteed only when $x$ and $y$ lie outside a small exceptional
	set.
	
	\begin{proposition} \label{prop:overlap}
		There are universal constants \(c_{\mathrm r},C_{\mathrm r}>0\) such
		that, for every real \(p\geq 1\) and every $t$ satisfying
		\begin{equation}\label{eq:overlap-step-condition}
			0<t\leq
			\frac{c_{\mathrm r}}{L\sqrt{p(d+p)}},
		\end{equation}
		there is a measurable set \(G_{t,p}\subseteq\R^d\) satisfying 
		\begin{equation}\label{eq:overlap-exceptional}
			\pi(G_{t,p}^c)
			\leq
			\left[C_{\mathrm r}Lt\sqrt{p(d+p)}\right]^p,
		\end{equation}
		and 
		\begin{equation*}
			x,y\in G_{t,p},\quad
			\norm{x-y}\leq\frac{\sqrt{t}}{16}
			\quad\Longrightarrow\quad
			\TV(\cK_t(x,\cdot),\cK_t(y,\cdot))\leq\frac34.
		\end{equation*}
		Moreover, if $t' \in (0,1/(2Ld)]$, then
		\[
		x,y\in \R^d,\quad
		\norm{x-y}\leq\frac{\sqrt{ t'}}{16}
		\quad\Longrightarrow\quad
		\TV(\cK_{t'}(x,\cdot),\cK_{t'}(y,\cdot))\leq\frac34.
		\]
	\end{proposition}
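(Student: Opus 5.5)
We split $\TV(\cK_t(x,\cdot),\cK_t(y,\cdot))$ into a rejection part and a proposal part. Since $\cK_t$ is an average over $h\in(t/2,t)$, convexity of total variation gives
\[
\TV(\cK_t(x,\cdot),\cK_t(y,\cdot))\leq\frac2t\int_{t/2}^t\TV(P_h(x,\cdot),P_h(y,\cdot))\,\df h .
\]
For each $h$ we use the standard bound
\[
\TV(P_h(x,\cdot),P_h(y,\cdot))\leq r_h(x)+r_h(y)+\TV(Q_h(x,\cdot),Q_h(y,\cdot)),
\]
where $r_h(x)$ is the rejection probability at $x$.

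The proposal term is an explicit Gaussian computation. The laws $N(x-h\nabla U(x),2hI)$ and $N(y-h\nabla U(y),2hI)$ have means differing by at most $(1+hL)\norm{x-y}$. Pinsker's inequality, or the exact formula for Gaussians with common covariance, bounds their total variation by $\norm{\text{mean difference}}/(2\sqrt{h})$ up to a constant. With $h\geq t/2$, $\norm{x-y}\leq\sqrt t/16$ and $hL\leq 1$, this gives $\TV\leq (2/16)\cdot\sqrt2/2\cdot(\text{small constant})$, which is well under $1/4$. So it suffices to make the averaged rejection contributions at $x$ and $y$ each at most about $1/4$.

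For the rejection part, define
\[
\bar r_t(x)=\frac2t\int_{t/2}^t r_h(x)\,\df h
\]
and set $G_{t,p}=\{x:\bar r_t(x)\leq 1/4\}$. This set is measurable by joint measurability of $r_h(x)$. By Markov's inequality and then Minkowski's integral inequality,
\[
\pi(G_{t,p}^c)\leq 4^p\norm{\bar r_t}_{L^p(\pi)}^p\leq 4^p\Bigl(\sup_{h\leq t}\norm{r_h}_{L^p(\pi)}\Bigr)^p .
\]
We then invoke the $L^p$ rejection estimate \cref{prop:stationary-rejection}: for $h<c/(L\sqrt{p(d+p)})$, $\norm{r_h}_{L^p(\pi)}\leq CLh\sqrt{p(d+p)}$. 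Taking $c_{\mathrm r}\leq c$ and $C_{\mathrm r}=4C$ gives \eqref{eq:overlap-exceptional}. On $G_{t,p}$ the total is at most $1/4+1/4+(\text{Gaussian term})\leq 3/4$, provided the constants are arranged so that the Gaussian term is at most $1/4$.

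The second statement, for $t'\leq 1/(2Ld)$, requires a pointwise rather than averaged rejection bound. We would prove $\sup_x r_h(x)\leq 1/4$ for all $h\leq 1/(2Ld)$. The acceptance log-ratio is
\[
U(x)-U(y)+\frac{\norm{y-x+h\nabla U(x)}^2-\norm{x-y+h\nabla U(y)}^2}{4h}.
\]
Using the Lipschitz gradient, this can be written as $-\frac12\ip{\nabla U(x)+\nabla U(y)}{y-x}+U(x)-U(y)$ plus terms of order $h\norm{\nabla U(y)-\nabla U(x)}\,\norm{\nabla U}$. The trapezoid-rule error of $U$ along the segment is at most $L\norm{y-x}^2/2$ in absolute value. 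Expanding $y-x=-h\nabla U(x)+\sqrt{2h}Z$, the potentially large gradient terms must cancel to leading order; what remains is controlled by $Lh\norm{Z}^2\sim Lhd\leq1/2$, together with cross terms like $h^{3/2}L\ip{\nabla U(x)}{Z}$.

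This last step is the main obstacle: obtaining a bound uniform in $x$, where $\norm{\nabla U(x)}$ is unbounded. Our first attempt would be to check that every term involving $\nabla U(x)$ carries a factor of $Lh$ relative to a centered Gaussian quantity. We would then bound the expected negative part $\E[(\log\text{ratio})_-]$ by a constant times $Lhd+\sqrt{Lh}$-type terms, possibly via a standard bound of the type in \citet{DwivediEtAl2019}. If the cancellation fails for large $\norm{\nabla U(x)}$, we would instead cite an appropriate pointwise small-step MALA acceptance bound from the literature, or the paper's \cref{app:small-fixed-step-gap}.
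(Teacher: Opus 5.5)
Your treatment of the first assertion is correct and is essentially the paper's argument. You average the rejection probability over $[t/2,t]$, take $G_{t,p}$ to be a sublevel set of $\overline r_t$, and combine Markov's inequality, Minkowski's integral inequality and \cref{prop:stationary-rejection}. Most differences are cosmetic: you use threshold $1/4$ instead of $1/3$, and you need $c_{\mathrm r}\leq1$ so that $hL\leq1$. The one substantive difference is that you bound the gap between proposal means by $(1+hL)\norm{x-y}$ using only the Lipschitz gradient. The paper instead uses cocoercivity to make $x\mapsto x-h\nabla U(x)$ nonexpansive. Your version shows this part needs no convexity at all.

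The second assertion, however, is not proved, and the route you sketch cannot prove it.

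\emph{Why your route fails.} You need $\sup_x r_h(x)\leq1/4$ for $h\leq1/(2Ld)$. Your estimates are two-sided bounds that use only the Lipschitz gradient: the trapezoid error is bounded by $L\norm{y-x}^2/2$, and the remaining terms by quantities of order $h\norm{\nabla U(y)-\nabla U(x)}\,\norm{\nabla U(x)}$. Since $y-x=-h\nabla U(x)+\sqrt{2h}Z$, both are of size $Lh^2\norm{\nabla U(x)}^2$, which is unbounded in $x$. So no step size depending only on $L$ and $d$ makes them uniformly small. The cancellation you hope for depends on the sign of the curvature. If $U$ is quadratic with Hessian $H$ near the segment $[x,y]$, the log-ratio equals $-\frac h2\ip{\nabla U(x)}{H(y-x)}-\frac h4\norm{H(y-x)}^2$. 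Its leading term is $\frac{h^2}{2}\ip{\nabla U(x)}{H\nabla U(x)}$, which is harmless only when $H\succeq0$. So convexity must be used, and one-sidedly.

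\emph{The missing step.} With $\Delta=\nabla U(Y)-\nabla U(x)$, rewrite the log-ratio exactly as
\[
\log\frac{e^{-U(Y)}q_h(Y,x)}{e^{-U(x)}q_h(x,Y)}
=U(x)-U(Y)-\ip{\nabla U(Y)}{x-Y}-\frac h4\norm{\Delta}^2-\sqrt{\frac h2}\,\ip{Z}{\Delta},
\]
and combine this with the convex $L$-smooth (Baillon--Haddad) bound $U(x)-U(Y)-\ip{\nabla U(Y)}{x-Y}\geq\norm{\Delta}^2/(2L)$.
\begin{itemize}
\item For $h\leq1/L$ this gives the lower bound $\norm{\Delta}^2/(4L)-\sqrt{h/2}\,\norm{Z}\norm{\Delta}\geq-Lh\norm{Z}^2/2$. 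Completing the square in $\norm{\Delta}$ removes every trace of $x$.
\item Hence $\alpha_h(x,Y)\geq e^{-Lh\norm{Z}^2/2}$, so $1-r_h(x)\geq(1+Lh)^{-d/2}\geq e^{-Lhd/2}\geq e^{-1/4}$.
\item Therefore $r_h(x)\leq1-e^{-1/4}<1/4$ for every $x$ when $h\leq1/(2Ld)$. Your decomposition then gives $\TV\leq2(1-e^{-1/4})+1/16<3/4$.
\end{itemize}

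Your fallbacks do not close the gap. Acceptance bounds valid only on a high-probability region do not give an all-$x$ statement. Invoking \cref{prop:small-fixed-step-gap} is circular, because its proof uses exactly this step of the proof of \cref{prop:overlap}.

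A minor slip: in your expansion, the term $\frac12\ip{\nabla U(x)+\nabla U(y)}{y-x}$ should carry a plus sign.
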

	
	
	\subsection{Isoperimetric inequality}\label{ssec:isoperimetry}
	
	A strongly log-concave distribution satisfies the Gaussian enlargement
	inequality below; see \citet[Corollary~2.2 and equations~(2.10)--(2.12)]{BakryLedoux1996}.
	It remains valid under the first-order assumptions
	\eqref{eq:first-order-assumptions}. To see this without assuming second
	differentiability, note that \(U(x)-m\norm{x}^2/2\) is convex.
	Caffarelli's contraction theorem \citep{Caffarelli2000}, in the nonsmooth
	formulation of \citet[Theorem~1.1, Lemma 3.3, and the conclusion of Section~3]{KimMilman2012},
	therefore gives a \(1\)-Lipschitz map \(T\) sending
	\(N_d(0,m^{-1}I_d)\) to \(\pi\).
	For each Borel set \(A\) and \(r>0\), the inclusion
	\((T^{-1}(A))^r\subseteq T^{-1}(A^r)\) transfers the Gaussian
	isoperimetric inequality to \(\pi\). Consequently,
	\begin{equation}\label{eq:bakry-ledoux-enlargement}
		\pi(A^r)
		\geq
		\Phi\bigl(\Phi^{-1}(\pi(A))+\sqrt{m}\,r\bigr),
	\end{equation}
	where, for \(r>0\),
	\(A^r=\{x:\operatorname{dist}(x,A)<r\}\), and \(\Phi\) is the standard
	Gaussian distribution function. The cases \(\pi(A)=0,1\) are understood by
	continuity. The same bound holds for sets measurable in the completion of
	\(\pi\), by choosing a Borel subset of the same measure.
	The following three-set form of the isoperimetric inequality is proved
	in \cref{app:gaussian}.
	
	\begin{proposition}\label{prop:separated}
		Let \(A,B,C\) be a measurable partition of \(\R^d\), let \(r\geq0\), and suppose
		\(\operatorname{dist}(A,B)\geq r\).  If
		\(0<q=\min\{\pi(A),\pi(B)\}\leq1/2\), then
		\begin{equation}\label{eq:separated}
			\pi(C)
			\geq
			\frac{q}{4}
			\min\left\{1,r\sqrt{m\log(1/q)}\right\}.
		\end{equation}
	\end{proposition}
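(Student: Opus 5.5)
Without loss of generality let $\pi(A)\leq\pi(B)$, so $q=\pi(A)\leq 1/2$. The plan is to apply the enlargement inequality \eqref{eq:bakry-ledoux-enlargement} to the set $A$. Since $\operatorname{dist}(A,B)\geq r$, the open enlargement $A^r$ does not meet $B$, so $A^r\subseteq A\cup C$. This gives
\[
\pi(C)\geq \pi(A^r)-\pi(A)\geq \Phi\bigl(\Phi^{-1}(q)+\sqrt m\,r\bigr)-q.
\]
When $r=0$ the claim is trivial, so we may assume $r>0$. Write $a=-\Phi^{-1}(q)\geq0$ and $s=\sqrt m\,r$. The problem is now one-dimensional: show that $\Phi(-a+s)-\Phi(-a)\geq \frac q4\min\{1,s\sqrt{\log(1/q)}\}$.

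\emph{Case 1: $s\sqrt{\log(1/q)}\geq1$, or more generally $s$ comparable to $1/\max\{a,1\}$.} By monotonicity in $s$, it suffices to treat a fixed threshold value $s_0=\min\{s,\,1/\max\{a,1\}\}$. On the interval $[-a,-a+s_0]$ the Gaussian density $\varphi$ is at least $\varphi(a)e^{-1/2}$ or so, because $\varphi(-a+u)/\varphi(a)=e^{au-u^2/2}\geq 1$ for $0\leq u\leq \min\{2a,s_0\}$. When $a$ is small, $\varphi$ is simply bounded below on a bounded interval. Hence the increment is at least $c\,s_0\varphi(a)$.

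\emph{Relating $\varphi(a)$ to $q$.} We then use the Mills-ratio bound $\Phi(-a)\leq \varphi(a)/a$ for $a>0$, i.e.\ $\varphi(a)\geq aq$. For $a$ bounded (that is, $q$ near $1/2$) we instead use $\varphi(a)\geq c'$ directly. We also need the standard comparison $a\asymp\sqrt{\log(1/q)}$ for small $q$; precisely, $a\geq\sqrt{\log(1/q)}/C$ once $q\leq q_0$, which follows from $\Phi(-a)\geq \varphi(a)\,a/(1+a^2)$. Combining these gives increment $\geq c\, q\,\min\{1,s\max\{a,1\}\}\geq \frac q4\min\{1,s\sqrt{\log(1/q)}\}$, after checking that $\max\{a,1\}\gtrsim\sqrt{\log(1/q)}$ for all $q\leq1/2$. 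This holds since at $q=1/2$ we have $\sqrt{\log 2}<1$.

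\textbf{Main obstacle.} The main difficulty is getting the explicit constant $1/4$ rather than an unspecified $c$. I would handle it by splitting into $a\geq1$ and $a<1$. For $a\geq1$, use the exact bound $\int_{-a}^{-a+u}\varphi\geq u\varphi(a)$ for $u\leq 2a$ together with $\varphi(a)\geq aq$. Then use $a^2\geq \log(1/q)-\log(\text{const}\cdot a)$; this step needs care, and possibly a direct bound $\Phi(-a)\geq e^{-a^2}\cdot(\text{const})$, which gives $a\geq\sqrt{\log(1/q)}/\sqrt2$-type control. For $a<1$, use $\varphi\geq\varphi(1)$ on $[-1,0]$ and $q\leq1/2$. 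Throughout, saturation at $\min\{1,\cdot\}$ is handled by truncating $s$, and the constants $\varphi(1)\approx0.24$ and $\sqrt2$ should leave enough room to reach $1/4$.
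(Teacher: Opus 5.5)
Your proposal follows the paper's proof: the same reduction via $A^r\subseteq A\cup C$ and the enlargement inequality to $\Phi(-a+s)-\Phi(-a)\geq\frac q4\min\{1,s\sqrt{\log(1/q)}\}$. The paper proves that bound by integrating $\varphi(a-v)\geq e^{-1/2}\varphi(a)$ over $v\in[0,\min\{s,1\}]$ and using the Mills bounds $\varphi(a)\geq q(1+a)/2$ and $\log(1/q)\leq(1+a)^2$, which hold for all $a\geq0$ and so avoid your split into $a\geq1$ and $a<1$. Your split also goes through with room to spare, provided that for $a<1$ you use that the window $[-a,-a+\min\{s,1\}]$ lies in $[-1,1]$ (not $[-1,0]$), where $\varphi\geq\varphi(1)$ still holds.
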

	
	
	\subsection{One-step flow}
	
	For a $\pi$-reversible Markov kernel \(K\), define the one-step flow
	\[
	\mathcal{J}_K(S,S^c)=\int_S K(x,S^c)\pi(\df x).
	\]

	Combining \cref{prop:overlap} and \cref{prop:separated}, we establish the following result concerning $\cK_t$ in
	\cref{app:defective}.
	
	\begin{proposition} \label{prop:flow}
		One can find universal constants $b_0 \in (0,1/2]$ and $A_0 \in [2,\infty)$ such that the following holds.
		Let \(p\geq \widetilde p_\star:=A_0p_\star\), where $p_\star$ is defined in \cref{thm:main}.
		Let
		\(\theta\in(0,1]\),  and
		\[
		t=\frac{\theta b_0}{L\sqrt{p(d+p)}}.
		\]
		Then, for every measurable set $S\subseteq\R^d$ such that
		$e^{-p/2}\leq\pi(S)\leq1/2$,
		\[
		\mathcal{J}_{\cK_t}(S,S^c) \geq 2^{-12}\pi(S)
		\sqrt{mt\log[1/\pi(S)]},
		\]
		where $mt\log[1/\pi(S)]\leq1$.
		
		Moreover, for $t'\in(0,1/(2Ld)]$ and every measurable set
		$S\subseteq\R^d$ such that $0<\pi(S)\leq1/2$,
		\[
		\mathcal{J}_{\cK_{t'}}(S,S^c) \geq 2^{-12}\pi(S)
		\min\left\{1,\sqrt{mt'\log[1/\pi(S)]}\right\}.
		\]
	\end{proposition}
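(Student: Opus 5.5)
We follow the standard conductance argument: combine the close-coupling estimate of \cref{prop:overlap} with the three-set isoperimetric inequality of \cref{prop:separated}. Fix $S$ with $\pi(S)=s\leq 1/2$ and write $K=\cK_t$. Define
\[
S_1=\{x\in S\cap G_{t,p}: K(x,S^c)<1/8\},
\qquad
S_2=\{y\in S^c\cap G_{t,p}: K(y,S)<1/8\},
\]
and let $C$ be the remaining set, which consists of $(G_{t,p})^c$ together with the points of $G_{t,p}$ whose exit probability is at least $1/8$.

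\emph{Separation.} If $x\in S_1$ and $y\in S_2$, then $\TV(K(x,\cdot),K(y,\cdot))\geq K(x,S)-K(y,S)>7/8-1/8=3/4$. Hence \cref{prop:overlap} forces $\norm{x-y}>\sqrt t/16$, so $\operatorname{dist}(S_1,S_2)\geq\sqrt t/16$. By reversibility,
\[
\mathcal J_K(S,S^c)=\tfrac12\Bigl[\int_S K(x,S^c)\,\pi(\df x)+\int_{S^c}K(y,S)\,\pi(\df y)\Bigr]\geq \tfrac1{16}\,\pi\bigl(C\cap G_{t,p}\bigr).
\]

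\emph{Case analysis.} If $\pi(S_1)\leq s/2$, the mass of $S\setminus S_1$ is at least $s/2$. Removing $\pi(G^c_{t,p})$ from it, and using that $\pi(G^c_{t,p})$ will be shown to be at most $s/4$, the points of $S\cap G_{t,p}$ with exit probability at least $1/8$ have mass at least $s/4$. This gives flow $\gtrsim s$, which dominates the target since $mt\log(1/s)\leq1$. The same argument applies if $\pi(S_2)\leq s/2$. Otherwise $q=\min\{\pi(S_1),\pi(S_2)\}\in[s/2,1/2]$, and \cref{prop:separated} with $r=\sqrt t/16$ gives
\[
\pi(C)\geq \frac q4\min\Bigl\{1,\tfrac1{16}\sqrt{mt\log(1/q)}\Bigr\}.
\]
Since $q\geq s/2$ and $q\log(1/q)$ is increasing on small $q$, this is $\gtrsim s\sqrt{mt\log(1/s)}$. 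Near $s=1/2$ one uses $\log(1/q)\geq\log 2$ and $\log(1/s)\leq\log 2\cdot O(1)$ to compare. We then subtract $\pi(G^c_{t,p})$ to get $\pi(C\cap G_{t,p})$, keeping track of constants to land at $2^{-12}$.

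\emph{Main obstacle: the exceptional set.} We need $\pi(G^c_{t,p})\leq [C_{\mathrm r}\theta b_0]^p\leq e^{-p}\cdot(\text{small})$, taking $b_0\leq 1/(e^2C_{\mathrm r})$ and noting that $t$ satisfies \eqref{eq:overlap-step-condition} once $b_0\leq c_{\mathrm r}$. Then $\pi(G^c_{t,p})\leq e^{-2p}\leq s\,e^{-3p/2}$, using $s\geq e^{-p/2}$. This must be dominated by $s\sqrt{mt\log(1/s)}/2^{12}$, which can be as small as $s\sqrt{mt\log2}$. Since
\[
mt=\frac{\theta b_0}{\kappa\sqrt{p(d+p)}}\geq\frac{\theta b_0}{\kappa(d+p)},
\]
we need $e^{-3p/2}\ll \sqrt{\theta b_0}/\sqrt{\kappa(d+p)}$. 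The hypothesis $p\geq A_0p_\star=A_0(1+\log d+\log\kappa)$ handles the factors $\kappa$ and $d$, so $e^{-p}\leq (\kappa d)^{-A_0}e^{-A_0}$.

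The real difficulty is $\theta$: $\theta$ is arbitrary in $(0,1]$, and the lower bound scales like $\sqrt\theta$. The fix is that the exceptional mass also scales with $\theta$, namely $(C_{\mathrm r}\theta b_0)^p\leq\theta\,(C_{\mathrm r}b_0)^p$ because $p\geq1$. Since $\theta\leq\sqrt\theta$, the comparison is uniform in $\theta$. The same bound yields $mt\log(1/s)\leq \theta b_0 p/(2\kappa\sqrt{p(d+p)})\leq b_0/2\leq1$. Finally, for $t'\leq1/(2Ld)$, \cref{prop:overlap} has no exceptional set. The same argument then applies verbatim for every $s\in(0,1/2]$, with the minimum with $1$ retained from \cref{prop:separated}.
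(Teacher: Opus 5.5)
Your proposal is correct and follows the paper's route: you use the same sets $S_1,S_2$, separation via \cref{prop:overlap}, the three-set isoperimetry of \cref{prop:separated}, and the same $\theta^{p}\le\theta\le\sqrt\theta$ observation to make the exceptional-set budget uniform in $\theta$; the paper packages these steps as \cref{lem:defective} and \cref{lem:exceptional-budget}. The differences are cosmetic. The paper argues by contradiction, where assuming small flow forces $\pi(S_1),\pi(S_2)\ge\pi(S)/2$, instead of your explicit case split. Also, your comparison of $\log(1/q)$ with $\log(1/s)$ follows at once from $q\le\pi(S_1)\le s$, so no monotonicity argument near $s=1/2$ is needed.
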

	
	Using \cref{prop:flow}, for a set $S \subset \mathbb{R}^d$ satisfying $0 < \pi(S) < 1/2$, we may bound the average flow $\mathcal{J}_{\cK_t}(S,S^c)/\pi(S)$ for appropriate values of~$t$.
	The smaller $\pi(S)$ is, the smaller~$t$ for which the average flow can be estimated via \cref{prop:flow}.

	\subsection{Aggregation of one-step flows} \label{ssec:aggregation}

	In \cref{sec:main-proof}, we choose disjoint intervals of the form
	\([t_j/2,t_j]\) contained in $[0,H]$, where \((t_0,\ldots,t_J)\) is a decreasing sequence of
	positive numbers.  For a given measurable set~\(S\) such that $0 < \pi(S) \leq 1/2$, the average flow
	\(\mathcal{J}_{\cK_{t_j}}(S,S^c) / \pi(S) \) may be large for some values of~\(j\) but small
	for others.  We therefore assign to each set~\(S\) an appropriate index
	\(j(S)\).
	Then, we establish a lower bound $\phi_{j(S)}$ on
	\(\mathcal{J}_{\cK_{t_{j(S)}}}(S,S^c) / \pi(S)\) using \cref{prop:flow}.  The purpose of this subsection is to provide the tools for
	combining these set-dependent bounds into a lower bound on
	\(\Gap(\overline P_H)\).
	
	Cheeger's inequality \citep{LawlerSokal1988} implies that the spectral gap of a reversible Markov kernel can be estimated by considering the kernel's average flow out of measurable sets.
	It may be tempting to use \cref{lem:Kt} with $f = \mathbf{1}_S$, where $0 < \pi(S) \leq 1/2$, to obtain
	\[
	\frac{\mathcal{J}_{\overline P_H}(S, S^c)}{\pi(S)} \geq \frac{1}{\pi(S)} \sum_{j = 0}^J \frac{t_j}{2H} \mathcal{J}_{\cK_{t_j}}(S, S^c) \geq \frac{t_{j(S)}}{2H} \frac{\mathcal{J}_{\cK_{t_{j(S)}}}(S, S^c)}{\pi(S)} \geq \frac{\min_j t_j \phi_j}{2H}.
	\]
	This gives a lower bound on the conductance of $\overline P_H$.
	It turns out that this direct estimate followed by the usual Cheeger
	inequality loses too much and does not yield the spectral lower bound of order $(\kappa \sqrt{d})^{-1}$ that we desire.
	We carry out the more careful approach below.

	The following lemma carries out this aggregation for a generic mixture of reversible Markov kernels.   
	It is possible to derive the lemma from the general
	Cheeger inequality of \citet{ChenWang2000}.  We give a direct proof in
	\cref{app:fractional}.
	
	\begin{lemma}
		\label{lem:fractional}
		Let \(P,K_1,\ldots,K_N\) be Markov kernels that are reversible with
		respect to a distribution \(\varpi\).  Suppose that there exist constants
		\(\gamma_1,\ldots,\gamma_N>0\) such that
		\[
		\cE_P(f,f)
		\geq
		\sum_{j=1}^N
		\gamma_j\cE_{K_j}(f,f)
		\qquad
		\text{for every }f\in L^2(\varpi).
		\]
		Suppose further that there exist constants \(\beta_1,\ldots,\beta_N\geq0\)
		satisfying
		\[
		\sum_{j=1}^N\frac{\beta_j^2}{\gamma_j}>0
		\]
		such that, for every measurable set \(S\) satisfying
		\(0<\varpi(S)\leq1/2\),
		\[
		\sum_{j=1}^N
		\beta_j \mathcal{J}_{K_j}(S,S^c)
		\geq
		\varpi(S),
		\]
		where
		\[
		\mathcal{J}_{K_j}(S,S^c)
		=
		\int_S K_j(x,S^c)\,\varpi(\df x).
		\]
		Then
		\[
		\Gap(P)
		\geq
		\left(
		2\sum_{j=1}^N\frac{\beta_j^2}{\gamma_j}
		\right)^{-1}.
		\]
	\end{lemma}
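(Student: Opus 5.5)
We prove \cref{lem:fractional} by the standard co-area (level-set) argument behind Cheeger's inequality. The one change is that the Cauchy--Schwarz step is done separately for each component, with weights chosen to match $\beta_j$ and $\gamma_j$. Fix $f\in L^2(\varpi)$ with $\Var_\varpi(f)>0$. Let $a$ be a median of $f$ under $\varpi$, and write $g_+=(f-a)_+$ and $g_-=(f-a)_-$. Since $\Var_\varpi(f)\leq\int(f-a)^2\,\df\varpi=\int g_+^2\,\df\varpi+\int g_-^2\,\df\varpi$, it suffices to treat a nonnegative function $g$ (either $g_+$ or $g_-$) with $\varpi(g>0)\leq1/2$. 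For such $g$ we aim to show
\[
\int g^2\,\df\varpi\leq 2\Bigl(\sum_j\beta_j^2/\gamma_j\Bigr)\sum_j\gamma_j\,\cE_{K_j}(g,g)\Big|_{\text{one-sided part}}.
\]
The two one-sided pieces must then add up to at most $\sum_j\gamma_j\cE_{K_j}(f,f)\leq\cE_P(f,f)$. This holds because $[f(x)-f(y)]^2\geq[g_+(x)-g_+(y)]^2+[g_-(x)-g_-(y)]^2$ pointwise: the positive and negative parts of a real number difference split this way, since $u\mapsto u_+$ and $u\mapsto u_-$ are monotone with disjoint supports.

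For nonnegative $g$ with $\varpi(g>0)\leq1/2$, apply the co-area formula to $g^2$. Let $S_s=\{g^2>s\}$. Each $S_s$ with $\varpi(S_s)>0$ satisfies $0<\varpi(S_s)\leq1/2$, so the hypothesis gives
\[
\int g^2\,\df\varpi=\int_0^\infty\varpi(S_s)\,\df s\leq\sum_j\beta_j\int_0^\infty\mathcal J_{K_j}(S_s,S_s^c)\,\df s .
\]
By reversibility and Fubini, the $j$th integral equals
\[
\int_0^\infty\mathcal J_{K_j}(S_s,S_s^c)\,\df s=\tfrac12\iint\abs{g^2(x)-g^2(y)}\,\varpi(\df x)K_j(x,\df y),
\]
using the symmetric form of the flow. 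Then $\abs{g^2(x)-g^2(y)}=\abs{g(x)-g(y)}\,(g(x)+g(y))$, and Cauchy--Schwarz with respect to $\varpi(\df x)K_j(x,\df y)$ bounds this by
\[
\tfrac12\Bigl(2\cE_{K_j}(g,g)\Bigr)^{1/2}\Bigl(\iint(g(x)+g(y))^2\,\varpi K_j\Bigr)^{1/2}\leq\sqrt{2\,\cE_{K_j}(g,g)}\,\norm{g}_{L^2(\varpi)} .
\]
The last step uses $(g(x)+g(y))^2\leq2g(x)^2+2g(y)^2$ and stationarity.

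Summing over $j$ gives $\norm{g}^2\leq\sqrt2\,\norm{g}\sum_j\beta_j\sqrt{\cE_{K_j}(g,g)}$. A second Cauchy--Schwarz, $\sum_j\beta_j\sqrt{\cE_{K_j}}=\sum_j(\beta_j/\sqrt{\gamma_j})\sqrt{\gamma_j\cE_{K_j}}$, yields
\[
\norm{g}^2\leq 2\Bigl(\sum_j\beta_j^2/\gamma_j\Bigr)\sum_j\gamma_j\cE_{K_j}(g,g).
\]
Apply this to $g_\pm$, add, and use the pointwise splitting inequality together with the hypothesis $\cE_P\geq\sum\gamma_j\cE_{K_j}$. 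This gives $\Var_\varpi(f)\leq2(\sum\beta_j^2/\gamma_j)\,\cE_P(f,f)$, which is the claim.

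The main obstacle is bookkeeping rather than a hard estimate. One must check that the median decomposition lets us use $\cE_{K_j}(g_+)+\cE_{K_j}(g_-)\leq\cE_{K_j}(f)$, and that the constant comes out as exactly $2$ and not $4$. The Cauchy--Schwarz step with $(g(x)+g(y))^2$ has to be done with the symmetric Dirichlet form normalization $\tfrac12\iint$, so the factors of $2$ must be tracked carefully. If the constant came out larger, we would refine that step: write the flow as $\iint_{g(x)>g(y)}(g^2(x)-g^2(y))$ and bound $(g(x)+g(y))^2$ only on that half.
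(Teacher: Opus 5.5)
Your proposal is correct and follows the paper's proof essentially step for step: the median split into $g_\pm$, the layer-cake identity over the level sets of $g^2$, the symmetric coarea identity, Cauchy--Schwarz combined with $(g(x)+g(y))^2\le 2g(x)^2+2g(y)^2$, and the splitting of $[f(x)-f(y)]^2$. Your bookkeeping gives exactly the constant $2$, so the fallback refinement you mention is unnecessary.

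The only difference is cosmetic. You apply Cauchy--Schwarz first within each component and then over $j$, whereas the paper uses a single joint Cauchy--Schwarz over $(j,x,y)$; both give the identical bound. One small correction: the pointwise splitting of $[f(x)-f(y)]^2$ holds because $u\mapsto u_+$ and $u\mapsto u_-$ are monotone in opposite directions, not because their supports are disjoint.
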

	
	The above lemma gives the following convenient hard-assignment
	formulation, which we use to combine the flow bounds in
	\cref{prop:flow}.
	Roughly speaking, \cref{thm:aggregation}
	allows the kernel that supplies the useful one-step flow bound to depend on
	the set under consideration, while keeping track of how often each kernel
	is selected in the mixture. 
	
	\begin{theorem}
		\label{thm:aggregation}
		Let \(P,K_1,\ldots,K_N\) be Markov kernels that are reversible with
		respect to a distribution \(\varpi\).  Suppose
		\[
		\cE_P(f,f)
		\geq
		\sum_{j=1}^N
		\gamma_j\cE_{K_j}(f,f),
		\qquad
		\gamma_j>0,
		\]
		for every \(f\in L^2(\varpi)\).
		
		Suppose further that there exist constants
		\(\phi_1,\ldots,\phi_N>0\) such that every measurable set \(S\) with
		\(0<\varpi(S)\leq1/2\) can be assigned an index
		\(j(S)\in\{1,\ldots,N\}\) satisfying
		\[
		\mathcal{J}_{K_{j(S)}}(S,S^c)
		\geq
		\phi_{j(S)}\varpi(S).
		\]
		Then
		\[
		\Gap(P)
		\geq
		\left(
		2\sum_{j=1}^N
		\frac{1}{\gamma_j\phi_j^2}
		\right)^{-1}.
		\]
	\end{theorem}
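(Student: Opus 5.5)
The plan is to deduce \cref{thm:aggregation} directly from \cref{lem:fractional} by choosing suitable weights $\beta_j$. The hard assignment $j(S)$ gives, for each set $S$, only one kernel with a useful flow. So the $\beta_j$ have to be chosen so that any single assigned kernel already makes $\sum_j\beta_j\mathcal J_{K_j}(S,S^c)\geq\varpi(S)$. At the same time, $\sum_j\beta_j^2/\gamma_j$ should equal the quantity $\sum_j 1/(\gamma_j\phi_j^2)$ appearing in the conclusion.

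I would take $\beta_j=1/\phi_j$ for every $j\in\{1,\ldots,N\}$. These are positive, so
\[
\sum_{j=1}^N\frac{\beta_j^2}{\gamma_j}=\sum_{j=1}^N\frac{1}{\gamma_j\phi_j^2}>0,
\]
and the nondegeneracy hypothesis of \cref{lem:fractional} holds. The Dirichlet-form comparison $\cE_P\geq\sum_j\gamma_j\cE_{K_j}$ is assumed verbatim, so it transfers without change.

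Next I would verify the flow hypothesis. Fix a measurable $S$ with $0<\varpi(S)\leq1/2$. Every flow $\mathcal J_{K_j}(S,S^c)=\int_S K_j(x,S^c)\,\varpi(\df x)$ is nonnegative, because $K_j(x,S^c)\in[0,1]$. So all terms with $j\neq j(S)$ can be dropped:
\[
\sum_{j=1}^N\beta_j\mathcal J_{K_j}(S,S^c)\geq\frac{1}{\phi_{j(S)}}\mathcal J_{K_{j(S)}}(S,S^c)\geq\varpi(S),
\]
where the last step uses the assumed bound at the assigned index. \cref{lem:fractional} then gives $\Gap(P)\geq\bigl(2\sum_j 1/(\gamma_j\phi_j^2)\bigr)^{-1}$, which is the claim.

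There is essentially no obstacle here. The only points to check are measurability, the nonnegativity of the flows used to discard the unassigned terms, and the observation that the assignment $S\mapsto j(S)$ needs no regularity, since \cref{lem:fractional} only asks for a pointwise-in-$S$ inequality. The real work sits in \cref{lem:fractional}, whose proof (via a co-area/level-set decomposition of $f^2$ and Cauchy--Schwarz against the weights $\gamma_j$) is deferred to \cref{app:fractional}.
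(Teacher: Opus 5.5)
Your proposal is correct and matches the paper's proof exactly: the paper also applies \cref{lem:fractional} with $\beta_j=1/\phi_j$ and drops the nonnegative unassigned flow terms to obtain $\sum_j\beta_j\mathcal{J}_{K_j}(S,S^c)\geq\mathcal{J}_{K_{j(S)}}(S,S^c)/\phi_{j(S)}\geq\varpi(S)$. Your explicit checks of the nondegeneracy condition $\sum_j\beta_j^2/\gamma_j>0$ and of the nonnegativity of the flows are details the paper leaves implicit.
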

	
	\begin{proof}
		Apply \cref{lem:fractional} with
		\[
		\beta_j=\frac1{\phi_j},
		\qquad j=1,\ldots,N.
		\]
		For every measurable \(S\) with \(0<\varpi(S)\leq1/2\),
		\[
		\begin{aligned}
			\sum_{j=1}^N
			\beta_j\mathcal{J}_{K_j}(S,S^c)
			&\geq
			\frac{
				\mathcal{J}_{K_{j(S)}}(S,S^c)
			}{
				\phi_{j(S)}
			}\\
			&\geq
			\varpi(S).
		\end{aligned}
		\]
		The conclusion follows directly from \cref{lem:fractional}.
	\end{proof}

	\section{Proof of the Main Theorem}\label{sec:main-proof}
	
	\begin{proof}[Proof of \cref{thm:main}]
		Let $b_0\leq1/2$ and $A_0\geq 2$ be given in \cref{prop:flow}, and
		put $\widetilde p_\star=A_0p_\star$, where $p_\star$ is defined in \cref{thm:main}.  We divide the proof into
		three parts.
		
		\bigskip
		
		\noindent{\it Part 1: The geometric moment ladder.}
		
		Suppose for now that \(\widetilde p_\star<d\).  Let
		\begin{equation}\label{eq:pj}
			p_j=4^j \widetilde p_\star,
			\qquad j=0,1,\ldots,J,
		\end{equation}
		where \(J\) is the smallest integer for which \(p_J\geq d\).  Thus
		\begin{equation}\label{eq:pJ-range}
			d\leq p_J<4d.
		\end{equation}
		Define the nominal scales
		\begin{equation*}
			\tau_j=
			\frac{b_0}{L\sqrt{p_j(d+p_j)}},
		\end{equation*}
		and put
		\begin{equation}\label{eq:theta-tj}
			\theta=\min\left\{1,\frac{H}{\tau_0}\right\},
			\qquad
			t_j=\theta\tau_j.
		\end{equation}
		Then \(t_0\leq H\), and
		\[
		\frac{t_{j+1}}{t_j}
		=\frac12\sqrt{\frac{d+p_j}{d+4p_j}}
		<\frac12.
		\]
		Consequently, the Borel sets $[t_j/2,t_j]$
		are pairwise disjoint, have Lebesgue measure \(t_j/2\), and lie within $[0,H]$.
		Applying \cref{lem:Kt} to this finite
		family gives
		\begin{equation}\label{eq:ladder-dirichlet}
			\cE_{\overline P_H}(f,f)
			\geq
			\sum_{j=0}^J\frac{t_j}{2H}\cE_{\cK_{t_j}}(f,f).
		\end{equation}
		This prompts us to apply \cref{thm:aggregation}.
		
		Let \(S\) be measurable with \(0<\pi(S)\leq1/2\).
		We assign $S$ to a component $\cK_{t_j}$.
		That is, we identify the index $j(S)$ in \cref{thm:aggregation}.
		Since \(A_0\geq2\) and \(p_\star\geq1\),
		\[
		\frac{p_0}{2}=\frac{\widetilde p_\star}{2}
		\geq1>\log2.
		\] 
		The intervals
		\([\log 2,p_0/2]\), \((p_{j-1}/2,p_j/2]\) for \(1\leq j\leq J\), and
		\((p_J/2,\infty)\) are disjoint and cover \([\log 2,\infty)\).  Thus the
		following assignment is exhaustive and unambiguous.
		
		\begin{itemize}[leftmargin=2em]
			\item If \(\log 2\leq\log[1/\pi(S)]\leq p_0/2\), assign \(S\) to \(\cK_{t_0}\).
			By the first assertion of \cref{prop:flow},
			\[
			\mathcal{J}_{\cK_{t_0}}(S,S^c)
			\geq
			\phi_0\pi(S),
			\qquad
			\phi_0=2^{-12}\sqrt{m t_0\log 2}.
			\]
			
			\item For \(1\leq j\leq J\), if
			\(p_{j-1}/2<\log[1/\pi(S)]\leq p_j/2\), assign \(S\) to \(\cK_{t_j}\).
			By the first assertion of \cref{prop:flow},
			\[
			\mathcal{J}_{\cK_{t_j}}(S,S^c)
			\geq
			\phi_j\pi(S),
			\qquad
			\phi_j=2^{-12} \sqrt{\frac{m t_jp_{j-1}}{2}}= 2^{-12} \sqrt{\frac{m t_jp_j}{8}}.
			\]
			
			\item If \(\log[1/\pi(S)]>p_J/2\), assign \(S\) to the last component
			\(\cK_{t_J}\).  By \eqref{eq:pJ-range} and the assumption $b_0\leq1/2$,
			\[
			t_J
			\leq
			\frac{b_0}{L\sqrt{p_J(d+p_J)}}
			\leq
			\frac{1}{2Ld}.
			\]
			By the second assertion of \cref{prop:flow}, 
			\[
			\mathcal{J}_{\cK_{t_J}}(S,S^c)
			\geq
			\phi_J' \pi(S),
			\qquad
			\phi_J'= 2^{-12}  \min \left\{1, \sqrt{\frac{m t_Jp_J}{2}} \right\}.
			\]
			Moreover,
			\[
			\frac{mt_Jp_J}{2}
			\leq\frac{b_0}{2\kappa}\sqrt{\frac{p_J}{d+p_J}}
			\leq\frac{b_0}{2}<1.
			\]
			Weakening the flow bound by a constant factor, we obtain
			\[
			\mathcal{J}_{\cK_{t_J}}(S,S^c)
			\geq
			\phi_J \pi(S),
			\qquad
			\phi_J= 2^{-12}  \sqrt{\frac{m t_Jp_J}{8}}.
			\]
		\end{itemize}
		
		We may therefore apply \cref{thm:aggregation} to 
		\eqref{eq:ladder-dirichlet}, with
		\[
		\gamma_j=\frac{t_j}{2H},
		\qquad
		\phi_0^2= 2^{-24} m t_0\log 2,
		\qquad
		\phi_j^2= 2^{-27} m t_jp_j\quad(j\geq1).
		\]
		The summation estimate in \cref{lem:ladder-sum} yields
		\begin{equation*}
			\sum_{j=0}^J\frac1{\gamma_j\phi_j^2}
			\leq
			C\frac{HL^2 \widetilde p_\star (d+ \widetilde p_\star )}
			{m\theta^2b_0^2},
		\end{equation*}
		where \(C \in (0,\infty)\) is universal.  Hence,
		\begin{equation}\label{eq:ladder-gap-bound}
			\Gap(\overline P_H)
			\geq
			\frac{m\theta^2b_0^2}
			{2CHL^2 \widetilde p_\star (d+ \widetilde p_\star )}
			=
			\frac{m}{2CH}
			\min\left\{
			H,
			\frac{b_0}{L\sqrt{ \widetilde p_\star (d+ \widetilde p_\star )}}
			\right\}^{2}.
		\end{equation}
		
		\bigskip
		
		\noindent{\it Part 2: A conservative bound}
		
		The bound \eqref{eq:ladder-gap-bound} is almost what we need, but it has been
		established only for $ \widetilde p_\star <d$.  To obtain the exact bound in
		\cref{thm:main}, we derive a lower bound on $\Gap(\overline P_H)$ using only
		conservative step sizes.
		
		Define the safe step size
		\[
		t_{\mathrm s}=\min\left\{H,\frac{b_0}{Ld}\right\}.
		\]
		Because \(b_0\leq1/2\), $t_{\mathrm s} \leq 1/(2Ld)$.
		For every measurable \(S\) with \(0<\pi(S)\leq1/2\), apply the second assertion of
		\cref{prop:flow} to obtain
		\begin{equation} \label{eq:flow-safe}
			\mathcal{J}_{\cK_{t_{\mathrm s}}}(S,S^c) \geq  2^{-12}  \pi(S) \, \min \left\{ 1, \sqrt{mt_{\mathrm s} \log[1/\pi(S)]}  \right\} \geq  2^{-12}  \pi(S) \, \min \left\{ 1, \sqrt{mt_{\mathrm s} \log 2}  \right\}.
		\end{equation}
		Since
		\[
		m t_{\mathrm s}\log 2
		\leq\frac{b_0\log 2}{\kappa d}<1,
		\]
		the bound \eqref{eq:flow-safe} becomes
		\[
		\mathcal{J}_{\cK_{t_{\mathrm s}}}(S,S^c) \geq  2^{-12} \pi(S)\sqrt{mt_{\mathrm s}\log 2}.
		\]
		By \cref{lem:Kt},
		\[
		\cE_{\overline P_H}(f,f)
		\geq
		\frac{t_{\mathrm s}}{2H}\cE_{\cK_{t_{\mathrm s}}}(f,f).
		\]
		Apply \cref{thm:aggregation} with $N=1$, $K_1=\cK_{t_{\mathrm s}}$,
		$\gamma_1=t_{\mathrm s}/(2H)$, and
		$\phi_1= 2^{-12} \sqrt{mt_{\mathrm s}\log 2}$ to obtain
		\begin{equation}\label{eq:safe-gap-bound}
			\Gap(\overline P_H)
			\geq
			 2^{-26} (\log 2)\frac{m t_{\mathrm s}^{2}}{H}
			= 2^{-26} (\log 2)\frac{m}{H}
			\min\left\{H,\frac{b_0}{Ld}\right\}^2.
		\end{equation}
		
		\bigskip

		\noindent{\it Part 3: Completion of the proof}

		If $\widetilde p_\star\geq d$, then
		\[
		\frac1{\sqrt{\widetilde p_\star(d+\widetilde p_\star)}}\leq\frac1d.
		\]
		If $\widetilde p_\star<d$,
		combine \eqref{eq:ladder-gap-bound} and \eqref{eq:safe-gap-bound}, using
		\[
		\max\{\min(H,a)^2,\min(H,b)^2\}
		=\min\{H,\max(a,b)\}^{2}
		\qquad(a,b,H>0).
		\]
		In either case, with the universal constant
		$c_1=\min\{(2C)^{-1}, 2^{-26} \log2\}>0$, we obtain
		\[
		\Gap(\overline P_H)
		\geq c_1\frac{m}{H}
		\min\left\{H,\frac{b_0}{L}
		\max\left\{
		\frac1{\sqrt{\widetilde p_\star(d+\widetilde p_\star)}},\frac1d
		\right\}\right\}^{2}.
		\]
		Since $A_0\geq1$ and $\widetilde p_\star=A_0p_\star$,
		\[
		\widetilde p_\star(d+\widetilde p_\star)
		=A_0p_\star(d+A_0p_\star)
		\leq A_0^2p_\star(d+p_\star),
		\]
		and, because $b_0 \leq 1$, it follows that
		\[
		\Gap(\overline P_H)
		\geq c_1\left(\frac{b_0}{A_0}\right)^2\frac{m}{H}
		\min\left\{H,\frac1L\max\left\{
		\frac1{\sqrt{p_\star(d+p_\star)}},\frac1d
		\right\}\right\}^{2}.
		\]
		Choosing
		$c_0=(c_1/2)(b_0/A_0)^2$
		proves \eqref{eq:master-gap}.
		
		The identity kernel has zero Dirichlet form, so
		\[
		\Gap\bigl((I+\overline P_H)/2\bigr)
		=\frac12\Gap(\overline P_H).
		\]
		
	\end{proof}

	\section{Lean verification}\label{sec:lean-verification}
	
	The accompanying Lean~4 development \citep{deMouraUllrich2021},
	available in the GitHub repository \citep{qinMALALean2026}, formalizes
	all stated mathematical results of this article except
	\cref{lem:linear-increment,lem:integrated-increments,lem:frozen-endpoint-law,lem:path-likelihood}.
	The main first-order input consists of a continuously differentiable
	potential, the strong-convexity inequality in
	\eqref{eq:first-order-assumptions}, and a Lipschitz bound on its
	gradient.

	The development connects these inputs to the spectral-gap and mixing
	bounds in \cref{thm:main,cor:sqrt-d-endpoint,cor:mixing}, as well as the central limit theorems, asymptotic-variance bounds and comparison,
	and nonstationary mean-square error bounds in \cref{cor:asymptotic-variance,cor:variance-separation,cor:nonstationary-variance}. It also includes the smooth
	hard-target construction and fixed-step minimax obstruction in
	\cref{prop:generic-fixed-step-obstruction,prop:minimax-fixed-step-ceiling},
	together with the fixed-step spectral-gap lower bound in
	\cref{prop:small-fixed-step-gap}, summarized in
	\cref{rem:minimax-fixed-step-ceiling}. The fractional aggregation
	lemma and component-aggregation theorem,
	\cref{lem:fractional,thm:aggregation}, are formalized as reusable results
	independently of their application to MALA. The formalized algorithms and
	quantities include the target distribution, proposal and acceptance
	rules, randomized and lazy transition kernels, spectral gaps, trajectory
	laws, sample means, and asymptotic variances.
	
	The stationary-rejection and overlap conclusions in
	\cref{prop:stationary-rejection,prop:overlap} use an alternative
	discrete-time proof based on finite-dimensional Gaussian likelihood
	estimates, an Euler/random-walk-Metropolis comparison, and weak-limit
	stability. This route bypasses the continuous-time results in \cref{lem:linear-increment,lem:integrated-increments,lem:frozen-endpoint-law,lem:path-likelihood}.
	\cref{prop:stationary-rejection} is formalized under the
	nonconvex assumptions stated at the beginning of \cref{app:rejection-overlap}.
	
	A reusable Bakry--Ledoux enlargement bound for targets satisfying
	the standing first-order assumptions is derived internally from a
	Gaussian Ornstein--Uhlenbeck argument and a finite-Euler weak limit.
	

	The sole direct external Lean library is mathlib \citep{Mathlib2020};
	its standard dependencies and Lean's logical foundations are retained.
	The complete development was kernel-checked with Lean~4.33.0 and
	mathlib~4.33.0. The full build and the dependency audit of 318 selected
	declarations succeeded, with no proof placeholders or project-specific
	axioms. The audited declarations depend only on the standard axioms
	\texttt{propext}, \texttt{Classical.choice}, and \texttt{Quot.sound}.
	
	The package documentation provides a statement-by-statement theorem
	map, a guide to comparing the definitions with the paper, and routes
	through the complete proof dependencies. 
	
	\section{Numerical experiments}\label{sec:simulations}
	
	We compare fixed-step MALA and uniformly randomized-step MALA on a
	perturbed-Gaussian target.
	The simulation package can be found in a GitHub repository \citep{qinMALALean2026}.
	
	Let $h_0$ be a positive number and consider a potential function
	\begin{equation}\label{eq:simulation-hard-target}
		U_{d,h_0}(x)
		=\frac{m}{2}x_1^2
		+\sum_{i=2}^d\left[
		\frac{L+m}{4}x_i^2
		-\frac{(L-m)h_0}{2}
		\cos\left(\frac{x_i}{\sqrt{h_0}}\right)
		\right].
	\end{equation}
	In \cref{app:fixed-step-obstruction}, perturbed Gaussian distributions with densities proportional to $e^{-U_{d,h}}$ are investigated from a theoretical perspective.
	As shown in \cref{prop:generic-fixed-step-obstruction},
	\(mI_d\preceq\nabla^2U_{d,h_0}(x)\preceq LI_d\) for every \(x\).
	Throughout, \(m=0.1\) and \(L=1\), so \(\kappa=10\).
	This type of construction appears in \citep{ChewiEtAl2021,LeeShenTian2021,WuSchmidlerChen2022}, and it serves as a hard target for fixed-step MALA.
	
	For an endpoint \(H>0\), we compare a fixed-step MALA $P_h$ with \(h=H\) and the randomized-step $\overline P_H$.  We report the mean
	acceptance probability $\E\left[
	\alpha_h(X,Y)
	\right]$
	and the expected squared jumping distance per
	coordinate,
	\begin{equation}\nonumber
		\operatorname{ESJD}
		=\frac1d\E\left[
		\alpha_h(X,Y)\norm{Y-X}^2
		\right],
	\end{equation}
	where $X$ is the current state, and $Y$ is the proposed state.
	For fixed-step MALA, $Y \mid X,h \sim Q_h(X, \cdot)$ where $h = H$ is fixed.
	For randomized-step MALA, $Y \mid X, h \sim Q_h(X, \cdot)$ and $h \sim \mathrm{Unif}(0,H)$.
	We will investigate the setting $X = 0$ as well as $X \sim \pi$.
	To estimate the mean acceptance rate and the ESJD, we use classical Monte Carlo.
	That is, independent copies of $(X,Y,h)$ are generated, and expectations under the empirical distribution are used as estimates.
	
	\subsection{Acceptance at the origin}
	
	For each \(d\in\{25,50,100,200,400\}\), we set
	\[
	h_0=\frac{4}{L\sqrt d}
	\]
	and compare fixed-step MALA with \(h=h_0\) against randomized-step MALA with
	\(H=h_0\) in terms of the mean acceptance rate at the origin.
	As shown in \cref{app:fixed-step-obstruction}, the origin is where fixed-step MALA struggles to leave.
	Each estimate uses \(80{,}000\)
	independent proposals.
	
	\begin{figure}[tbp]
		\centering
		\includegraphics[width=0.72\textwidth]
		{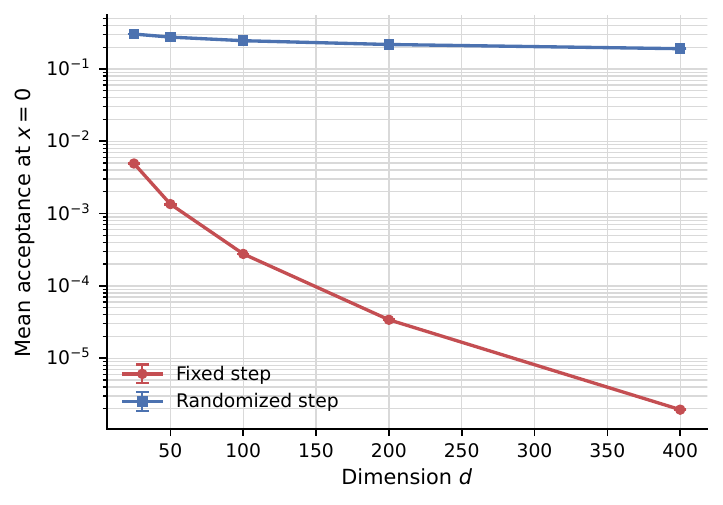}
		\caption{Estimated mean acceptance probability at the origin for the target
			\eqref{eq:simulation-hard-target}.  Fixed-step MALA uses \(h=h_0\), while
			randomized-step MALA uses \(h\sim\operatorname{Unif}(0,h_0)\).  The vertical
			axis is logarithmic, and vertical bars show \(\pm2\) estimated standard
			errors; most are narrower than the markers.}
		\label{fig:hard-target-origin-acceptance}
	\end{figure}
	
	As \cref{fig:hard-target-origin-acceptance} shows, the fixed-step acceptance
	probability decreases from approximately \(4.92\times10^{-3}\) at \(d=25\)
	to \(1.94\times10^{-6}\) at \(d=400\).  The randomized-step acceptance
	probability decreases much more slowly, from \(0.304\) to \(0.190\).
	Because rejection leaves a chain initialized at the origin unchanged, the
	time to its first accepted move is geometric.  At \(d=400\), the reciprocal
	acceptance estimates imply mean waiting times of approximately
	\(5.14\times10^5\) iterations for fixed-step MALA and \(5.25\) iterations
	for randomized-step MALA.  The experiment therefore directly displays the
	sticky behavior behind the fixed-step obstruction: proposals at the
	oscillation scale \(h_0\) have rapidly deteriorating acceptance, whereas
	uniform randomization retains a substantial probability of selecting a
	smaller step.

	To continue the origin experiment beyond one step, we next examine what
	happens after fixed-step MALA eventually escapes the hard point.  We run one
	chain of each type for \(40{,}000\) iterations on the \(d=200\) target,
	starting both chains at the origin and taking \(H=h_0\).  
	We record the first
	coordinate at every iteration.
	
	\begin{figure}[htbp]
		\centering
		\includegraphics[width=\textwidth]
		{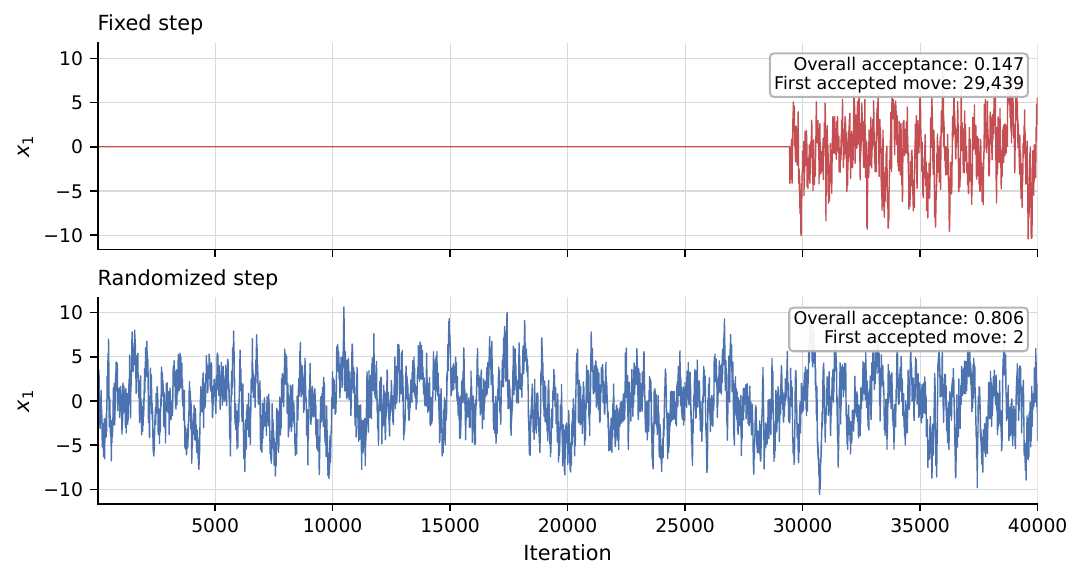}
		\caption{First-coordinate traces continuing the origin experiment for the
			\(d=200\) hard target, with both chains initialized at the origin and
			\(H=h_0=4/\sqrt{200}\).  The two panels use common horizontal and vertical
			scales.}
		\label{fig:hard-target-first-coordinate-trace}
	\end{figure}
	
	In this realization, the fixed-step chain rejects its first \(29{,}438\)
	proposals and first moves at iteration \(29{,}439\).  It then makes
	\(5{,}882\) accepted moves by iteration \(40{,}000\), so its overall
	acceptance rate is \(0.147\).  Over the \(10{,}562\)-iteration interval from
	its first accepted move through the end of the run, its empirical acceptance
	rate is \(0.557\).  The trace therefore shows that, once the fixed chain
	escapes the sticky point, it begins moving on an ordinary scale; the low
	overall acceptance rate is dominated by the long initial wait.  The
	randomized chain first moves at iteration \(2\), makes \(32{,}253\) accepted
	moves, and has overall acceptance rate \(0.806\).

	\subsection{Stationary acceptance and jumping distance}
	
	The preceding experiment deliberately starts at an atypical point.  We next
	compare average one-step behavior under stationarity.  We fix \(d=200\) and
	\(h_0=4/\sqrt{200}\), and vary \(H/h_0\) from \(0.05\) to \(100\), using a
	fine grid near the efficient range and a logarithmically spaced grid on the
	over-tuning side.
	
	Exact stationary samples are available because
	\eqref{eq:simulation-hard-target} is a product target.  The first coordinate
	is \(N(0,m^{-1})\).  For each remaining coordinate, we propose
	\(\widetilde X\sim N(0,2/(L+m))\) and accept it with probability
	\[
	\exp\left\{
	\frac{(L-m)h_0}{2}
	\left[
	\cos\left(\frac{\widetilde X}{\sqrt{h_0}}\right)-1
	\right]
	\right\}.
	\]
	This is an exact rejection sampler.  At every endpoint~$H$ and for each method,
	we use \(48{,}000\) independent stationary one-step experiments.
	
	\begin{figure}[tbp]
		\centering
		\includegraphics[width=\textwidth]
		{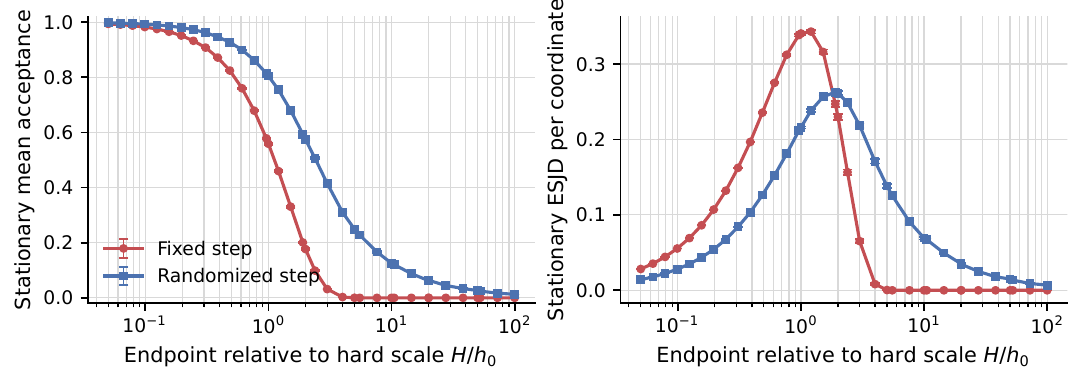}
		\caption{Stationary one-step performance for the \(d=200\) target with
			\(h_0=4/\sqrt{200}\).  Fixed-step MALA uses \(h=H\), while randomized-step
			MALA uses \(h\sim\operatorname{Unif}(0,H)\).  Vertical bars show
			\(\pm2\) estimated standard errors.}
		\label{fig:hard-target-stationary}
	\end{figure}
	
	\Cref{fig:hard-target-stationary} shows the expected
	efficiency--robustness tradeoff.  At \(H=h_0\), fixed-step MALA has
	acceptance \(0.559\) and ESJD \(0.340\), while randomized-step MALA has
	acceptance \(0.804\) and ESJD \(0.216\).  The largest observed fixed-step
	ESJD is \(0.343\), at \(H/h_0\approx1.21\), whereas the largest observed
	randomized-step ESJD is \(0.263\), at \(H/h_0\approx1.90\).  Thus a
	well-tuned fixed step has the larger peak ESJD.
	
	Randomization becomes advantageous when $H$ is too large.  
	At \(H=10h_0\), fixed-step acceptance is
	extremely close to~0, whereas randomized-step MALA retains
	acceptance \(0.125\) and ESJD \(0.0692\).  Even at \(H=100 \, h_0\), the
	randomized method has acceptance \(0.0127\) and ESJD \(0.00681\), while the
	corresponding fixed-step estimates are below floating-point resolution.
	Randomization does not prevent performance from eventually decreasing as
	\(H\) grows, but it changes the abrupt fixed-step collapse into a much more
	gradual decline.
	
	The two experiments also expose a distinction relevant to the global
	spectral gap.  When \(d=200\) and \(H=h_0\), fixed-step acceptance is about
	\(0.559\) under stationarity but only \(3.40\times10^{-5}\) at the origin.
	Thus stationary-average diagnostics largely miss the small sticky region, which leads to the exceedingly small spectral gap of fixed-step MALA, as shown in \cref{app:fixed-step-obstruction}.
	Uniform randomization improves
	acceptance both in that region and under over-tuning, although it does not
	dominate a well-tuned fixed step in stationary ESJD.  These one-step
	experiments illustrate tuning robustness; they do not estimate the spectral
	gap itself.

	\bigskip
	
	\noindent{\bf Acknowledgment:} The author thanks Guanyang Wang for reading the paper and for providing helpful comments.
	
	\vspace{2cm}
	
	\noindent{\LARGE \bf Appendices}
	
	\appendix
	
	\section{Proof of \cref{prop:minimax-fixed-step-ceiling}}
	\label{app:fixed-step-obstruction}

	We follow an existing line of research on hard targets for fixed-step
	MALA. Perturbed-Gaussian constructions appear in
	\citet[Theorems~8, 9, and~36]{ChewiEtAl2021}.
	The product structure combining a Gaussian coordinate with
	cosine-perturbed Gaussian coordinates appears in
	\citet[Supplement, equation~(20)]{LeeShenTian2021} and
	\citet[Equation~(28) and Lemma~8]{WuSchmidlerChen2022}.
	Below, we use a modified version of the latter construction to obtain
	the explicit bound in \cref{prop:minimax-fixed-step-ceiling}.

	\begin{proposition}
		\label{prop:generic-fixed-step-obstruction}
		There are universal constants $c,C>0$ such that the following holds.
		Let $d\geq2$, $0<m<L$, and $h>0$.  Define
		\begin{equation}
			\label{eq:generic-hard-potential}
			U_{d,h}(x)
			=\frac{m}{2}x_1^2
			+\sum_{i=2}^d\left[
			\frac{L+m}{4}x_i^2
			-\frac{(L-m)h}{2}
			\cos\left(\frac{x_i}{\sqrt h}\right)
			\right].
		\end{equation}
		Then $U_{d,h}$ is infinitely differentiable and
		\begin{equation}
			\label{eq:generic-hard-curvature}
			mI_d\preceq\nabla^2U_{d,h}(x)\preceq LI_d,
			\qquad x\in\R^d.
		\end{equation}
		If $P_h$ is the MALA kernel with step size~$h$ and target density
		proportional to $e^{-U_{d,h}}$, then
		\begin{equation}
			\label{eq:generic-fixed-step-gap-upper}
			\Gap(P_h)
			\leq
			C\min\left\{
			mh+\frac{(mh)^2}{2},
			\exp\left[
			-c(d-1)\min\{(L-m)h,1\}
			\right]
			\right\}.
		\end{equation}
		Writing $\kappa=L/m$, this further implies, after possibly changing
		the universal constants $c$ and~$C$, that
		\begin{equation}
			\label{eq:generic-fixed-step-gap-upper-kappa}
			\Gap(P_h)
			\leq
			C\min\left\{
			\kappa^{-1}Lh,
			\exp\left[
			-cd\min\{(1-\kappa^{-1})Lh,1\}
			\right]
			\right\}.
		\end{equation}
	\end{proposition}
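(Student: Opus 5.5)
The proof has three parts: a direct Hessian check, a test function in the slow Gaussian coordinate for the first bound, and an indicator of a small ball around the origin for the exponential bound.

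\emph{Curvature.} The potential is separable, so the Hessian is diagonal. The first entry is $m$. For $i\geq2$ the entry is
\[
\frac{L+m}{2}+\frac{L-m}{2}\cos\left(\frac{x_i}{\sqrt h}\right),
\]
because differentiating $-\tfrac{(L-m)h}{2}\cos(x_i/\sqrt h)$ twice gives $\tfrac{L-m}{2}\cos(x_i/\sqrt h)$. This entry lies in $[m,L]$, which proves \eqref{eq:generic-hard-curvature}. Smoothness is clear.

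\emph{First bound: $\Gap(P_h)\lesssim mh+(mh)^2/2$.} Use the test function $f(x)=x_1$, with $\Var_\pi(f)=1/m$ since the first coordinate is exactly $N(0,m^{-1})$. The Dirichlet form is at most half the expected squared proposal increment in coordinate~1, because rejection only lowers it:
\[
\cE_{P_h}(f,f)\leq\tfrac12\,\E_\pi\bigl[(-hm x_1+\sqrt{2h}Z_1)^2\bigr]=\tfrac12\bigl(h^2m+2h\bigr).
\]
Dividing by $1/m$ gives $\Gap(P_h)\leq mh+(mh)^2/2$.

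\emph{Second bound: sticky neighbourhood of the origin.} Use $f=\1_S$ with $S=\{x:\norm{x-0}\leq\rho\}$ intersected with a suitable box, or more simply $S=\{x: \abs{x_i}\leq \epsilon\sqrt h \text{ for all } i\geq2,\ \abs{x_1}\leq\epsilon\sqrt h\}$. Then $\Gap\leq \mathcal J(S,S^c)/(\pi(S)(1-\pi(S)))\leq 2\sup_{x\in S}[1-r_h(x)]$, where $1-r_h(x)$ is the acceptance probability from $x$, since $\pi(S)\leq1/2$. It therefore suffices to show that the acceptance probability from points of $S$ is at most $\exp[-c(d-1)\min\{(L-m)h,1\}]$. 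The mass of $S$ is irrelevant, only $0<\pi(S)\leq1/2$ matters.

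\emph{Key step: the log acceptance ratio is a sum of independent terms.} By separability, the log MH ratio $\log[e^{-U(y)}q_h(y,x)/(e^{-U(x)}q_h(x,y))]$ is a sum over coordinates of independent terms $D_i$ driven by $Z_i$. Coordinate~1 contributes an exactly reversible Gaussian term, up to an $O(mh\cdot)$ discrepancy that is harmless. For $i\geq2$, at $x_i\approx0$ the drift perturbation term $\tfrac{L-m}{2}\sqrt h\sin(x_i/\sqrt h)$ vanishes. The proposal $y_i=x_i(1-h\cdot)+\sqrt{2h}Z_i$ lands at $\sqrt h$-scale positions where $\cos(y_i/\sqrt h)$ is of order one away from~1. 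The energy increment then includes $\tfrac{(L-m)h}{2}(1-\cos(\sqrt2 Z_i))$, and the backward proposal density penalizes the drift $\tfrac{L-m}{2}\sqrt h\sin(\sqrt2Z_i)$, giving a term of order $(L-m)h\sin^2$. The plan is to compute $\E D_i\leq -c_1\min\{(L-m)h,1\}$ for $x_i$ in the $\epsilon$-box, uniformly, using the explicit Gaussian expectations of $\cos(\sqrt2Z)$ and $\sin^2(\sqrt2Z)$ (e.g. $\E\cos(\sqrt2Z)=e^{-1}$). The Gaussian quadratic parts cancel exactly as in Gaussian MALA, up to terms controlled by $(L+m)h$. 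When $Lh$ is large, the Gaussian part itself already makes acceptance exponentially small, consistent with the $\min\{\cdot,1\}$.

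\emph{Concentration and conclusion.} For fixed $\lambda$ small, bound $\E\exp(\lambda D_i)\leq\exp(-c_2\lambda\min\{(L-m)h,1\})$ for each $i\geq2$. The $D_i$ are bounded above by quadratic functions of $Z_i$, so they are sub-exponential and the MGF is finite. Then
\[
\E[1\wedge e^{\sum D_i}]\leq \E e^{\lambda\sum D_i}\leq \exp[-c(d-1)\min\{(L-m)h,1\}],
\]
using $1\wedge e^{s}\leq e^{\lambda s}$ for $\lambda\in(0,1]$.

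\emph{Expected obstacle.} The main difficulty is this MGF estimate with uniform constants across all regimes of $(L-m)h$ and $(L+m)h$. I would split into $(L+m)h\leq 1$ and $(L+m)h>1$; in the latter case, a Gaussian-MALA-type rejection argument gives the exponential directly.

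\emph{Second form.} The bound \eqref{eq:generic-fixed-step-gap-upper-kappa} follows by writing $mh=\kappa^{-1}Lh$ and handling two cases. If $mh\leq1$, then $mh+(mh)^2/2\leq \tfrac32 mh$. If $mh>1$, then $(L-m)h\geq(\kappa-1)mh$, so the exponential term is at most $e^{-cd\min\{(1-\kappa^{-1})Lh,1\}}$. Finally, $d-1\geq d/2$.
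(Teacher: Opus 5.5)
Your curvature check, the test function $f(x)=x_1$, and the passage to the $\kappa$-form agree with the paper. For $mh>1$, the relevant point in that passage is just that $\kappa^{-1}Lh=mh>1$ exceeds the exponential term. The gap is in the exponential bound. Its decisive step, namely the negativity of the log Hastings ratio near the origin and the resulting MGF estimate, is only announced, and you describe its structure incorrectly. At $x=0$ one has $\nabla U_{d,h}(0)=0$ and $Y=\sqrt{2h}Z$. Write $V_i=\sqrt2Z_i$, $a=(L+m)/2$, $b=(L-m)/2$; then
\[
R_h(0,Y)=-\frac{m^2h^2}{2}Z_1^2+bh\sum_{i=2}^dA(V_i)-\frac{h^2}{4}\sum_{i=2}^d\bigl[aV_i+b\sin V_i\bigr]^2,
\qquad A(v)=\cos v-1+\frac{v\sin v}{2}.
\]
The reverse-proposal drift does not contribute a penalty of order $(L-m)h\sin^2$. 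The $\sin^2$ piece sits inside the last, nonpositive sum and is of order $(bh)^2$. At order $bh$ the drift contributes the \emph{favorable} cross term $\tfrac{bh}{2}V_i\sin V_i$, whose mean $bh/e$ competes with the energy penalty $bh(1-\cos V_i)$, of mean $bh(1-1/e)$. The obstruction exists only because $\E A(V_i)=2/e-1<0$, and this requires Stein's identity $\E[V_i\sin V_i]=2e^{-1}$ in addition to $\E\cos V_i=e^{-1}$.

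Your fixed-$\lambda$ MGF bound also cannot come from the $bhA$ term when $(L-m)h>1$. Since $A(v)$ has positive excursions of size about $\abs{v}/2$, $\E e^{\theta A(V_i)}\to\infty$ as $\theta\to\infty$. Your remark that the Gaussian part alone forces rejection ignores the positive fluctuations of $bh\sum A(V_i)$. You would have to show that the last sum dominates them, e.g.\ via $\abs{av+b\sin v}\geq c\,a\abs{v}$ (which uses $a\geq b$), or else take $\lambda\asymp\min\{1,1/(bh)\}$.

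The paper's proof avoids all of this. The first and last terms of $R_h(0,Y)$ are nonpositive and are simply discarded. The sum $\sum_{i\geq2}A(V_i)$ has a law that does not depend on $h$, with sub-Gaussian summands ($\abs{A(v)}\leq2+\abs{v}/2$) and mean $-a_0(d-1)$, where $a_0=1-2/e$. Off an event of probability at most $e^{-c_0(d-1)}$ one gets $R_h(0,Y)\leq-\tfrac{a_0}{4}(d-1)(L-m)h$. Hence $a_h(0)\leq e^{-a_0(d-1)(L-m)h/4}+e^{-c_0(d-1)}$ simultaneously for all $h$. This is where $\min\{(L-m)h,1\}$ comes from, and no split according to $(L+m)h$ is needed. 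Continuity of $x\mapsto a_h(x)$ (by dominated convergence) then gives a small ball $S$ about $0$ with $0<\pi(S)\leq1/2$ on which $a_h$ is at most twice this bound.

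As you note, $\pi(S)$ is otherwise irrelevant, so there is no reason to prove estimates uniformly over a box of fixed width $\epsilon\sqrt h$. Such a box would in fact violate $\pi(S)\leq1/2$ once $mh$ is large (depending on $\epsilon$ and $d$). In that regime every coordinate of $\pi$ lives on a scale much smaller than $\sqrt h$, and it is exactly the regime where the exponential term is the operative one.
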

	
	\begin{proof}
		For $i\geq2$, the $i$th diagonal entry of the Hessian of
		$U_{d,h}$ is
		\[
		\frac{L+m}{2}
		+\frac{L-m}{2}
		\cos\left(\frac{x_i}{\sqrt h}\right),
		\]
		which lies in $[m,L]$.  The first diagonal entry equals~$m$, and
		all off-diagonal entries vanish.  This proves
		\eqref{eq:generic-hard-curvature}.
		
		We first establish the local upper bound.  The target is a product
		measure, and its first marginal is $N(0,m^{-1})$.  Use
		$f(x)=x_1$ in the Poincar\'e quotient.  Let $X$ have the target
		distribution, let $Z\sim N_d(0,I_d)$ be independent of~$X$, and
		write
		\[
		\widetilde Y
		=X-h\nabla U_{d,h}(X)+\sqrt{2h}\,Z
		\]
		for the MALA proposal.  The actual transition either moves to
		$\widetilde Y$ or remains at~$X$.  Consequently,
		\begin{align*}
			\cE_{P_h}(f,f)
			&\leq
			\frac12\E\left[
			\bigl(-hmX_1+\sqrt{2h}\,Z_1\bigr)^2
			\right]\\
			&=\frac12\left(h^2m+2h\right),
		\end{align*}
		where we used $\E(X_1^2)=m^{-1}$ and the independence of
		$X_1$ and~$Z_1$.  Since $\Var_\pi(f)=m^{-1}$, it follows that
		\begin{equation}
			\label{eq:generic-local-upper}
			\Gap(P_h)
			\leq
			mh+\frac{(mh)^2}{2}.
		\end{equation}
		
		We next obtain the exponential upper bound by controlling the mean
		acceptance probability at the origin.  Write
		\[
		a=\frac{L+m}{2},
		\qquad
		b=\frac{L-m}{2}.
		\]
		At the origin, $\nabla U_{d,h}(0)=0$, so a proposal has the form
		\[
		Y=\sqrt{2h}\,Z.
		\]
		For a general differentiable potential~\(U\), the log Hastings ratio
		for a proposal from \(x\) to \(y\) has the following standard
		representation; see also
		\citet[Proposition~1.3, equation~(1.33)]{Eberle2014}.
		\begin{align}
			R_h(x,y)
			&:=\log
			\frac{e^{-U(y)}q_h(y,x)}
			{e^{-U(x)}q_h(x,y)}
			\notag\\
			&=U(x)-U(y)
			+\frac12
			\ip{y-x}{\nabla U(x)+\nabla U(y)}
			\notag\\
			&\quad
			+\frac h4\left[
			\norm{\nabla U(x)}^2
			-\norm{\nabla U(y)}^2
			\right].
			\label{eq:generic-log-Hastings-ratio}
		\end{align}
		For $i=2,\ldots,d$, put
		\[
		V_i=\frac{Y_i}{\sqrt h}=\sqrt2Z_i.
		\]
		Substituting \eqref{eq:generic-hard-potential} into
		\eqref{eq:generic-log-Hastings-ratio} gives
		\begin{align}
			R_h(0,Y)
			={}&-\frac{m^2h^2}{2}Z_1^2
			+bh\sum_{i=2}^d A(V_i)
			\notag\\
			&-\frac{h^2}{4}\sum_{i=2}^d
			\bigl[aV_i+b\sin(V_i)\bigr]^2,
			\label{eq:generic-hard-log-ratio}
		\end{align}
		where
		\[
		A(v)=\cos v-1+\frac{v\sin v}{2}.
		\]
		Both the first and the last terms in
		\eqref{eq:generic-hard-log-ratio} are nonpositive.
		
		Since $V_i\sim N(0,2)$,
		\[
		\E(\cos V_i)=e^{-1},
		\qquad
		\E(V_i\sin V_i)=2e^{-1}.
		\]
		The first identity follows from the characteristic function of
		$V_i$, and the second follows from Stein's identity.  Hence
		\begin{equation*}
			\E[A(V_i)]
			=\frac2e-1
			=-a_0,
			\qquad
			a_0=1-\frac2e>0.
		\end{equation*}
		Moreover,
		\[
		\abs{A(v)}\leq2+\frac{\abs v}{2}.
		\]
		Thus $A(V_i)-\E[A(V_i)]$ has a universally bounded
		sub-Gaussian norm; see, for example,
		\citep[Section~2.5.2]{Vershynin2018}.  The concentration
		inequality for sums of independent sub-Gaussian random variables
		\citep[Theorem~2.6.3]{Vershynin2018} therefore yields a universal
		constant $c_0>0$ such that
		\begin{equation}
			\label{eq:generic-hard-concentration}
			\mathbb P\left(
			\sum_{i=2}^d A(V_i)
			>-\frac{a_0(d-1)}{2}
			\right)
			\leq e^{-c_0(d-1)}.
		\end{equation}
		Outside the event in \eqref{eq:generic-hard-concentration},
		\[
		R_h(0,Y)
		\leq
		-\frac{a_0}{2}bh(d-1)
		=
		-\frac{a_0}{4}(d-1)(L-m)h.
		\]
		Let
		\[
		a_h(x)=\int\alpha_h(x,y)Q_h(x,\df y)
		\]
		denote the pointwise acceptance probability.  It follows that
		\begin{align*}
			a_h(0)
			&=\E\left[1\wedge e^{R_h(0,Y)}\right]
			\\
			&\leq
			\exp\left[
			-\frac{a_0}{4}(d-1)(L-m)h
			\right]
			+e^{-c_0(d-1)}
			\\
			&\leq
			2\exp\left[
			-c_1(d-1)\min\{(L-m)h,1\}
			\right]
		\end{align*}
		for a universal constant $c_1>0$.
		
		The function $x\mapsto a_h(x)$ is continuous.  Indeed, writing
		the proposal as
		\[
		x-h\nabla U_{d,h}(x)+\sqrt{2h}\,Z,
		\qquad Z\sim N_d(0,I_d),
		\]
		the resulting acceptance probability is continuous in~$x$ for
		every fixed~$Z$ and is bounded by one.  Continuity therefore
		follows from dominated convergence.
		
		Consequently, there is an open ball~$S$ centered at the origin
		such that
		\[
		0<\pi(S)\leq\frac12,
		\qquad
		\sup_{x\in S}a_h(x)
		\leq
		4\exp\left[
		-c_1(d-1)\min\{(L-m)h,1\}
		\right].
		\]
		A transition starting in~$S$ can leave~$S$ only if its proposal
		is accepted.  Hence
		\[
		\mathcal{J}_{P_h}(S,S^c)
		\leq
		4\exp\left[
		-c_1(d-1)\min\{(L-m)h,1\}
		\right]\pi(S).
		\]
		Testing the Poincar\'e quotient with $\mathbf 1_S$ gives
		\begin{align}
			\Gap(P_h)
			&\leq
			\frac{\cE_{P_h}(\mathbf 1_S,\mathbf 1_S)}
			{\Var_\pi(\mathbf 1_S)}
			\notag\\
			&=
			\frac{\mathcal{J}_{P_h}(S,S^c)}
			{\pi(S)[1-\pi(S)]}
			\notag\\
			&\leq
			8\exp\left[
			-c_1(d-1)\min\{(L-m)h,1\}
			\right].
			\label{eq:generic-hard-sticky-gap}
		\end{align}
		Combining \eqref{eq:generic-local-upper} and
		\eqref{eq:generic-hard-sticky-gap} proves
		\eqref{eq:generic-fixed-step-gap-upper}.
		
		Finally, we prove
		\eqref{eq:generic-fixed-step-gap-upper-kappa}.  We have
		\[
		L-m=(1-\kappa^{-1})L,
		\qquad
		d-1\geq\frac d2.
		\]
		Therefore
		\[
		(d-1)\min\{(L-m)h,1\}
		\geq
		\frac d2
		\min\{(1-\kappa^{-1})Lh,1\}.
		\]
		If $mh\leq1$, then
		\[
		mh+\frac{(mh)^2}{2}
		\leq\frac32mh
		=\frac32\kappa^{-1}Lh.
		\]
		If $mh>1$, then
		\[
		\exp\left[
		-cd\min\{(1-\kappa^{-1})Lh,1\}
		\right]
		\leq1<mh=\kappa^{-1}Lh,
		\]
		so the exponential bound is the smaller of the two terms in
		\eqref{eq:generic-fixed-step-gap-upper-kappa}.  After adjusting
		the universal constants, this proves
		\eqref{eq:generic-fixed-step-gap-upper-kappa}.
	\end{proof}

	We can now prove \cref{prop:minimax-fixed-step-ceiling}.
	
	\begin{proof}
		By \cref{prop:generic-fixed-step-obstruction}, there are universal
		constants $c_0,C_0>0$ such that, for every $h>0$,
		\begin{equation}
			\label{eq:minimax-two-branches}
			\inf_{\substack{U\in C^\infty(\R^d)\\
					mI_d\preceq\nabla^2U(x)\preceq LI_d,\ x\in\R^d}}
			\Gap(P_{U,h})
			\leq F(Lh),
		\end{equation}
		where
		\[
		F(t)
		=
		C_0\min\left\{
		\kappa^{-1}t,\,
		\exp\left[
		-c_0d\min\{(1-\kappa^{-1})t,1\}
		\right]
		\right\}.
		\]
		Set
		\[
		t_\star
		=
		\frac{2\log(\kappa d)}
		{c_0(1-\kappa^{-1})d}.
		\]
		If $0<t\leq t_\star$, then
		\[
		F(t)
		\leq
		C_0\kappa^{-1}t_\star
		=
		\frac{2C_0\log(\kappa d)}
		{c_0(1-\kappa^{-1})\kappa d}.
		\]
		If $t>t_\star$, then the exponential term defining $F$ is
		nonincreasing in~$t$, and hence
		\begin{align*}
			F(t)
			&\leq
			C_0\exp\left[
			-c_0d\min\{(1-\kappa^{-1})t_\star,1\}
			\right]\\
			&=
			C_0\max\left\{
			\frac{1}{(\kappa d)^2},
			e^{-c_0d}
			\right\}.
		\end{align*}
		Consequently,
		\begin{equation}\label{eq:minimax-F-upper}
			\begin{aligned}
				\sup_{t>0}F(t)
				&\leq
				C_0\max\left\{
				\frac{2\log(\kappa d)}
				{c_0(1-\kappa^{-1})\kappa d},
				\frac{1}{(\kappa d)^2},
				e^{-c_0d}
				\right\} \\
				&\leq C_0\max\left\{
				\frac{2\log(\kappa d)}
				{c_0(1-\kappa_0^{-1})\kappa d},
				\frac{1}{(\kappa d)^2},
				e^{-c_0d}
				\right\}.
			\end{aligned}
		\end{equation}
		Because $d\geq2$ and $\kappa>1$,
		\[
		\frac{1}{(\kappa d)^2}
		\leq
		C\frac{\log(\kappa d)}{\kappa d}
		\]
		for a universal constant~$C$.  Absorbing the factors depending
		only on $\kappa_0$ into~$C$, and changing the universal
		exponential constant if necessary, \eqref{eq:minimax-two-branches} and \eqref{eq:minimax-F-upper}
		prove \eqref{eq:minimax-fixed-step-ceiling}.
	\end{proof}

	\section{Proof of \cref{prop:overlap}}\label{app:rejection-overlap}
	
	This appendix proves \cref{prop:overlap}.
	The convexity assumption on \(U\) is not needed for
	\cref{prop:stationary-rejection,lem:linear-increment,lem:integrated-increments,lem:frozen-endpoint-law,lem:path-likelihood} below.
	For these results it suffices that \(U\in C^1(\R^d)\), that
	\(\nabla U\) be globally \(L\)-Lipschitz for some \(L>0\), and that
	\(0<\int_{\R^d}e^{-U(x)}\,\df x<\infty\), with \(\pi\) the normalized
	measure and \(X_0\sim\pi\). 
	\cref{prop:overlap}, proved at the end of
	this appendix, continues to use convexity.
	
	The main input is the following stationary estimate for the pointwise rejection function
	\[
	r_h(x)=1-\int\alpha_h(x,y)Q_h(x,\df y),
	\]
	where
	\[
	\alpha_h(x,y)
	=1\wedge
	\frac{e^{-U(y)}q_h(y,x)}{e^{-U(x)}q_h(x,y)},
	\qquad
	q_h(x,y)=(4\pi h)^{-d/2}
	\exp\left\{-\frac{\norm{y-x+h\nabla U(x)}^2}{4h}\right\}.
	\]

	\begin{proposition} \label{prop:stationary-rejection}
		There are universal constants \(c,C>0\) such that, for every real
		\(p\geq 1\),
		\begin{equation}\label{eq:stationary-rejection}
			h\leq\frac{c}{L\sqrt{p(d+p)}}
			\quad\Longrightarrow\quad
			\norm{r_h}_{L^p(\pi)}
			\leq
			CLh\sqrt{p(d+p)}.
		\end{equation}
	\end{proposition}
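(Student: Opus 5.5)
The plan follows the diffusion-comparison route sketched in the introduction. The starting point is the elementary bound $r_h(x)\leq\E_x[(R_h(x,Y))_-]$, where $R_h$ is the log Hastings ratio \eqref{eq:generic-log-Hastings-ratio} and $1-1\wedge e^{R}\leq R_-$. By Jensen's inequality, $r_h(x)^p\leq \E_x[(R_h(x,Y))_-^p]$. Since $X_0\sim\pi$ and $Y$ is the MALA proposal,
\[
\norm{r_h}_{L^p(\pi)}\leq \norm{(R_h(X_0,Y))_-}_{L^p}.
\]
A direct Taylor bound on $R_h$ is inadequate: it gives $|R_h|\lesssim L\norm{y-x}^2+h\norm{\nabla U}\cdot L\norm{y-x}+\dots$, which is of order $Lhd$, not $Lh\sqrt{d}$. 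To get the $\sqrt d$ scale we must exploit cancellation. Following the meet identity of \citet{Tierney1998,BilleraDiaconis2001}, we use
\[
\E_\pi[r_h]=\TV\text{-type quantity}=\tfrac12\E\,|1-e^{R_h}|,
\]
or more precisely the $L^p$ analogue. This reduces the problem to comparing the joint law of $(X_0,Y)$ with its time reversal, i.e., bounding a suitable $L^p$ norm of $1-e^{R_h(X_0,Y)}$ in terms of a quantity that is antisymmetric under swapping.

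Concretely, we will couple the proposal with the exact Langevin diffusion $\df X_s=-\nabla U(X_s)\df s+\sqrt2\,\df B_s$ started at $X_0\sim\pi$, driven by the same Brownian motion, over time $h$. The Euler step $Y$ is exactly the diffusion with drift frozen at $X_0$. The argument proceeds in four steps:
\begin{enumerate}[label=(\roman*)]
\item \emph{Stationary increment bound.} Show $\norm{\nabla U(X_s)-\nabla U(X_0)}_{L^{2p}}\lesssim L\sqrt{s(d+p)}$ and an integrated version. This is the Lyons--Zheng step: subtracting the forward SDE from the reversed one cancels the drift, so $X_s-X_0$ is a half-difference of two Brownian-type martingale terms. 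Gaussian moment bounds then give $\norm{X_s-X_0}_{L^{q}}\lesssim\sqrt{s(d+q)}$ with no drift contribution, which is the key gain.
\item \emph{Girsanov comparison.} Compare the path law of the exact diffusion with that of the frozen-drift process. The log-likelihood ratio is
\[
\int_0^h\ip{\nabla U(X_s)-\nabla U(X_0)}{\df B_s}/\sqrt2+\text{quadratic term},
\]
and its $L^p$ norm is of order $L\sqrt{h}\cdot\sqrt{h(d+p)}\cdot\sqrt p$ by BDG with constant $\sqrt p$ together with step (i). This yields $Lh\sqrt{p(d+p)}$.
\item \emph{Endpoint conditioning.} Conditioning on endpoints turns the path likelihood into the endpoint density ratio. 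Since the exact diffusion is $\pi$-reversible, its joint law of $(X_0,X_h)$ is symmetric, so
\[
e^{R_h(x,y)}=\frac{\pi(y)q_h(y,x)}{\pi(x)q_h(x,y)}
\]
equals the ratio of endpoint-likelihood comparisons in the forward and reversed directions. By conditional Jensen, both are controlled in $L^p$ by step (ii).
\item \emph{Conclusion.} Combine the pieces: $\norm{(R_h)_-}_{L^p}\lesssim$ the sum of the two log-likelihood $L^p$ norms, valid when these are $\leq c$. This requires the condition $h\leq c/(L\sqrt{p(d+p)})$, which is also used to keep exponential moments of the Girsanov density bounded, e.g., via Novikov-type bounds with $p$-dependent constants.
\end{enumerate}

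The main obstacle is step (iii), i.e., passing from path-space control to the pointwise ratio $e^{R_h(X_0,Y)}$ under the proposal law rather than the diffusion law, while keeping $p$-th moments. Both the change of measure and Jensen cost factors depending on exponential moments of the log-likelihood. The first attempt will be to apply Hölder with exponent $2p$ and absorb the resulting exponential moments using the step-size restriction. This reduces the problem to showing that the $2p$-th moment of the likelihood martingale stays bounded by a constant.
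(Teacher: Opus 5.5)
Your steps (i)--(iii) match the paper: the Lyons--Zheng drift cancellation, the Girsanov change to the frozen-drift measure under which $(X_0,X_h)\sim\pi(\df x)Q_h(x,\df y)$, the identity $F_h=\E[D_h\mid X_0,X_h]$, and $e^{R_h(x,y)}=F_h(y,x)/F_h(x,y)$ from symmetry of $\nu_h$. The two arguments part ways at the last step. Your route is viable but more laborious.

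\emph{Your final step.} You bound $\norm{r_h}_{L^p(\pi)}$ by $\norm{(R_h)_-}_{L^p(\mu_h)}$ under the proposal law. This forces a H\"older change of measure to the diffusion law, $\int (R_h)_-^p\,\df\mu_h=\int (R_h)_-^pF_h\,\df\nu_h\le\norm{(R_h)_-}_{L^{2p}(\nu_h)}^p\norm{F_h}_{L^2(\nu_h)}$. You then need two-sided control of $\log F_h$, via $(R_h)_-\le(\log F_h(y,x))_-+(\log F_h(x,y))_+$. Be explicit that conditional Jensen handles only the negative part, $(\log F_h)_-\le\E[(\log D_h)_-\mid X_0,X_h]$, because $\log$ is concave. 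The positive part must go through $(\log F_h)_+\le\abs{F_h-1}$ and $\norm{F_h-1}_{L^{2p}(\nu_h)}\le\norm{D_h-1}_{L^{2p}(\mathbb P)}$, which in turn needs a bound on $\norm{D_h}_{L^{4p}}$. All of this works under the same step-size condition after shrinking $c$.

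\emph{The paper's final step.} It removes the obstacle you single out. It disintegrates $\mu_h=F_h\nu_h$ as $Q_h(x,\df y)=F_h(x,y)S_h(x,\df y)$ and uses the meet identity $F_h\alpha_h=\min\{F_h(x,y),F_h(y,x)\}$. This gives $r_h(x)=\int[1-\min\{F_h(x,y),F_h(y,x)\}]S_h(x,\df y)\le\int(\abs{F_h(x,y)-1}+\abs{F_h(y,x)-1})S_h(x,\df y)$. Since the integral is now against the exact kernel $S_h$, Jensen and symmetry give $\norm{r_h}_{L^p(\pi)}\le2\norm{F_h-1}_{L^p(\nu_h)}\le2\norm{D_h-1}_{L^p(\mathbb P)}$, with no logarithms and no extra change of measure. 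The paper then bounds $\norm{D_h-1}_{L^p}$ via $D_t-1=\int_0^tD_s\ip{\theta_s}{\df B_s}$, BDG, H\"older and Doob, using $\norm{D_h}_{L^{2p}}\le C$.

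\emph{Trade-off.} Your version estimates the log-likelihood $M_h-V_h/2$ directly by BDG. It pays for this with higher moments and two-sided log control.

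\emph{Exponential moments.} Your step (i) gives polynomial moments via Minkowski. Novikov's criterion and the $D_h$-moment bound need exponential moments of $V_h\le L^2J_h/2$. You can recover these by summing the series, since your moment bounds hold for all $q$ with linear growth in $q$. The paper instead obtains the moment generating function of $J_h$ directly by Gaussian randomization.
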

	
	The proof of \cref{prop:stationary-rejection} compares the exact stationary Langevin path with the path whose
	drift is frozen at its initial position.  The endpoint of the latter path
	is exactly the MALA proposal.  Similar diffusion-comparison ideas appear in
	\citet{BouRabeeVandenEijnden2010,BouRabeeHairer2013,ChewiEtAl2021}.
	
	The idea behind the proof is roughly as follows.
	Let $X_0 \sim \pi$, and suppose that, on a filtered probability space $(\Omega,\mathcal F,(\mathcal F_t)_{t\geq0},\mathbb P)$, $X_t$ solves the Langevin equation
	\begin{equation} \label{eq:langevin}
		\df X_t = -\nabla U(X_t) \, \df t + \sqrt{2} \df B_t,
	\end{equation}
	where $B_t$ is a Brownian motion.
	Standard theory of Langevin diffusion implies $X_h \sim \pi$.
	Denote by $S_h$ the conditional distribution of $X_h$ given $X_0$.
	We compare $S_h(x, \df y)$ to the proposal $Q_h(x, \df y)$, which gives the Gaussian law $N_d(x - h \nabla U(x), 2h I_d)$.
	Define $\nu_h(\df x, \df y) = \pi(\df x) S_h(x, \df y)$, $\mu_h(\df x, \df y) = \pi(\df x) Q_h(x, \df y)$.
	Assume that the Radon-Nikodym derivative $F_h = \df \mu_h/\df \nu_h$ can be found.
	Since the Langevin diffusion is $\pi$-reversible, it holds that $\nu_h$ is symmetric, and
	\[
	\frac{\df \mu_h^{\top}}{\df \mu_h}(x,y) = \frac{F_h(y,x)}{F_h(x,y)},
	\]
	where $\mu_h^{\top}(\df x, \df y) = \mu_h(\df y, \df x)$.
	Moreover, it can be argued that, for $\pi$-a.e. $x$, $F_h(x, \cdot) = \df Q_h(x,\cdot)/\df S_h(x,\cdot)$.
	Then
	\[
	\begin{aligned}
		r_h(x) &= \int S_h(x, \df y) \left[ 1 - F_h(x,y) \min\left\{ 1, \frac{F_h(y,x)}{F_h(x,y)} \right\} \right]  \\
		&\leq \int(|F_h(x,y) - 1| + |F_h(y,x) - 1|) \, S_h(x, \df y).
	\end{aligned}
	\]
	Using Jensen's inequality and the fact that $\nu_h$ is symmetric, one can now show that $\|r_h\|_{L^p(\pi)}$ can be controlled by $2 \|F_h - 1\|_{L^p(\nu_h)}$.
	
	To find $F_h$, let 
	\[
	\df W_t = \df B_t - \theta_t \, \df t, \quad \theta_t = \frac{\nabla U(X_t) - \nabla U(X_0)}{\sqrt{2}}
	\]
	Then $X_t$ has the frozen drift representation $\df X_t = - \nabla U(X_0) \df t + \sqrt{2} \df W_t$.
	Suppose that one can find an alternative probability measure $\mathbb P_h^{\mathrm{fr}}$ defined by a Radon-Nikodym derivative $D_h = \df \mathbb P_h^{\mathrm{fr}}/\df \mathbb P|_{\mathcal{F}_h}$ such that, under $\mathbb P_h^{\mathrm{fr}}$, $W_t$ is a Brownian motion with respect to $(\mathcal{F}_t)_{0 \leq t \leq h}$.
	Then, under this new probability measure, $(X_0, X_h)$ is distributed as $\mu_h$.
	In particular, with expectations defined in terms of $\mathbb{P}$, $F_h(X_0, X_h) = \mathbb{E}[D_h \mid \sigma(X_0, X_h)]$, so conditional Jensen's inequality would imply that $\|F_h - 1\|_{L^p(\nu_h)} \leq \|D_h - 1\|_{L^p(\mathbb{P})}$.
	As stated in \cref{lem:frozen-endpoint-law}, one can construct the desired measure $\mathbb{P}_h^{\mathrm{fr}}$ by letting $D_h$ be defined through the stochastic exponential
	\[
	D_t = \exp\left( \int_0^t \langle \theta_s, \df B_s \rangle - \frac{1}{2} V_t \right), \quad V_t = \int_0^t \|\theta_s\|^2 \, \df s,
	\]
	provided that $(D_t)_{0 \leq t \leq h}$ is a true martingale.
	This is a consequence of  Girsanov's theorem \citep[see, e.g.,][]{KaratzasShreve1991,RevuzYor1999}.
	Therefore, to prove \cref{prop:stationary-rejection}, it comes down to two tasks: showing that $D_t$ is a true martingale, and that $\|D_h - 1\|_{L^p(\mathbb{P})} \leq C L h \sqrt{p(d+p)}$ when $h \leq c/(L\sqrt{p(d+p)})$, where $c, C$ are appropriate universal constants.
	
	To show that $D_t$ is a martingale, it suffices to establish Novikov's criterion: $\mathbb{E} (e^{V_h/2}) < \infty$.
	Since $\nabla U$ is $L$-Lipschitz, one can bound $V_h$ by $L^2 J_h/2$, where $J_h = \int_0^h \|X_t - X_0\|^2 \, \df t$.
	This prompts us to bound $\mathbb{E}(e^{\lambda J_h})$ for some suitable $\lambda \geq 0$ in \cref{lem:integrated-increments}.
	
	We obtain the upper bound $\|D_h - 1\|_{L^p(\mathbb{P})} \leq C L h \sqrt{p(d+p)}$ for $h \leq c/(L\sqrt{p(d+p)})$ in \cref{lem:path-likelihood}.
	In the proof of \cref{lem:path-likelihood}, we apply It\^o's formula, Burkholder-Davis-Gundy (BDG) inequality, and Doob's maximal inequality to obtain what is loosely
	\[
	\|D_h - 1\|_{L^p(\mathbb{P})} \leq C' \sqrt{p} \, \|V_h\|_{L^p(\mathbb{P})}^{1/2} \leq 2^{-1/2} C' L \sqrt{p} \, \|J_h\|_{L^p(\mathbb{P})}^{1/2},
	\]
	where $C'$ is universal.
	This prompts us to study $\|J_h\|_{L^p(\mathbb{P})}$ and obtain an upper bound that is of order $h^2(d+p)$ in \cref{lem:integrated-increments}.
	
	To establish the bounds in \cref{lem:integrated-increments}, we first
	control \(X_t-X_0\) using stationary reversibility in
	\cref{lem:linear-increment}, without explicitly solving the
	Langevin equation.

	Once \cref{prop:stationary-rejection} is established, we can utilize elementary techniques in the MCMC literature \citep[see, e.g.,][proof of Theorem~3]{WuSchmidlerChen2022} to establish \cref{prop:overlap}.
	
	\paragraph{Attribution within this appendix.}
	The estimate in \cref{lem:linear-increment} is an elementary consequence
	of the Lyons--Zheng forward--backward decomposition
	\citep[Section~1, especially equation~(1.7)]{LyonsZheng1988}; see also
	\citet[Theorem~5.7.1]{FukushimaOshimaTakeda2011}.
	For the Langevin diffusion considered here, the required decomposition
	follows directly from the stochastic differential equation and stationary
	reversibility, as shown below.
	\cref{lem:integrated-increments} then follows from that estimate,
	Gaussian randomization, and Jensen's inequality.
	The Gaussian-randomization step is the identity-matrix specialization
	of the argument used for sub-Gaussian quadratic forms by
	\citet[Theorem~1 and Remark~2]{HsuKakadeZhang2012}.
	\cref{lem:path-likelihood} combines standard results on stochastic
	exponentials, Novikov's criterion, Girsanov's theorem, stopping of
	stochastic integrals, Doob's inequality, and the quantitative BDG
	inequality, with the problem-specific estimate \(V_h\leq L^2J_h/2\).
	General reverse-H\"older theory for continuous stochastic exponentials
	is developed by \citet{Kazamaki1994}, but the dimension- and
	moment-explicit calculation needed here is supplied in full.
	The passage from the path likelihood to the rejection probability uses
	the standard conditional-likelihood identity and the
	Metropolis--Hastings meet identity
	\citep{Kallenberg2021,Tierney1998,BilleraDiaconis2001}.
	Thus \cref{prop:stationary-rejection} combines these ingredients for
	the present MALA problem.
	
	\subsection{Controlling Langevin increments}
	
	The goal of this subsection is to control the magnitude of \(X_t-X_0\),
	particularly the time integral
	\(\int_0^h\norm{X_t-X_0}^2\,\df t\), where \(X_t\) satisfies
	\eqref{eq:langevin}.
	
	We begin by fixing the continuous-time framework used throughout this
	appendix. All random variables are defined on a filtered probability
	space
	\[
	(\Omega,\mathcal F,(\mathcal F_t)_{t\geq0},\mathbb P)
	\]
	satisfying the usual completeness and right-continuity conditions.
	A standard \(d\)-dimensional Brownian motion relative to this filtration
	is an adapted continuous process \(B\) with \(B_0=0\) such that, for
	\(0\leq s<t\), the increment \(B_t-B_s\) has law
	\(N_d(0,(t-s)I_d)\) and is independent of \(\mathcal F_s\).
	This last independence property will be used explicitly in
	\cref{ssec:girsanov-frozen}.
	Basic references for filtered probability spaces, Brownian motion,
	It\^o integration, and stochastic differential equations are
	\citet[Chapters~3 and~5]{KaratzasShreve1991},
	\citet[Chapters~IV and~IX]{RevuzYor1999}, and
	\citet{Kallenberg2021}.
	
	Because \(\nabla U\) is globally Lipschitz, for every \(x\in\R^d\)
	the Langevin equation has a unique nonexplosive strong solution
	\((X_t^x)_{t\geq0}\) with \(X_0^x=x\); see, for example,
	\citet[Chapter~5]{KaratzasShreve1991}.
	For a bounded measurable function \(f\), define the transition
	semigroup by
	\[
	S_tf(x)=\E[f(X_t^x)].
	\]
	
	Now let \(X_0\) be \(\mathcal F_0\)-measurable and independent of \(B\),
	with law \(\pi\), and let \((X_t)_{t\geq0}\) be the corresponding
	solution of
	\begin{equation}\label{eq:langevin-sde}
		\df X_t=-\nabla U(X_t)\,\df t+\sqrt{2}\,\df B_t,
		\qquad X_0\sim\pi.
	\end{equation}
	The differential notation means the integral identity
	\begin{equation}\label{eq:langevin-integral}
		X_t=X_0-\int_0^t\nabla U(X_s)\,\df s+\sqrt{2}\,B_t,
		\qquad t\geq0.
	\end{equation}
	Under the assumptions stated at the beginning of this appendix,
	the Langevin semigroup is symmetric with respect to
	\(\pi(\df x)\propto e^{-U(x)}\,\df x\):
	\[
	\int fS_tg\,\df\pi=\int gS_tf\,\df\pi
	\]
	for bounded measurable \(f,g\) and \(t\geq0\); see
	\citet[Section~2]{DurmusMoulines2017} and
	\citet[Chapters~1 and~7]{FukushimaOshimaTakeda2011}.
	Since \(S_t1=1\), taking \(f=1\) shows that \(\pi\) is invariant.
	Consequently, the process started from \(X_0\sim\pi\) is stationary
	and reversible.
	
	\begin{lemma}\label{lem:linear-increment}
		For every \(v\in\R^d\), \(\lambda\in\R\), and \(t\geq0\),
		\begin{equation}\label{eq:linear-increment-mgf}
			\E[\exp(\lambda\ip{v}{X_t-X_0})]
			\leq
			\exp(\lambda^2t\norm{v}^2).
		\end{equation}
	\end{lemma}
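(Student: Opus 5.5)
We will use the Lyons--Zheng forward--backward decomposition, which the paper indicates, to cancel the drift. Fix $t>0$ and consider the time-reversed process $\widehat X_s=X_{t-s}$, $0\leq s\leq t$. By stationarity and reversibility of the Langevin semigroup, $(\widehat X_s)_{0\le s\le t}$ has the same law as $(X_s)_{0\le s\le t}$. We can therefore realize it as a weak solution of the same SDE, $\widehat X_s=\widehat X_0-\int_0^s\nabla U(\widehat X_r)\,\df r+\sqrt2\,\widehat B_s$, where $\widehat B$ is a Brownian motion in the reversed filtration. The reversed Brownian motion can be defined explicitly as $\sqrt2\,\widehat B_s:=\widehat X_s-\widehat X_0+\int_0^s\nabla U(\widehat X_r)\df r$. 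That this is a Brownian motion follows from the equality of path laws: the map sending a path to $x_s-x_0+\int_0^s\nabla U(x_r)\df r$ is measurable, so $\widehat B$ has the law of $B$.

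Next, evaluate the forward equation at time $t$ and the backward equation at $s=t$:
\[
X_t-X_0=-\int_0^t\nabla U(X_r)\,\df r+\sqrt2 B_t,\qquad X_0-X_t=-\int_0^t\nabla U(X_{r})\,\df r+\sqrt2\,\widehat B_t,
\]
where the integral has been reindexed by $r\mapsto t-r$. Subtracting removes the drift integral:
\[
X_t-X_0=\frac{\sqrt2}{2}\bigl(B_t-\widehat B_t\bigr)=\frac{1}{\sqrt2}\bigl(B_t-\widehat B_t\bigr).
\]
Here $B_t$ and $\widehat B_t$ are each $N_d(0,tI_d)$, but they are dependent.

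Fix $v$ and $\lambda$. By the Cauchy--Schwarz inequality,
\[
\E e^{\lambda\ip{v}{X_t-X_0}}\le\bigl(\E e^{\sqrt2\lambda\ip{v}{B_t}}\bigr)^{1/2}\bigl(\E e^{-\sqrt2\lambda\ip{v}{\widehat B_t}}\bigr)^{1/2}.
\]
Since $\ip{v}{B_t}\sim N(0,t\norm v^2)$, each factor equals $\exp(\lambda^2t\norm v^2)$ before the square root is taken. The right-hand side is therefore $\exp(\lambda^2t\norm v^2)$, which is \eqref{eq:linear-increment-mgf}. The case $t=0$ is trivial.

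The main obstacle is making the reversed-noise step rigorous without appealing to general Dirichlet-form theory, that is, justifying that $\widehat B$ is a genuine Brownian motion. The plan above avoids filtrations entirely, because only the marginal law of $\widehat B_t$ at the single time $t$ is used. It suffices to know that $(X_{t-s})_{s\le t}\overset{d}{=}(X_s)_{s\le t}$ as path laws on $C([0,t];\R^d)$, and that $B_t$ is a measurable functional $\Psi$ of the path, namely $B_t=\bigl(x_t-x_0+\int_0^t\nabla U(x_r)\df r\bigr)/\sqrt2$. Then $\widehat B_t:=\Psi(\widehat X)\overset{d}{=}\Psi(X)=B_t$. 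Equality of path laws follows from symmetry of $S_t$ with respect to $\pi$, via finite-dimensional distributions: these are computed with the Markov property and the identity $\int fS_tg\,\df\pi=\int gS_tf\,\df\pi$, iterated over time points, and path continuity then identifies the laws.
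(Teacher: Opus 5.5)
Your proof is correct and is essentially the paper's argument: the paper defines the same reversed quantity $Z_t = X_0 - X_t + \int_0^t \nabla U(X_s)\,\df s$ (your $\sqrt2\,\widehat B_t$), identifies its law as $N_d(0,2tI_d)$ from the equality of forward and reversed path laws, and subtracts to cancel the drift. The only difference is that the paper bounds the resulting exponential using convexity, $e^{(a+b)/2}\le (e^a+e^b)/2$, where you use Cauchy--Schwarz; both give the same bound without needing any independence between $B_t$ and $\widehat B_t$.
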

	
	\begin{proof}
		The case \(t=0\) is immediate. Fix \(t>0\).
		Stationarity and reversibility give
		\[
		(X_s)_{0\leq s\leq t}
		\stackrel{\mathrm d}{=}
		(X_{t-s})_{0\leq s\leq t}
		\]
		as random elements of \(C([0,t];\R^d)\).
		Set
		\[
		Z_t=X_0-X_t+\int_0^t\nabla U(X_s)\,\df s.
		\]
		Reversing the path interchanges \(X_0\) and \(X_t\), while leaving
		the time integral unchanged, since
		\[
		\int_0^t\nabla U(X_{t-s})\,\df s
		=
		\int_0^t\nabla U(X_s)\,\df s.
		\]
		The equality of path laws and \eqref{eq:langevin-integral} therefore give
		\[
		Z_t
		\stackrel{\mathrm d}{=}
		X_t-X_0+\int_0^t\nabla U(X_s)\,\df s
		=
		\sqrt{2}\,B_t
		\sim N_d(0,2tI_d).
		\]
		Moreover, subtracting the definition of \(Z_t\) from the preceding
		expression for \(\sqrt{2}\,B_t\) yields
		\[
		X_t-X_0=\frac{\sqrt{2}\,B_t-Z_t}{2}.
		\]
		Thus the drift integral cancels when the forward and reversed
		expressions are subtracted.
		
		By convexity of the exponential function,
		\begin{align*}
			\E[\exp(\lambda\ip{v}{X_t-X_0})]
			&=
			\E\left[
			\exp\left(\frac{\lambda}{2}
			\ip{v}{\sqrt{2}\,B_t-Z_t}\right)
			\right]\\
			&\leq
			\frac12\E[\exp(\sqrt{2}\lambda\ip{v}{B_t})]
			+
			\frac12\E[\exp(-\lambda\ip{v}{Z_t})]\\
			&=
			\exp(\lambda^2t\norm{v}^2).
		\end{align*}
		The last equality uses the Gaussian moment-generating formula.
		No independence between \(B_t\) and \(Z_t\) is required, and the
		finite Gaussian exponential moments justify the inequality without
		any preliminary integrability assumption on its left-hand side.
	\end{proof}
	
	The next proof uses a standard Gaussian-randomization argument.  In a more
	general matrix form, it is the argument used by
	\citet[Theorem~1 and Remark~2]{HsuKakadeZhang2012} to pass from linear
	sub-Gaussian moment bounds to quadratic-form bounds.  We include the
	identity-matrix calculation because the exact constant in
	\eqref{eq:quadratic-increment-mgf} is useful below.
	
	\begin{lemma} \label{lem:integrated-increments}
		For every \(t>0\) and \(0\leq a<1/(4t)\),
		\begin{equation}\label{eq:quadratic-increment-mgf}
			\E[\exp(a\norm{X_t-X_0}^2)]
			\leq
			(1-4at)^{-d/2}.
		\end{equation}
		If
		\[
		J_h=\int_0^h\norm{X_t-X_0}^2\,\df t,
		\]
		then, for \(0\leq\lambda<1/(4h^2)\),
		\begin{equation}\label{eq:Jh-mgf}
			\E (e^{\lambda J_h})
			\leq
			(1-4\lambda h^2)^{-d/2}.
		\end{equation}
		Consequently, for every real \(p\geq1\),
		\begin{equation}\label{eq:Jh-moment}
			\norm{J_h}_{L^p(\mathbb P)}
			\leq
			C h^2(d+p),
		\end{equation}
		where \(C\) is universal.
	\end{lemma}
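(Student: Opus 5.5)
The plan has three stages: a Gaussian randomization applied to \cref{lem:linear-increment}, then Jensen's inequality in time, then moments read off from the moment generating function.

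\emph{Quadratic bound \eqref{eq:quadratic-increment-mgf}.} Let $G\sim N_d(0,I_d)$ be independent of the diffusion, and write $\Delta=X_t-X_0$. For fixed $\Delta$,
\[
\E_G\exp\bigl(\sqrt{2a}\,\ip{G}{\Delta}\bigr)=\exp(a\norm{\Delta}^2).
\]
Taking expectations and using Tonelli to swap the order of integration, we condition on $G$ and apply \cref{lem:linear-increment} with $v=G$ and $\lambda=\sqrt{2a}$. This gives
\[
\E\exp(a\norm{\Delta}^2)\leq \E_G\exp(2at\norm{G}^2)=(1-4at)^{-d/2},
\]
which is finite exactly when $4at<1$. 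All integrands are nonnegative, so no integrability assumption is needed beforehand.

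\emph{Bound \eqref{eq:Jh-mgf}.} Write
\[
\lambda J_h=\frac1h\int_0^h \lambda h\norm{X_t-X_0}^2\,\df t,
\]
an average over $t\in[0,h]$ with respect to the uniform probability measure. Jensen's inequality for the convex function $\exp$ gives
\[
e^{\lambda J_h}\leq \frac1h\int_0^h e^{\lambda h\norm{X_t-X_0}^2}\,\df t.
\]
Take expectations and apply the first part with $a=\lambda h$. This is admissible because $4at=4\lambda ht\leq4\lambda h^2<1$. The bound $(1-4\lambda ht)^{-d/2}$ is increasing in $t$, so it is at most $(1-4\lambda h^2)^{-d/2}$.

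\emph{Moments \eqref{eq:Jh-moment}.} Set $\lambda=1/(8h^2)$, so that $\E e^{\lambda J_h}\leq 2^{d/2}$. Use the elementary inequality $x^p\leq (p/e)^p e^{x}$ for $x\geq0$, applied to $x=\lambda J_h$. This yields
\[
\E J_h^p\leq \lambda^{-p}(p/e)^p\,2^{d/2},
\]
and hence
\[
\norm{J_h}_{L^p}\leq 8h^2\,(p/e)\,2^{d/(2p)}.
\]
This is fine when $p\gtrsim d$, but for small $p$ the factor $2^{d/(2p)}$ is too large. That regime is the only real subtlety.

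To handle it, optimize over $\lambda$ instead of fixing it. For $\lambda=s/(4h^2)$ with $s\in(0,1)$,
\[
\E J_h^p\leq (4h^2/s)^p (p/e)^p (1-s)^{-d/2}.
\]
Take $s=p/(p+d)$. Then
\[
(1-s)^{-d/(2p)}=\bigl(1+p/d\bigr)^{d/(2p)}\leq e^{1/2},
\]
and
\[
\norm{J_h}_{L^p}\leq 4h^2\,\frac{p+d}{p}\,\frac{p}{e}\,e^{1/2}\leq 4h^2(d+p).
\]
This gives \eqref{eq:Jh-moment} with a universal $C$. The main obstacle is exactly this choice of $s$ depending on $p/d$, which is what produces the $d+p$ scaling uniformly in $p\geq1$.
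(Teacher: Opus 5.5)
Your proof is correct. The first two stages coincide with the paper's argument: Gaussian randomization of \cref{lem:linear-increment} with $v=G$ and $\lambda=\sqrt{2a}$, followed by Jensen's inequality over the uniform measure on $[0,h]$.

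The moment stage follows a different route.
\begin{itemize}
\item The paper fixes $\lambda=(8h^2)^{-1}$ and uses Markov's inequality to get the tail bound $\mathbb P(J_h\geq 4dh^2\log 2+8h^2s)\leq e^{-s}$. It then bounds the shifted excess $Y=(J_h-4dh^2\log 2)_+/(8h^2)$ in $L^p$ via the layer-cake formula and $\Gamma(p+1)^{1/p}\leq Cp$, and finishes with Minkowski. Subtracting the bulk term $4dh^2\log 2$ before taking moments is how the paper avoids the $2^{d/(2p)}$ blow-up you noticed.
\item You instead leave the Chernoff parameter free, choose $s=p/(p+d)$ in $\lambda=s/(4h^2)$, and use $x^p\leq(p/e)^pe^x$. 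The arithmetic checks out, since $(1+p/d)^{d/(2p)}\leq e^{1/2}$. This gives $\norm{J_h}_{L^p(\mathbb P)}\leq 4e^{-1/2}h^2(d+p)$ with an explicit constant and no Stirling estimate.
\end{itemize}
Your route is shorter and fully explicit. The paper's route displays $J_h$ as a deterministic $O(dh^2)$ bulk plus a sub-exponential fluctuation at scale $h^2$, which is the more reusable form if you want tail bounds and not just moments.
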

	
	\begin{proof}
		Enlarge the probability space,
		if necessary, so that it also carries a random vector
		\(G\sim N_d(0,I_d)\) independent of the entire Langevin path
		\((X_s)_{s\geq0}\).  For a fixed vector \(z\in\R^d\), the scalar
		\(\ip{G}{z}\) is Gaussian with mean zero and variance \(\norm{z}^2\).
		The Gaussian moment-generating formula therefore gives
		\begin{equation*}
			\E\!
			\left(
			e^{\sqrt{2a}\ip{G}{z}}
			\right)
			=e^{a\norm{z}^2}.
		\end{equation*}
		Apply this identity with the random vector \(z=X_t-X_0\).  More precisely,
		conditioning first on the Langevin path gives
		\begin{align*}
			\E \left(e^{a\norm{X_t-X_0}^2} \right)
			&=\E\left\{
			\E\left[
			e^{\sqrt{2a}\ip{G}{X_t-X_0}}
			\mathrel{\big|}\sigma(X_s:s\geq0)
			\right]
			\right\}.
		\end{align*}
		The integrand is nonnegative, so Tonelli's theorem allows us to reverse the
		order in which the path and \(G\) are averaged.  Conditional on \(G\), the
		vector \(G\) is fixed, and \cref{lem:linear-increment}, with
		\(v=G\) and \(\lambda=\sqrt{2a}\), gives
		\[
		\E\left(
		e^{\sqrt{2a}\ip{G}{X_t-X_0}}
		\mathrel{\big|}G
		\right)
		\leq e^{2at\norm{G}^2}.
		\]
		Consequently,
		\begin{equation*}
			\begin{aligned}
				\E \left(e^{a\norm{X_t-X_0}^2} \right)
				&\leq \E \left(e^{2at\norm{G}^2} \right)\\
				&=(1-4at)^{-d/2},
			\end{aligned}
		\end{equation*}
		where the last equality is the moment-generating function of a
		\(\chi_d^2\) random variable and requires \(4at<1\).  This proves
		\eqref{eq:quadratic-increment-mgf}.
		
		We next pass from one increment to its time integral.  For each fixed sample
		path, Jensen's inequality for the convex function \(x\mapsto e^x\), applied
		to the uniform probability measure \(\df t/h\) on \([0,h]\), gives
		\begin{align*}
			e^{\lambda J_h}
			&=\exp\left(
			\frac1h\int_0^h \lambda h\norm{X_t-X_0}^2\,\df t
			\right)\\
			&\leq
			\frac1h\int_0^h
			e^{\lambda h\norm{X_t-X_0}^2}\,\df t.
		\end{align*}
		Take expectations and use Tonelli's theorem, followed by
		\eqref{eq:quadratic-increment-mgf} with \(a=\lambda h\):
		\begin{align*}
			\E (e^{\lambda J_h})
			&\leq
			\frac1h\int_0^h
			\E (e^{\lambda h\norm{X_t-X_0}^2}) \,\df t\\
			&\leq
			\frac1h\int_0^h(1-4\lambda ht)^{-d/2}\,\df t\\
			&\leq(1-4\lambda h^2)^{-d/2}.
		\end{align*}
		This proves \eqref{eq:Jh-mgf}.
		
		It remains to derive the \(L^p\) estimate from this exponential-tail bound.
		Put \(\lambda=(8h^2)^{-1}\) in
		\eqref{eq:Jh-mgf}.  Markov's inequality gives, for every \(s\geq0\),
		\begin{equation*}
			\mathbb P\left(
			J_h\geq4dh^2\log 2+8h^2s
			\right)
			\leq e^{-s}.
		\end{equation*}
		Define
		\[
		Y=\frac{(J_h-4dh^2\log 2)_+}{8h^2}.
		\]
		Then \(\mathbb P(Y>s)\leq e^{-s}\).  For every real \(p\geq1\), the
		layer-cake formula for moments gives
		\begin{align*}
			\E (Y^p)
			&=p\int_0^\infty s^{p-1}\mathbb P(Y>s)\,\df s\\
			&\leq p\int_0^\infty s^{p-1}e^{-s}\,\df s
			=\Gamma(p+1).
		\end{align*}
		Stirling's bound implies \(\Gamma(p+1)^{1/p}\leq Cp\) for all
		\(p\geq1\).  Hence, by the Minkowski inequality,
		\[
		\norm{J_h}_{L^p(\mathbb P)}
		\leq4dh^2\log 2+8h^2\norm{Y}_{L^p(\mathbb P)}
		\leq Ch^2(d+p),
		\]
		which proves \eqref{eq:Jh-moment}.  For the tail-integration identity and the
		Gamma-function estimate, see
		\citet[Chapter~2]{BoucheronLugosiMassart2013}.
	\end{proof}
	
	\subsection{Girsanov comparison with frozen drift}\label{ssec:girsanov-frozen}
	
	We compare two laws on the same path space over \([0,h]\).  Under the
	probability measure \(\mathbb P\), the path satisfies the exact
	Langevin equation \eqref{eq:langevin-sde}.  We shall construct a second
	measure under which the coordinate process has drift frozen at the initial
	value \(-\nabla U(X_0)\).
	
	We recall the continuous-martingale facts used below.  If \(H_t\) is a
	predictable \(\R^d\)-valued process with
	\(\int_0^h\norm{H_t}^2\,\df t<\infty\) almost surely, then
	\(\int_0^t\ip{H_s}{\df B_s}\) is a continuous local martingale.  Its quadratic
	variation is
	\[
	\left[
	\int_0^\cdot\ip{H_s}{\df B_s}
	\right]_t
	=\int_0^t\norm{H_s}^2\,\df s.
	\]
	For a continuous
	local martingale \(N\) with \(N_0=0\), the stochastic exponential is
	\[
	\mathcal E(N)_t=\exp(N_t-[N]_t/2).
	\]
	It is always a nonnegative local martingale, but it need not have expectation
	one.  Novikov's criterion states that
	\(\E\exp([N]_h/2)<\infty\) is sufficient for
	\((\mathcal E(N)_t)_{0\leq t\leq h}\) to be a true mean-one martingale.
	These statements, together with It\^o's formula and the stopping arguments
	used below, are standard; see \citet[Chapter~3]{KaratzasShreve1991},
	\citet[Chapters~IV and~VIII]{RevuzYor1999}, or
	\citet{Kallenberg2021}.
	
	Define
	\begin{equation*}
		\theta_t=
		\frac{\nabla U(X_t)-\nabla U(X_0)}{\sqrt{2}},
		\qquad
		V_t=\int_0^t\norm{\theta_s}^2\,\df s,
		\qquad
		M_t=\int_0^t\ip{\theta_s}{\df B_s}.
	\end{equation*}
	The process \(\theta_t\) is continuous and adapted, hence predictable.  Since
	\(X\) has continuous paths, \(\int_0^h\norm{\theta_s}^2\,\df s<\infty\) almost
	surely on every finite interval, so the It\^o integral $M_t$ is well defined as a
	continuous local \(\mathbb P\)-martingale.  Its quadratic variation is
	\([M]_t=V_t\).  The Lipschitz property of \(\nabla U\) and
	\cref{lem:integrated-increments} give
	\begin{equation}\label{eq:V-J}
		V_h
		\leq
		\frac{L^2}{2}J_h,
		\qquad
		\norm{V_h}_{L^p(\mathbb P)}
		\leq
		C L^2h^2(d+p)
		\; \text{whenever }p\geq1.
	\end{equation}
	Define the stochastic exponential
	\begin{equation*}
		D_t=\exp\left( M_t-\frac12V_t\right) = \mathcal E(M)_t.
	\end{equation*}
	
	The following lemma collects the change-of-measure construction that will be used in the proof of
	\cref{prop:stationary-rejection}.
	
	\begin{lemma}
		\label{lem:frozen-endpoint-law}
		Assume that \(h>0\) and \(Lh<1\).  Then
		\((D_t)_{0\leq t\leq h}\) is a true mean-one \(\mathbb P\)-martingale, and
		\begin{equation}\label{eq:frozen-measure-definition}
			\frac{\df\mathbb P_h^{\mathrm{fr}}}{\df\mathbb P}
			\bigg|_{\mathcal F_h}=D_h
		\end{equation}
		defines a probability measure \(\mathbb P_h^{\mathrm{fr}}\) on \((\Omega,\mathcal F_h)\).  Under
		\(\mathbb P_h^{\mathrm{fr}}\), the process
		\[
		W_t=B_t-\int_0^t\theta_s\,\df s,
		\qquad 0\leq t\leq h,
		\]
		is a Brownian motion relative to
		\((\mathcal F_t)_{0\leq t\leq h}\), and
		\begin{equation}\label{eq:frozen-path}
			X_t=X_0-t\nabla U(X_0)+\sqrt{2}\,W_t,
			\qquad 0\leq t\leq h.
		\end{equation}
		Moreover, under \(\mathbb P_h^{\mathrm{fr}}\), \(X_0\sim\pi\), and a version
		of the conditional law of \(X_h\) given \(X_0=x\) is \(Q_h(x,\cdot)\).
		It follows that, for all Borel sets \(A,B\subseteq\R^d\),
		\begin{equation}\label{eq:frozen-endpoint-law}
			\mathbb P_h^{\mathrm{fr}}(X_0\in A,X_h\in B)
			=\int_A Q_h(x,B)\,\pi(\df x).
		\end{equation}
	\end{lemma}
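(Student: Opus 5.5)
The plan is in three steps: Novikov's criterion, then Girsanov's theorem, then a direct computation of the joint law of $(X_0,X_h)$ under the new measure.

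\emph{Step 1 (true martingale).} I will verify Novikov's criterion $\E\exp(V_h/2)<\infty$. By \eqref{eq:V-J}, $V_h\leq L^2J_h/2$, so it suffices that $\E\exp(L^2J_h/4)<\infty$. By \eqref{eq:Jh-mgf} this holds whenever $\lambda=L^2/4<1/(4h^2)$, that is, whenever $Lh<1$, which is exactly the hypothesis. Hence $D=\mathcal E(M)$ is a true mean-one martingale on $[0,h]$. Consequently $D_h\geq0$ with $\E D_h=1$, so \eqref{eq:frozen-measure-definition} defines a probability measure on $\mathcal F_h$. The same bound holds for every $t\leq h$, because $V_t\leq V_h$.

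\emph{Step 2 (Girsanov).} Girsanov's theorem (e.g.\ Karatzas--Shreve, Theorem~3.5.1) applies since $D$ is a true martingale. It gives that under $\mathbb P_h^{\mathrm{fr}}$ the process $W_t=B_t-\int_0^t\theta_s\,\df s=B_t-[B,M]_t$ is a continuous local martingale with quadratic variation $tI_d$. By L\'evy's characterization, $W$ is therefore an $(\mathcal F_t)$-Brownian motion on $[0,h]$. Substituting $B_t=W_t+\int_0^t\theta_s\,\df s$ into \eqref{eq:langevin-integral} makes the $\nabla U(X_s)$ drift terms cancel against $\sqrt2\,\theta_s=\nabla U(X_s)-\nabla U(X_0)$. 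What remains is \eqref{eq:frozen-path}.

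\emph{Step 3 (endpoint law).} First I identify the law of $X_0$. Since $X_0$ is $\mathcal F_0$-measurable and $D$ is a martingale with $D_0=1$, we get $\mathbb P_h^{\mathrm{fr}}(X_0\in A)=\E[\1_A(X_0)\E(D_h\mid\mathcal F_0)]=\pi(A)$. Next, under $\mathbb P_h^{\mathrm{fr}}$, $W_h$ is independent of $\mathcal F_0$ with law $N_d(0,hI_d)$; this is the increment-independence property of a Brownian motion relative to the filtration, applied with $s=0$. Then $X_h=X_0-h\nabla U(X_0)+\sqrt2W_h$, where $X_0$ is $\mathcal F_0$-measurable and the noise is independent of it. The conditional law of $X_h$ given $X_0=x$ is therefore $N_d(x-h\nabla U(x),2hI_d)=Q_h(x,\cdot)$. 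Integrating this against $\pi$ yields \eqref{eq:frozen-endpoint-law}.

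The main obstacle is Step 1, and it is already handled by the moment-generating bound \eqref{eq:Jh-mgf}; the rest is standard stochastic calculus. The one remaining point of care is the precise filtration version of Girsanov and L\'evy on a finite horizon, where I will note that $\mathbb P_h^{\mathrm{fr}}$ is defined only on $\mathcal F_h$.
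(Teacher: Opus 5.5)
Your proposal is correct and follows the paper's proof essentially step for step. You use Novikov's criterion via $V_h\leq L^2J_h/2$ and \eqref{eq:Jh-mgf} with $\lambda=L^2/4<1/(4h^2)$, then Girsanov's theorem with substitution into \eqref{eq:langevin-integral}, then $\E(D_h\mid\mathcal F_0)=1$ and the independence of $W_h$ from $\mathcal F_0$ to identify the endpoint law. The only cosmetic difference is that you pass through L\'evy's characterization to conclude that $W$ is a Brownian motion, whereas the paper cites Girsanov's theorem for that conclusion directly.
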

	
	\begin{proof}
		By \eqref{eq:Jh-mgf} and \eqref{eq:V-J},
		\begin{align*}
			\E\left[\exp\left([M]_h/2\right)\right]
			&=\E\left[\exp\left(V_h/2\right)\right]\\
			&\leq\E\left[\exp\left(L^2J_h/4\right)\right]\\
			&\leq(1-L^2h^2)^{-d/2}<\infty,
		\end{align*}
		where the last inequality uses \(Lh<1\).  Novikov's criterion therefore
		shows that \(D\) is a true mean-one \(\mathbb P\)-martingale.  In
		particular,
		\begin{equation}\label{eq:D-conditional-martingale}
			\E(D_t\mid\mathcal F_s)=D_s,
			\qquad 0\leq s\leq t\leq h,
		\end{equation}
		and \eqref{eq:frozen-measure-definition} defines a probability measure.
		
		Because $D$ is a true martingale, Girsanov's theorem gives the asserted Brownian motion \(W_t\); see
		\citet[Section~3.5]{KaratzasShreve1991} or
		\citet[Chapter~VIII, Section~1]{RevuzYor1999}.  Substituting
		\[
		B_t=W_t+\int_0^t\theta_s\,\df s
		\]
		into \eqref{eq:langevin-integral} yields
		\begin{align*}
			X_t
			&=X_0-\int_0^t\nabla U(X_s)\,\df s
			+\sqrt{2}\,W_t
			+\sqrt{2}\int_0^t\theta_s\,\df s\\
			&=X_0-t\nabla U(X_0)+\sqrt{2}\,W_t,
		\end{align*}
		which proves \eqref{eq:frozen-path}.
		
		For every Borel set \(A\subseteq\R^d\),
		\begin{align*}
			\mathbb P_h^{\mathrm{fr}}(X_0\in A)
			&=\E\left(D_h\1_{\{X_0\in A\}}\right)\\
			&=\E\left[
			\E(D_h\mid\mathcal F_0)\1_{\{X_0\in A\}}
			\right]\\
			&=\mathbb P(X_0\in A)
			=\pi(A),
		\end{align*}
		where \eqref{eq:D-conditional-martingale} gives
		\(\E[D_h\mid\mathcal F_0]=D_0=1\).
		
		Under \(\mathbb P_h^{\mathrm{fr}}\), the increment \(W_h-W_0=W_h\) is
		independent of \(\mathcal F_0\) and has law \(N_d(0,hI_d)\).  Since \(X_0\)
		is \(\mathcal F_0\)-measurable, \eqref{eq:frozen-path} gives, conditionally
		on \(X_0=x\),
		\[
		X_h=x-h\nabla U(x)+\sqrt{2h}\,Z,
		\qquad Z\sim N_d(0,I_d).
		\]
		This is precisely the kernel \(Q_h(x,\cdot)\).  Combining this conditional
		law with \(X_0\sim\pi\) proves \eqref{eq:frozen-endpoint-law}.
	\end{proof}
	
	We next quantify how close the path likelihood \(D_h\) is to one.
	General reverse-H\"older and moment estimates for continuous stochastic
	exponentials form a classical theory; see, for example,
	\citet[Chapters~1 and~3]{Kazamaki1994}.  We give a direct localized
	calculation to track explicitly the dependence on $d,p,L$, and $h$ and to
	exploit the problem-specific bound $V_h\leq L^2J_h/2$.
	
	\begin{lemma}\label{lem:path-likelihood}
		There are universal constants \(c,C>0\) such that, for every \(h>0\) and
		every real \(p\geq 1\), the conditions
		\begin{equation}\label{eq:D-moment-condition}
			Lh<1,
			\qquad
			h\leq\frac{c}{L\sqrt{p(d+p)}}
		\end{equation}
		imply
		\begin{equation}\label{eq:D-moments}
			\norm{D_h}_{L^{2p}(\mathbb P)}\leq C,
			\qquad
			\norm{D_h-1}_{L^p(\mathbb P)}
			\leq CLh\sqrt{p(d+p)}.
		\end{equation}
	\end{lemma}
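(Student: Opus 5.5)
We prove the two bounds in \eqref{eq:D-moments} in order: first a uniform bound on $\norm{D_h}_{L^{2p}}$, then the bound on $\norm{D_h-1}_{L^p}$ via It\^o's formula and BDG.

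\emph{Step 1: high moments of $D_h$.} Write
\[
D_h^{2p}=\exp\bigl(2pM_h-pV_h\bigr)=\mathcal E(4pM)_h^{1/2}\exp\bigl((4p^2-p)V_h\bigr).
\]
The identity holds because $\mathcal E(4pM)_h^{1/2}=\exp(2pM_h-4p^2V_h)$. Apply Cauchy--Schwarz, and use that the nonnegative local martingale $\mathcal E(4pM)$ is a supermartingale with mean at most one. This gives
\[
\E\bigl(D_h^{2p}\bigr)\le \Bigl(\E\exp\bigl(2(4p^2-p)V_h\bigr)\Bigr)^{1/2}\le\Bigl(\E\exp\bigl(4p^2L^2J_h\bigr)\Bigr)^{1/2},
\]
where the last step uses $V_h\le L^2J_h/2$ from \eqref{eq:V-J}. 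By \eqref{eq:Jh-mgf} with $\lambda=4p^2L^2$, which requires $16p^2L^2h^2<1$, the right side is at most $(1-16p^2L^2h^2)^{-d/4}$. Under $h\le c/(L\sqrt{p(d+p)})$ we have $16p^2L^2h^2\le 16c^2p/(d+p)$, so for $c$ small this factor is at most $\exp(Cc^2 pd/(d+p))\le e^{Cc^2p}$. Taking the $2p$-th root gives $\norm{D_h}_{L^{2p}}\le C$.

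\emph{Step 2: $D_h-1$ as a stochastic integral.} By It\^o's formula, $D_t-1=\int_0^tD_s\ip{\theta_s}{\df B_s}$. This is a true martingale: Doob's inequality combined with Step 1 (for $p\ge1$) controls $\sup_{s\le h}D_s$, and $V_h$ has all moments, so localization passes to the limit. For $p\ge2$, apply BDG with its $\sqrt p$-growth constant:
\[
\norm{D_h-1}_{L^p}\le C\sqrt p\,\Bigl\|\Bigl(\int_0^hD_s^2\norm{\theta_s}^2\,\df s\Bigr)^{1/2}\Bigr\|_{L^p}\le C\sqrt p\,\Bigl\|\sup_{s\le h}D_s\Bigr\|_{L^{2p}}\norm{V_h}_{L^p}^{1/2},
\]
where the second inequality is H\"older. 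By Doob's $L^{2p}$ maximal inequality, $\norm{\sup_{s\le h} D_s}_{L^{2p}}\le \frac{2p}{2p-1}\norm{D_h}_{L^{2p}}\le C$ by Step 1. By \eqref{eq:V-J}, $\norm{V_h}_{L^p}^{1/2}\le CLh\sqrt{d+p}$. Together these give $\norm{D_h-1}_{L^p}\le CLh\sqrt{p(d+p)}$. For $1\le p<2$, we use monotonicity of $L^p$ norms together with the bound at $p=2$; the hypothesis at $p$ implies the hypothesis at $p=2$ after shrinking $c$ by a constant factor, since $\sqrt{2(d+2)}\le 2\sqrt{p(d+p)}$.

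\emph{Main obstacle.} The delicate step is Step 1. The exponent $4p^2L^2J_h$ is of size $p^2L^2h^2(d+p)$ in mean, so we must verify that the condition $h\lesssim 1/(L\sqrt{p(d+p)})$ keeps $\E e^{\lambda J_h}$ at most $e^{O(p)}$, rather than merely finite. The exact chi-square form of \eqref{eq:Jh-mgf} makes this work: using $-\log(1-x)\le 2x$ for $x\le1/2$, the $-\tfrac d2$ power contributes $d\cdot p^2L^2h^2\lesssim c^2p$. A secondary technical point is justifying that the It\^o integral in Step 2 is a genuine martingale; if needed we stop at $\tau_n=\inf\{t:D_t+V_t\ge n\}$ and apply Fatou.
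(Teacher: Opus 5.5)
Your proof is correct and follows the paper's argument essentially step for step: the same factorization $D_h^{2p}=\mathcal E(4pM)_h^{1/2}e^{(4p^2-p)V_h}$ with Cauchy--Schwarz and \eqref{eq:Jh-mgf}, then It\^o's formula, BDG with $\sqrt p$ growth, H\"older, and Doob. The only differences are that you use the supermartingale bound $\E\,\mathcal E(4pM)_h\le1$ where the paper localizes at $\tau_n$, and you reduce $1\le p<2$ to $p=2$ where the paper quotes a BDG constant valid for all $p\ge1$. One small fix: Doob's $L^{2p}$ maximal inequality requires $D$ to be a true martingale already, so justify this by Novikov's criterion (\cref{lem:frozen-endpoint-law}, using $Lh<1$), since using Doob to prove it, as you wrote, is circular.
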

	
	\begin{proof}
		\noindent {\it Stopping conventions.}
		We use standard process-level stopping identities. If \(N_t\) is a continuous local martingale and
		\(\tau\) is a stopping time, then, up to indistinguishability,
		\begin{equation} \label{eq:quadratic-stop}
			N_t^\tau:=N_{t\wedge\tau},
			\qquad
			[N^\tau]_t=[N]_{t\wedge\tau}.
		\end{equation}
		Likewise, for a predictable integrand \(H_t\),
		\begin{equation}\label{eq:stopped-stochastic-integral}
			\left(\int_0^\cdot\ip{H_s}{\df B_s}\right)_{t\wedge\tau}
			=
			\int_0^t\1_{\{s\leq\tau\}}\ip{H_s}{\df B_s}.
		\end{equation}
		These are identities of stopped processes and therefore hold for all
		\(t\) simultaneously outside a single null set; continuity then permits
		evaluation at any bounded random time. Purely algebraic identities among
		continuous processes may be evaluated at \(t\wedge\tau\) for the same
		reason, and inequalities such as
		\(V_{t\wedge\tau}\leq V_t\) follow pathwise from monotonicity. By
		contrast, an estimate obtained only after taking expectations at each fixed
		time cannot generally be evaluated at a stopping time without a separate
		optional-stopping argument. No such step is used here.
		See \citet[Chapter~IV, Sections~1--2]{RevuzYor1999},
		\citet[Chapter~3]{KaratzasShreve1991}, or
		\citet[Chapter~II]{Protter2005} for stopping of continuous local
		martingales, stochastic integrals, and quadratic variation.
		
		\smallskip
		\noindent\emph{Step 1: a uniform \(L^{2p}\) bound for \(D_h\).}
		For \(n\geq1\), let
		\[
		\tau_n=\inf\{t\in[0,h]:V_t\geq n\},
		\qquad
		\inf\varnothing=\infty.
		\]
		Because \(V\) is continuous and adapted, \(\tau_n\) is a stopping time.
		Set \(M_t^{(n)}=M_{t\wedge\tau_n}\). By \eqref{eq:quadratic-stop},
		\[
		[M^{(n)}]_t=[M]_{t\wedge\tau_n}=V_{t\wedge\tau_n}.
		\]
		Consequently, the quadratic variation of the continuous local martingale
		\(4pM^{(n)}\) is
		\[
		[4pM^{(n)}]_t
		=
		16p^2V_{t\wedge\tau_n}
		\leq16p^2n,
		\qquad 0\leq t\leq h.
		\]
		Consider its stochastic exponential
		\[
		\Lambda_t^{(2p,n)}
		=
		\exp\left(
		4pM_{t\wedge\tau_n}-8p^2V_{t\wedge\tau_n}
		\right).
		\]
		Novikov's criterion applies because
		\[
		\E\left[
		\exp\left(\frac12[4pM^{(n)}]_h\right)
		\right]
		\leq e^{8p^2n}<\infty.
		\]
		Hence \(\Lambda^{(2p,n)}\) is a true mean-one martingale, and
		\begin{equation}\label{eq:Lambda-mean-one}
			\E\left[\Lambda_h^{(2p,n)}\right]=1.
		\end{equation}
		
		The definitions of \(D\) and \(\Lambda^{(2p,n)}\) give the exact algebraic
		identity
		\[
		D_{h\wedge\tau_n}^{2p}
		=
		\bigl[\Lambda_h^{(2p,n)}\bigr]^{1/2}
		e^{(4p^2-p)V_{h\wedge\tau_n}}.
		\]
		Cauchy--Schwarz, \eqref{eq:Lambda-mean-one}, and the inequalities
		\(V_{h\wedge\tau_n}\leq V_h\) and \(8p^2-2p\geq0\), valid for \(p\geq1\), yield
		\begin{align*}
			\E\left[D_{h\wedge\tau_n}^{2p}\right]
			&\leq
			\bigl\{\E[\Lambda_h^{(2p,n)}]\bigr\}^{1/2}
			\left\{
			\E\left[e^{(8p^2-2p)V_{h\wedge\tau_n}}\right]
			\right\}^{1/2}\\
			&\leq
			\left\{
			\E\left[e^{(8p^2-2p)V_h}\right]
			\right\}^{1/2}.
		\end{align*}
		Since \(V_h\leq L^2J_h/2\), as noted in \eqref{eq:V-J}, equation \eqref{eq:Jh-mgf} gives
		\[
		\E\left[D_{h\wedge\tau_n}^{2p}\right]
		\leq
		(1-u)^{-d/4},
		\qquad
		u=4p(4p-1)L^2h^2,
		\]
		provided \(u<1\). By \eqref{eq:D-moment-condition},
		\[
		u
		\leq16p^2L^2h^2
		\leq16c^2\frac{p}{d+p}
		\leq16c^2.
		\]
		Choose the universal constant \(c>0\) in the statement so that
		\(16c^2\leq1/2\). It follows that \(0\leq u\leq1/2\), and hence
		\[
		\log\left(\frac1{1-u}\right)\leq2u.
		\]
		Therefore
		\begin{align*}
			\log\norm{D_{h\wedge\tau_n}}_{L^{2p}(\mathbb P)}
			&=
			\frac1{2p}
			\log\left(
			\E\left[D_{h\wedge\tau_n}^{2p}\right]
			\right)\\
			&\leq
			\frac{d}{8p}\log\left(\frac1{1-u}\right)\\
			&\leq\frac{du}{4p}\\
			&\leq4dpL^2h^2\\
			&\leq4c^2\frac{d}{d+p}\\
			&\leq4c^2.
		\end{align*}
		Thus
		\begin{equation}\label{eq:D-stopped-uniform}
			\norm{D_{h\wedge\tau_n}}_{L^{2p}(\mathbb P)}
			\leq e^{4c^2}
		\end{equation}
		uniformly in \(n\). Finally, \(V_h<\infty\) almost surely. For every
		sample path, once \(n>V_h\), one has \(\tau_n=\infty\) and hence
		\(D_{h\wedge\tau_n}=D_h\). Fatou's lemma applied to the \((2p)\)-th
		powers in \eqref{eq:D-stopped-uniform} gives
		\begin{equation}\label{eq:D-2p-uniform}
			\norm{D_h}_{L^{2p}(\mathbb P)}
			\leq e^{4c^2}.
		\end{equation}
		This proves the first estimate in \eqref{eq:D-moments}.
		
		\smallskip
		\noindent\emph{Step 2: representation of \(D_h-1\) as a stochastic integral.}
		Let
		\[
		Y_t=M_t-\frac12V_t,
		\qquad
		D_t=e^{Y_t}.
		\]
		Since \(M\) is a continuous local \(\mathbb P\)-martingale and
		\(V\) is continuous and of finite variation, \(Y\) is a continuous
		semimartingale with quadratic variation
		\([Y]_t=[M]_t=V_t\).
		Applying It\^o's formula for continuous semimartingales
		\citep[Section~3.3.A, Theorem~3.3]{KaratzasShreve1991}
		to \(y\mapsto e^y\) gives
		\begin{align*}
			\df D_t
			&=
			D_t\,\df Y_t+\frac12D_t\,\df[Y]_t\\
			&=
			D_t\left(\df M_t-\frac12\df V_t\right)
			+\frac12D_t\,\df V_t\\
			&=
			D_t\,\df M_t\\
			&=
			D_t\ip{\theta_t}{\df B_t}.
		\end{align*}
		Integrating this identity from zero to \(t\) gives the local-martingale
		representation
		\begin{equation}\label{eq:D-minus-one-martingale}
			D_t-1
			=
			\int_0^tD_s\ip{\theta_s}{\df B_s}.
		\end{equation}
		See \citet[Chapter~3]{KaratzasShreve1991} or
		\citet[Chapter~IV]{RevuzYor1999} for It\^o's formula and the quadratic
		variation rules used here.
		
		\smallskip
		\noindent\emph{Step 3: application of BDG to an explicitly stopped martingale.}
		For \(n\geq2\), define
		\[
		\sigma_n
		=
		\tau_n\wedge
		\inf\{t\in[0,h]:D_t\geq n\},
		\qquad
		\inf\varnothing=\infty.
		\]
		Both \(V_t\) and \(D_t\) are continuous and adapted, so \(\sigma_n\) is a
		stopping time. Stopping the process identity
		\eqref{eq:D-minus-one-martingale} and using
		\eqref{eq:stopped-stochastic-integral} gives, for every deterministic
		\(t\in[0,h]\),
		\begin{align*}
			R_t^{(n)}
			&:=
			D_{t\wedge\sigma_n}-1\\
			&=
			\int_0^{t\wedge\sigma_n}
			D_s\ip{\theta_s}{\df B_s}\\
			&=
			\int_0^t
			\1_{\{s\leq\sigma_n\}}D_s\ip{\theta_s}{\df B_s}.
		\end{align*}
		Thus \(R^{(n)}\) is a square-integrable continuous martingale. Indeed,
		its quadratic variation is
		\[
		[R^{(n)}]_h
		=
		\int_0^h
		\1_{\{s\leq\sigma_n\}}
		D_s^2\norm{\theta_s}^2\,\df s
		=
		\int_0^{h\wedge\sigma_n}
		D_s^2\norm{\theta_s}^2\,\df s
		\leq
		n^2V_{h\wedge\tau_n}
		\leq n^3.
		\]
		The possible distinction between \(s<\sigma_n\) and
		\(s\leq\sigma_n\) is immaterial for the Lebesgue and It\^o integrals.
		
		Apply the upper inequality in
		\citet[Theorem~2, equation~(23)]{Ren2008}, valid for every real \(p\geq1\),
		to the stopped continuous martingale \(R^{(n)}\) at time \(h\).
		This yields
		\begin{equation}\label{eq:BDG-stopped}
			\norm{R_h^{(n)}}_{L^p(\mathbb P)}
			\leq
			\norm{\sup_{0\leq t\leq h}\abs{R_t^{(n)}}}_{L^p(\mathbb P)}
			\leq
			2\sqrt{2p}\,
			\norm{[R^{(n)}]_h^{1/2}}_{L^p(\mathbb P)}.
		\end{equation}
		
		Pathwise,
		\[
		[R^{(n)}]_h^{1/2}
		\leq
		\left(\sup_{0\leq s\leq h}D_s\right)V_h^{1/2}.
		\]
		H\"older's inequality with exponents two and two therefore gives
		\begin{align}
			\norm{[R^{(n)}]_h^{1/2}}_{L^p(\mathbb P)}
			&\leq
			\norm{\sup_{0\leq s\leq h}D_s}_{L^{2p}(\mathbb P)}
			\norm{V_h^{1/2}}_{L^{2p}(\mathbb P)}
			\notag\\
			&=
			\norm{\sup_{0\leq s\leq h}D_s}_{L^{2p}(\mathbb P)}
			\norm{V_h}_{L^p(\mathbb P)}^{1/2}.
			\label{eq:BDG-holder-details}
		\end{align}
		By \cref{lem:frozen-endpoint-law}, since $Lh < 1$, \((D_t)_{0 \leq t \leq h}\) is a nonnegative true
		martingale. 
		Conditional Jensen's inequality gives
		\[
		\sup_{0\leq s\leq h}\norm{D_s}_{L^{2p}(\mathbb P)}
		=\norm{D_h}_{L^{2p}(\mathbb P)}.
		\]
		Since \(2p > 1\), Doob's maximal inequality
		\citep[Chapter~II, Theorem~1.7]{RevuzYor1999} gives
		\begin{align*}
			\norm{\sup_{0\leq s\leq h}D_s}_{L^{2p}(\mathbb P)}
			&\leq\frac{2p}{2p-1}
			\sup_{0\leq s\leq h}\norm{D_s}_{L^{2p}(\mathbb P)}\\
			&=\frac{2p}{2p-1}\norm{D_h}_{L^{2p}(\mathbb P)}
			\leq2e^{4c^2},
		\end{align*}
		where the last inequality uses \eqref{eq:D-2p-uniform} and
		\(2p/(2p-1)\leq2\) for \(p\geq1\). Moreover,
		\eqref{eq:V-J} gives
		\[
		\norm{V_h}_{L^p(\mathbb P)}^{1/2}
		\leq CLh\sqrt{d+p}.
		\]
		Substituting these bounds into
		\eqref{eq:BDG-stopped}--\eqref{eq:BDG-holder-details} gives, uniformly in
		\(n\),
		\begin{equation}\label{eq:D-minus-one-stopped}
			\norm{D_{h\wedge\sigma_n}-1}_{L^p(\mathbb P)}
			\leq CLh\sqrt{p(d+p)}.
		\end{equation}
		To remove the stopping, note that the continuous functions
		\(t\mapsto V_t\) and \(t\mapsto D_t\) are bounded on the compact interval
		\([0,h]\) for every sample path. Hence \(\sigma_n>h\) for all sufficiently
		large \(n\), path by path, and
		\[
		D_{h\wedge\sigma_n}-1\longrightarrow D_h-1
		\qquad\text{almost surely}.
		\]
		Fatou's lemma applied to \eqref{eq:D-minus-one-stopped} proves
		\[
		\norm{D_h-1}_{L^p(\mathbb P)}
		\leq CLh\sqrt{p(d+p)}.
		\]
		This is the second estimate in \eqref{eq:D-moments} and completes the proof.
	\end{proof}

	\subsection{From path likelihood to MALA rejection}
	
	\begin{proof}[Proof of \cref{prop:stationary-rejection}]
		
		Choose the universal constant \(c\) in
		\cref{prop:stationary-rejection} no larger than the constant~$c$ in
		\cref{lem:path-likelihood} and sufficiently small so that the step-size
		condition $h \leq c/[L\sqrt{p(d+p)}]$ implies \(Lh<1\).
		This will allow us to apply \cref{lem:frozen-endpoint-law,lem:path-likelihood}.
		
		Let \(S_h\) be the exact Langevin transition kernel.  Define two endpoint
		probability measures on \(\R^d\times\R^d\):
		\begin{equation*}
			\nu_h(\df x,\df y)=\pi(\df x)S_h(x,\df y),
			\qquad
			\mu_h(\df x,\df y)=\pi(\df x)Q_h(x,\df y).
		\end{equation*}
		The measure \(\nu_h\) is the law of \((X_0,X_h)\) under the exact path law
		\(\mathbb P\).  By \cref{lem:frozen-endpoint-law}, \(\mu_h\) is the law of
		\((X_0,X_h)\) under \(\mathbb P_h^{\mathrm{fr}}\).  Moreover,
		\eqref{eq:frozen-measure-definition} gives
		\[
		\frac{\df\mathbb P_h^{\mathrm{fr}}}{\df\mathbb P}=D_h
		\quad\text{on }\mathcal F_h,
		\]
		so \(\mu_h\ll\nu_h\).  Let
		\[
		F_h=\frac{\df\mu_h}{\df\nu_h}.
		\]
		
		We next pass from the full-path likelihood \(D_h\) to the endpoint likelihood
		\(F_h\).  By the Doob--Dynkin lemma,
		\(\E[D_h\mid\sigma(X_0,X_h)]\) has a Borel-measurable representation as a
		function of the two endpoints; this function is a version of \(F_h\).  Indeed,
		for every bounded Borel function
		\(\varphi:\R^d\times\R^d\to\R\),
		\begin{align*}
			\int\varphi(x,y)\,\mu_h(\df x,\df y)
			&=\E_{\mathbb P_h^{\mathrm{fr}}}[\varphi(X_0,X_h)]\\
			&=\E[D_h\varphi(X_0,X_h)]\\
			&=\E\left\{
			\E[D_h\mid\sigma(X_0,X_h)]
			\varphi(X_0,X_h)
			\right\}.
		\end{align*}
		This is exactly the defining Radon--Nikodym identity.  Thus
		\begin{equation}\label{eq:endpoint-conditioning}
			F_h(X_0,X_h)
			=\E[D_h\mid\sigma(X_0,X_h)]
			\qquad \mathbb P\text{-almost surely}.
		\end{equation}
		This is the usual principle that the likelihood ratio of an observed
		statistic is the conditional expectation of the full-data likelihood ratio;
		see \citet{Kallenberg2021} for conditional expectation, disintegration, and
		regular conditional distributions.
		
		Subtracting one in \eqref{eq:endpoint-conditioning} and using conditional
		Jensen's inequality gives the \(L^p\)-contraction explicitly:
		\begin{align*}
			\norm{F_h-1}_{L^p(\nu_h)}^p
			&=\E\left|
			\E[D_h-1\mid\sigma(X_0,X_h)]
			\right|^p\\
			&\leq\E\left\{
			\E\left[
			|D_h-1|^p\mid\sigma(X_0,X_h)
			\right]
			\right\} \\
			&=\norm{D_h-1}_{L^p(\mathbb P)}^p.
		\end{align*}
		Hence
		\begin{equation}\label{eq:F-contraction}
			\norm{F_h-1}_{L^p(\nu_h)}
			\leq
			\norm{D_h-1}_{L^p(\mathbb P)}.
		\end{equation}
		
		We also need the conditional form of \(\mu_h=F_h\nu_h\).  Evaluating this
		measure identity on a rectangle \(A\times B\) gives
		\begin{equation}\label{eq:rectangle-disintegration}
			\int_AQ_h(x,B)\,\pi(\df x)
			=\int_A\int_BF_h(x,y)S_h(x,\df y)\,\pi(\df x).
		\end{equation}
		Fix a countable \(\pi\)-system \(\mathcal C\) that generates the Borel
		\(\sigma\)-field on \(\R^d\), for example, the finite intersections of open
		balls with rational centers and radii, together with \(\R^d\).  For each
		\(B\in\mathcal C\), equality
		\eqref{eq:rectangle-disintegration} for every Borel \(A\) implies equality
		of the two integrands for \(\pi\)-almost every \(x\), i.e.,
		\[
		Q_h(x,B) = \int_B F_h(x,y) S_h(x, \df y).
		\]
		Since
		\(\mathcal C\) is countable, there is one \(\pi\)-null set outside which
		this holds simultaneously for every \(B\in\mathcal C\).  For each fixed
		\(x\) outside that null set, both sides are measures in \(B\), and the
		monotone-class theorem extends their equality from \(\mathcal C\) to all
		Borel sets.  Thus, for \(\pi\)-almost every \(x\),
		\begin{equation}\label{eq:conditional-F}
			Q_h(x,\df y)=F_h(x,y)S_h(x,\df y).
		\end{equation}
		
		By reversibility, \(\nu_h\) is symmetric. Hence the transpose
		measure \(\mu_h^\top(\df x,\df y)=\mu_h(\df y,\df x)\)
		has density \(F_h(y,x)\) with respect to \(\nu_h\).
		Since the target and proposal densities are strictly positive,
		the Radon--Nikodym chain rule gives
		\[
		F_h(y,x)
		=
		\frac{e^{-U(y)}q_h(y,x)}
		{e^{-U(x)}q_h(x,y)}
		F_h(x,y)
		\qquad \nu_h\text{-almost everywhere}.
		\]
		The definition of the Metropolis--Hastings acceptance probability
		therefore implies
		\[
		F_h(x,y)\alpha_h(x,y)
		=
		\min\{F_h(x,y),F_h(y,x)\}
		\qquad \nu_h\text{-almost everywhere}.
		\]
		This is the usual Metropolis--Hastings minimum identity
		\citep{Tierney1998,BilleraDiaconis2001}.
		
		Because \(\nu_h(\df x,\df y)=\pi(\df x)S_h(x,\df y)\),
		Tonelli's theorem shows that the preceding identity holds for
		\(S_h(x,\cdot)\)-almost every \(y\), for \(\pi\)-almost every \(x\).
		Using \eqref{eq:conditional-F}, we obtain,
		for \(\pi\)-almost every \(x\),
		\begin{align*}
			r_h(x)
			&= 1 - \int \alpha_h(x,y) \, Q_h(x, \df y) \\
			&=1 - \int \alpha_h(x,y) \, F_h(x,y) \, S_h(x, \df y) \\
			&=\int
			\bigl[1-\min\{F_h(x,y),F_h(y,x)\}\bigr]
			\,S_h(x,\df y)\\
			&\leq
			\int
			\bigl(\abs{F_h(x,y)-1}+\abs{F_h(y,x)-1}\bigr)
			\,S_h(x,\df y).
		\end{align*}
		
		Define
		\[
		A_h(x)=\int\abs{F_h(x,y)-1}S_h(x,\df y),
		\qquad
		B_h(x)=\int\abs{F_h(y,x)-1}S_h(x,\df y).
		\]
		Then \(r_h\leq A_h+B_h\).  Jensen's inequality for the probability measure
		\(S_h(x,\df y)\) gives
		\begin{align*}
			\norm{A_h}_{L^p(\pi)}^p
			&\leq\int\abs{F_h(x,y)-1}^p\,\nu_h(\df x,\df y),\\
			\norm{B_h}_{L^p(\pi)}^p
			&\leq\int\abs{F_h(y,x)-1}^p\,\nu_h(\df x,\df y).
		\end{align*}
		The first right-hand side is
		\(\norm{F_h-1}_{L^p(\nu_h)}^p\); the second is the same because \(\nu_h\)
		is symmetric.  Minkowski's inequality therefore yields
		\begin{equation}\label{eq:rejection-F}
			\norm{r_h}_{L^p(\pi)}
			\leq
			\norm{A_h}_{L^p(\pi)}+\norm{B_h}_{L^p(\pi)}
			\leq2\norm{F_h-1}_{L^p(\nu_h)}.
		\end{equation}
		Combining \eqref{eq:rejection-F}, \eqref{eq:F-contraction}, and
		\cref{lem:path-likelihood} gives
		\[
		\norm{r_h}_{L^p(\pi)}
		\leq 2\norm{D_h-1}_{L^p(\mathbb P)}
		\leq CLh\sqrt{p(d+p)},
		\]
		after absorbing the factor two into the universal constant.  This is
		\eqref{eq:stationary-rejection}.
	\end{proof}
	
	\subsection{Proving \cref{prop:overlap}} \label{app:overlap}
	
	\begin{proof}
		We begin with the first of the two assertions.
		Let \(c\) be the step-size constant in \cref{prop:stationary-rejection},
		and choose \(0<c_r\leq\min\{c,1\}\).
		Fix $p \geq 1$ and $t$ satisfying \eqref{eq:overlap-step-condition}.
		Then \cref{prop:stationary-rejection} applies to every
		\(h\in[t/2,t]\), and
		\[
		Lt\leq\frac{c_r}{\sqrt{p(d+p)}}\leq\frac1{\sqrt2}<2.
		\]
		Define the rejection probability averaged over \([t/2,t]\) by
		\[
		\overline r_t(x)=\frac2t\int_{t/2}^t r_h(x)\,\df h.
		\]
		Minkowski's integral inequality and \cref{prop:stationary-rejection} give
		\[
		\begin{aligned}
			\norm{\overline r_t}_{L^p(\pi)}
			&\leq\frac2t\int_{t/2}^t\norm{r_h}_{L^p(\pi)}\,\df h\\
			&\leq CL\sqrt{p(d+p)}\,\frac2t\int_{t/2}^t h\,\df h
			=\frac{3C}{4}Lt\sqrt{p(d+p)}.
		\end{aligned}
		\]
		Set
		\[
		G_{t,p}=\{x:\overline r_t(x)\leq1/3\}.
		\]
		Markov's inequality yields
		\begin{equation*}
			\pi(G_{t,p}^c)
			\leq
			\left(\frac{9C}{4}Lt\sqrt{p(d+p)}\right)^p,
		\end{equation*}
		which is \eqref{eq:overlap-exceptional} after renaming the universal
		constant.
		
		We next compare nearby proposal laws.  Because $U$ is convex and has $L$-Lipschitz gradient, the Baillon--Haddad cocoercivity
		inequality states that \citep[Theorem~2.1.5]{Nesterov2004}
		\[
		\ip{\nabla U(x)-\nabla U(y)}{x-y}
		\geq
		\frac1L\norm{\nabla U(x)-\nabla U(y)}^2.
		\]
		See also \citet{BauschkeCombettes2010}.
		Consequently, for \(h\leq t\),
		\[
		\begin{aligned}
			&\norm{x-h\nabla U(x)-y+h\nabla U(y)}^2\\
			&\quad=\norm{x-y}^2
			-2h\ip{x-y}{\nabla U(x)-\nabla U(y)}
			+h^2\norm{\nabla U(x)-\nabla U(y)}^2\\
			&\quad\leq\norm{x-y}^2.
		\end{aligned}
		\]
		Note that we have used $h \leq t \leq 2/L$.
		Thus the proposal mean map $x \mapsto x - h \nabla U(x)$ is nonexpansive.  Two Gaussian laws with
		covariance \(2hI_d\) and means \(\mu_x,\mu_y\) have Kullback--Leibler
		divergence \(\norm{\mu_x-\mu_y}^2/(4h)\).  Pinsker's inequality
		\citep[Lemma~2.5]{Tsybakov2009} therefore gives
		\begin{equation}\label{eq:proposal-TV}
			\TV(Q_h(x,\cdot),Q_h(y,\cdot))
			\leq
			\frac{\norm{x-y}}{\sqrt{8h}}
			\leq
			\frac{\norm{x-y}}{2\sqrt{t}},
			\qquad h\in[t/2,t].
		\end{equation}
		For fixed \(x\), write
		\[
		R_{h,x}(\df y)=[1-\alpha_h(x,y)] \, Q_h(x,\df y).
		\]
		This measure has total mass \(r_h(x)\), and, because \(Q_h(x,\cdot)\) has
		a Lebesgue density, it gives no mass to \(\{x\}\).  Hence
		\[
		P_h(x,\cdot)-Q_h(x,\cdot)
		=r_h(x)\delta_x-R_{h,x}
		\]
		is the Jordan decomposition into mutually singular positive and negative
		parts.  This proves
		\begin{equation}\label{eq:mala-proposal-tv-exact}
			\TV(P_h(x,\cdot),Q_h(x,\cdot))=r_h(x).
		\end{equation}
		Total variation is convex under mixing: for probability kernels
		\(M_h,N_h\) and a probability measure \(\rho\),
		\[
		\TV\left(\int M_h\,\df\rho,\int N_h\,\df\rho\right)
		\leq\int\TV(M_h,N_h)\,\df\rho.
		\]
		Applying this fact three times, together with the triangle inequality,
		\eqref{eq:mala-proposal-tv-exact}, and \eqref{eq:proposal-TV}, gives, for
		\(x,y\in G_{t,p}\) with \(\norm{x-y}\leq\sqrt{t}/16\),
		\begin{align*}
			\TV(\cK_t(x,\cdot),\cK_t(y,\cdot))
			&\leq\overline r_t(x)
			+\frac2t\int_{t/2}^t
			\TV(Q_h(x,\cdot),Q_h(y,\cdot))\,\df h
			+\overline r_t(y)\\
			&\leq\frac13+\frac1{32}+\frac13
			=\frac{67}{96}<\frac34.
		\end{align*}
		This proves the first assertion of \cref{prop:overlap}.
		We note that this type of argument is common in the MCMC literature; see, e.g., the proof of Theorem~3 in \cite{WuSchmidlerChen2022}.
		
		It remains to prove the final assertion.
		Let $Z \sim N_d(0, I_d)$, and \(Y=x-h \nabla U(x) +\sqrt{2h}Z\).  We first derive the log-ratio identity used below.
		Since, for $a, b \in \mathbb{R}^d$,
		\[
		q_h(a,b)=(4\pi h)^{-d/2}
		\exp\left[-\frac{\norm{b-a+h\nabla U(a)}^2}{4h}\right],
		\]
		one has
		\begin{align*}
			\log\frac{e^{-U(Y)}q_h(Y,x)}{e^{-U(x)}q_h(x,Y)}
			& =U(x)-U(Y)\\
			&\quad+\frac1{4h}
			\left(\norm{Y-x+h \nabla U(x)}^2-\norm{x-Y+h \nabla U(Y)}^2\right).
		\end{align*}
		Expanding the two squared norms and collecting
		the gradient and Gaussian terms gives
		\begin{equation}\label{eq:MH-ratio-expansion}
			\begin{aligned}
				\log\frac{e^{-U(Y)}q_h(Y,x)}{e^{-U(x)}q_h(x,Y)}
				={}&U(x)-U(Y)-\ip{\nabla U(Y)}{x-Y}
				-\frac h4\norm{\nabla U(Y)-\nabla U(x)}^2\\
				&-\sqrt{\frac h2}\ip{Z}{\nabla U(Y)-\nabla U(x)}.
			\end{aligned}
		\end{equation}
		The Baillon--Haddad inequality
		\citep[Theorem~2.1.5]{Nesterov2004} gives
		\[
		U(x)-U(Y)-\ip{\nabla U(Y)}{x-Y}
		\geq
		\frac1{2L}\norm{\nabla U(Y)-\nabla U(x)}^2.
		\]
		If \(h\leq1/L\), the right-hand side of
		\eqref{eq:MH-ratio-expansion} is therefore at least
		\[
		\frac1{4L}\norm{\nabla U(Y)-\nabla U(x)}^2
		-\sqrt{\frac h2}\norm{Z}\norm{\nabla U(Y)-\nabla U(x)}
		\geq
		-\frac{Lh}{2}\norm{Z}^2.
		\]
		It follows that
		\[
		\alpha_h(x,Y)\geq e^{-Lh\norm{Z}^2/2},
		\qquad
		\E[\alpha_h(x,Y)]
		\geq(1+Lh)^{-d/2}
		\geq e^{-Lhd/2}.
		\]
		Let \(t'\in(0,1/(2Ld)]\).
		Then every \(h\in[t'/2,t']\) has rejection probability
		at most \(1-e^{-1/4}\), uniformly in \(x\).  Combining two such rejection
		bounds with \eqref{eq:proposal-TV}, we obtain, whenever
		$\norm{x-y}\leq\sqrt{t'}/16$,
		\[
		\TV(\cK_{t'}(x,\cdot),\cK_{t'}(y,\cdot))
		\leq
		2(1-e^{-1/4})+\frac1{32}<\frac34.
		\]
		This proves the final assertion of \cref{prop:overlap} and completes the proof.
	\end{proof}
	
	\section{Proof of \cref{prop:separated}}\label{app:gaussian}
	
	We repeatedly use the following notation for the standard Gaussian
	distribution:
	\[
	\varphi(x)=\frac{1}{\sqrt{2\pi}}e^{-x^2/2},
	\qquad
	\Phi(x)=\int_{-\infty}^x\varphi(v)\,\df v,
	\qquad
	\overline\Phi(x)=1-\Phi(x).
	\]

	\subsection{Some basic properties of Gaussian distribution functions}
	
	The Gaussian enlargement inequality \eqref{eq:bakry-ledoux-enlargement},
	justified above under the first-order assumptions, states that
	\[
	\pi(A^r)
	\geq
	\Phi\bigl(\Phi^{-1}(\pi(A))+\sqrt{m}\,r\bigr).
	\]
	We analyze the map $(q,s)\mapsto\Phi(\Phi^{-1}(q)+s)$.
	
	We begin with a Mills-ratio bound.
	
	\begin{lemma}
		Let \(a\geq0\).
		Then
		\begin{equation}\label{eq:mills-two-sided}
			\frac{\varphi(a)}{1+a}
			\leq
			\overline\Phi(a)
			\leq
			\frac{2\varphi(a)}{1+a}.
		\end{equation}
		Moreover,
		\begin{equation}\label{eq:log-tail-simple}
			\log\frac1{\overline\Phi(a)}
			\leq
			(1+a)^2.
		\end{equation}
	\end{lemma}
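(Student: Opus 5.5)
The plan is to prove the two-sided Mills-ratio bound \eqref{eq:mills-two-sided} by comparing derivatives on $[0,\infty)$, and then to deduce \eqref{eq:log-tail-simple} from its lower half.

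\emph{Lower bound.} Set $g(a)=\overline\Phi(a)-\varphi(a)/(1+a)$. As $a\to\infty$ both terms tend to $0$, so $g(\infty)=0$. Using $\varphi'(a)=-a\varphi(a)$, we get
\[
g'(a)=-\varphi(a)+\frac{a\varphi(a)}{1+a}+\frac{\varphi(a)}{(1+a)^2}
=\varphi(a)\left[\frac{1}{(1+a)^2}-\frac{1}{1+a}\right]\le0 .
\]
So $g$ is nonincreasing and tends to $0$, which gives $g\ge0$ on $[0,\infty)$.

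\emph{Upper bound.} Set $k(a)=2\varphi(a)/(1+a)-\overline\Phi(a)$. Again $k(\infty)=0$, and
\[
k'(a)=\varphi(a)\left[1-\frac{2a}{1+a}-\frac{2}{(1+a)^2}\right].
\]
The bracket equals $\bigl[(1+a)^2-2a(1+a)-2\bigr]/(1+a)^2=(-1-a^2)/(1+a)^2<0$. Hence $k$ is decreasing with limit $0$, so $k\ge0$.

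\emph{Log-tail bound.} Take logarithms in the lower bound:
\[
\log\frac{1}{\overline\Phi(a)}\le \frac{a^2}{2}+\tfrac12\log(2\pi)+\log(1+a).
\]
It remains to show that the right side is at most $(1+a)^2=1+2a+a^2$. Since $\log(1+a)\le a$, it suffices that $\tfrac12\log(2\pi)\le 1+a+a^2/2$. This holds because $\tfrac12\log(2\pi)\approx0.919<1$.

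There is no real obstacle here. The only care needed is the sign computation in the derivative for the upper bound and checking the limits at infinity, both of which are routine.
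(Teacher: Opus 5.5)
Your proof is correct and matches the paper's argument: the same auxiliary functions with the same derivatives $-a\varphi(a)/(1+a)^2$ and $-(1+a^2)\varphi(a)/(1+a)^2$, and the log-tail bound derived from the lower Mills bound. The only cosmetic difference is the last step, where you use $\log(1+a)\leq a$ and $\tfrac12\log(2\pi)<1$. The paper instead checks that $(1+a)^2-\bigl[a^2/2+\log(1+a)+\tfrac12\log(2\pi)\bigr]$ is positive at $a=0$ and has positive derivative $2+a-1/(1+a)$.
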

	
	\begin{proof}
		Let
		\[
		g(a)=\overline\Phi(a)-\frac{\varphi(a)}{1+a}.
		\]
		Then \(g(a)\to0\) as \(a\to\infty\), and direct differentiation gives
		\[
		g'(a)=-\frac{a\varphi(a)}{(1+a)^2}\leq0.
		\]
		Thus \(g(a)\geq0\), proving the lower bound in
		\eqref{eq:mills-two-sided}.  Similarly, for
		\[
		f(a)=\frac{2\varphi(a)}{1+a}-\overline\Phi(a),
		\]
		we have \(f(a)\to0\) as $a \to \infty$ and
		\[
		f'(a)=-\frac{(1+a^2)\varphi(a)}{(1+a)^2}<0.
		\]
		Hence \(f(a)\geq0\), proving the upper bound.  Classical sharper Mills
		bounds for \(a>0\) go back at least to \citet{Gordon1941}.
		
		The lower bound in \eqref{eq:mills-two-sided} implies
		\[
		\log\frac1{\overline\Phi(a)}
		\leq
		\frac{a^2}{2}+\log(1+a)+\frac12\log(2\pi).
		\]
		The difference between \((1+a)^2\) and the right-hand side is positive at
		\(a=0\), and its derivative is
		\[
		2+a-\frac1{1+a}>0.
		\]
		This proves \eqref{eq:log-tail-simple}.
	\end{proof}
	
	\begin{lemma} \label{lem:gaussian-shift}
		Let \(q\in(0,1/2]\) and \(s\geq0\).
		Then
		\begin{equation*}
			\Phi\bigl(\Phi^{-1}(q)+s\bigr)-q
			\geq
			\frac q4
			\min\left\{1,s\sqrt{\log(1/q)}\right\}.
		\end{equation*}
	\end{lemma}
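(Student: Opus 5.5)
Write $a=-\Phi^{-1}(q)\geq0$, so that $q=\overline\Phi(a)$. By symmetry of $\varphi$, the increment is
\[
\Phi(\Phi^{-1}(q)+s)-q=\int_{-a}^{-a+s}\varphi(v)\,\df v=\int_{a-s}^{a}\varphi(v)\,\df v .
\]
The plan is to lower bound this integral in two regimes, depending on whether $s$ is small or large compared to $1/(1+a)$. The Mills bounds \eqref{eq:mills-two-sided} and \eqref{eq:log-tail-simple} then convert the result into the form involving $q$ and $\sqrt{\log(1/q)}$. The key link is \eqref{eq:log-tail-simple}: it gives $\sqrt{\log(1/q)}\leq 1+a$. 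So it suffices to prove
\[
\int_{a-s}^a\varphi\geq \frac q4\min\{1,s(1+a)\},
\]
because $\min\{1,s(1+a)\}\geq\min\{1,s\sqrt{\log(1/q)}\}$.

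\emph{Small shifts.} Suppose $s(1+a)\leq1$. On $[a-s,a]$ (intersected with $\R$; if $a-s<0$, restrict to $[0,a]$ or use $\varphi(v)\geq\varphi(a)$ on all of $[a-s,a]$, since $|v|\leq a$ there whenever $s\leq 2a$) the density $\varphi(v)$ is at least $\varphi(a)$, because $|v|\leq a$. More precisely, when $a-s\geq -a$ every point satisfies $|v|\leq a$, and so the integral is at least $s\varphi(a)$. Applying the upper Mills bound $q\leq 2\varphi(a)/(1+a)$ gives
\[
s\varphi(a)\geq \frac{q}{2}\,s(1+a)\geq \frac q4 s(1+a).
\]
If instead $s>2a$, the condition $s\leq 1/(1+a)$ lets me restrict to the subinterval $[-a,a]$ together with a piece of negative $v$ of bounded size. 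Since $s\leq1$, $\varphi(v)\geq\varphi(1+a)$ is comparable to $\varphi(a)$ up to the factor $e^{-a-1/2}$, which is bounded below when $a\leq1$. This is the regime that needs care.

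\emph{Large shifts.} Suppose $s(1+a)>1$. Monotonicity in $s$ reduces this case to $s_0=1/(1+a)$, where the small-shift bound already yields $q/4$.

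The main obstacle is the boundary bookkeeping in the small-shift case when $a$ is small, i.e.\ $q$ is near $1/2$, and the interval crosses well below $-a$. I would handle it by a crude split. If $a\geq 1$, then $s\leq 1/2\leq a$, so $a-s\geq0$ and $\varphi(v)\geq\varphi(a)$ on the whole interval. If $a<1$, then $\varphi(v)\geq\varphi(2)$ on $[a-s,a]\subseteq[-1,1]$ and $q\leq 1/2$, so the integral is at least $s\varphi(1)\geq 0.24\,s$. On the other hand $(q/4)s(1+a)\leq s/4$, which is slightly larger than $0.24\,s$. To close the gap I would use the sharper estimate $q(1+a)\leq 2\varphi(a)$ from \eqref{eq:mills-two-sided}: it gives $(q/4)s(1+a)\leq s\varphi(a)/2$, and $\varphi(v)\geq\varphi(1)\geq\varphi(a)/2$ holds whenever $\varphi(1)/\varphi(0)=e^{-1/2}\geq1/2$, which is true. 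This settles all cases.
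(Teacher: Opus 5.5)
Your proof is correct and follows essentially the paper's route: the upper Mills bound $\varphi(a)\geq q(1+a)/2$, the tail bound $\sqrt{\log(1/q)}\leq 1+a$, and a pointwise lower bound on $\varphi$ over a window of length at most one below $a$. The paper avoids your case split on $a$ (and the aside about $s>2a$) by using the single estimate $\varphi(a-w)/\varphi(a)=e^{aw-w^2/2}\geq e^{-1/2}$ for all $w\in[0,1]$ and $a\geq0$; also, the $\varphi(2)$ in your last paragraph should read $\varphi(1)$.
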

	
	\begin{proof}
		Write \(\Phi^{-1}(q)=-a\), so \(a\geq0\) and
		\(q=\overline\Phi(a)\).  Then
		\[
		\Phi(-a+s)-\Phi(-a)
		=\int_0^s\varphi(a-v)\,\df v.
		\]
		For \(0\leq v\leq\min\{s,1\}\),
		\[
		\frac{\varphi(a-v)}{\varphi(a)}
		=e^{av-v^2/2}\geq e^{-1/2}.
		\]
		By the upper Mills bound in \eqref{eq:mills-two-sided},
		\(\varphi(a)\geq q(1+a)/2\).  Hence
		\[
		\begin{aligned}
			\Phi(-a+s)-\Phi(-a)
			&\geq
			\frac{e^{-1/2}}2q(1+a)\min\{s,1\}\\
			&\geq
			\frac{e^{-1/2}}2q
			\min\{1,s(1+a)\}\\
			&\geq
			\frac q4
			\min\left\{1,s\sqrt{\log(1/q)}\right\}.
		\end{aligned}
		\]
		The second line uses
		\((1+a)\min\{s,1\}\geq\min\{1,s(1+a)\}\); the third uses
		\eqref{eq:log-tail-simple} and \(e^{-1/2}/2>1/4\).
	\end{proof}
	
	
	\subsection{Proving \cref{prop:separated}}
	
	\begin{proof}
		If \(r=0\), the right-hand side of \eqref{eq:separated} is zero, so the
		conclusion is immediate.  Assume \(r>0\) and,
		without loss of generality, that \(\pi(A)=q\).  Since
		\(\operatorname{dist}(A,B)\geq r\), the open enlargement \(A^r\) is
		disjoint from \(B\), and hence \(A^r\subseteq A\cup C\).  Combining
		\eqref{eq:bakry-ledoux-enlargement} and \cref{lem:gaussian-shift} gives
		\[
		\begin{aligned}
			\pi(C)
			&\geq\pi(A^r)-\pi(A)\\
			&\geq\Phi\bigl(\Phi^{-1}(q)+\sqrt{m}\,r\bigr)-q\\
			&\geq\frac q4
			\min\left\{1,r\sqrt{m\log(1/q)}\right\}.
		\end{aligned}
		\]
		This is exactly \eqref{eq:separated}.
	\end{proof}
	
	\section{Proof of \cref{prop:flow}}\label{app:defective}

	\subsection{A generic conductance bound}
	
	We begin with a generic one-step flow bound for kernels that satisfy a
	close-coupling condition under the Gaussian isoperimetric inequality in
	\cref{prop:separated}.  Combining close coupling with an isoperimetric
	inequality to lower-bound one-step flows, and hence conductance, is a standard
	technique.  Related defective- or truncated-conductance arguments appear in
	\citet{DwivediEtAl2019,WuSchmidlerChen2022,ChenGatmiry2023,LiuTong2026}.
	
	\begin{lemma}\label{lem:defective}
		Let \(K\) be \(\pi\)-reversible, let \(t>0\), and let
		\(G\subseteq\R^d\) satisfy
		\[
		x,y\in G,\qquad \norm{x-y}\leq\frac{\sqrt{t}}{16}
		\quad\Longrightarrow\quad
		\TV(K(x,\cdot),K(y,\cdot))\leq\frac34.
		\]
		Let \(S\) be a measurable set such that \(0<\pi(S)\leq1/2\), and suppose that
		\begin{equation}\label{eq:defective-budget}
			\pi(G^c)
			\leq
			 2^{-12} \pi(S)
			\min\left\{1,\sqrt{mt\log[1/\pi(S)]}\right\}.
		\end{equation}
		Then
		\begin{equation}\label{eq:defective-flow}
			\mathcal{J}_K(S,S^c)
			\geq
			 2^{-12} \pi(S)
			\min\left\{1,\sqrt{mt\log[1/\pi(S)]}\right\}.
		\end{equation}
	\end{lemma}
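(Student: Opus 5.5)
We use the standard close-coupling/isoperimetry argument. Write $q=\pi(S)$, $\eta=\min\{1,\sqrt{mt\log(1/q)}\}$ and $\epsilon=2^{-12}$. Suppose, for contradiction, that $\mathcal J_K(S,S^c)<\epsilon q\eta$. By reversibility, $\mathcal J_K(S^c,S)=\mathcal J_K(S,S^c)$. Define the bad sets
\[
S_1=\{x\in S\cap G: K(x,S^c)<1/8\},\qquad S_2=\{x\in S^c\cap G: K(x,S)<1/8\},
\]
and let $C=\R^d\setminus(S_1\cup S_2)$. Then $C\subseteq G^c\cup(S\cap\{K(\cdot,S^c)\geq1/8\})\cup(S^c\cap\{K(\cdot,S)\geq1/8\})$. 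Markov's inequality gives
\[
\pi(C)\leq \pi(G^c)+8\mathcal J_K(S,S^c)+8\mathcal J_K(S^c,S)<\epsilon q\eta+16\epsilon q\eta=17\epsilon q\eta .
\]

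The next step is separation of $S_1$ and $S_2$. If $x\in S_1$ and $y\in S_2$, then $\TV(K(x,\cdot),K(y,\cdot))\geq K(x,S)-K(y,S)>7/8-1/8=3/4$. Since both points lie in $G$, the hypothesis forces $\norm{x-y}>\sqrt t/16$, so $\operatorname{dist}(S_1,S_2)\geq r:=\sqrt t/16$.

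For the masses, $\pi(S_1)\geq q-\pi(C)\geq q(1-17\epsilon)\geq q/2$. Also $\pi(S_2)\geq 1-q-\pi(C)\geq 1/2-17\epsilon q\geq q/2$, and $\pi(S_1)\leq q\leq 1/2$. Hence $q':=\min\{\pi(S_1),\pi(S_2)\}\in[q/2,1/2]$.

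Apply \cref{prop:separated} to the partition $S_1,S_2,C$:
\[
\pi(C)\geq \frac{q'}{4}\min\{1,r\sqrt{m\log(1/q')}\}.
\]
The main technical care is the logarithm, since $q'$ may be larger than $q$. We want $\log(1/q')\gtrsim\log(1/q)$. If $q'\leq\sqrt q$, then $\log(1/q')\geq\frac12\log(1/q)$. The case $q'>\sqrt q$ cannot happen: because $q'\leq\pi(S_1)\leq q$ whenever $\pi(S_1)\leq\pi(S_2)$, and otherwise $q'=\pi(S_2)\leq\pi(S_1)\leq q$. So in fact $q/2\leq q'\leq q$, and $\log(1/q')\geq\log(1/q)$. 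Therefore
\[
\pi(C)\geq\frac q8\min\Bigl\{1,\tfrac1{16}\sqrt{mt\log(1/q)}\Bigr\}\geq\frac{q\eta}{128}.
\]
This contradicts $\pi(C)<17\epsilon q\eta=17\cdot2^{-12}q\eta<q\eta/128$, since $17/4096<1/128=32/4096$.

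Hence $\mathcal J_K(S,S^c)\geq\epsilon q\eta$, which is \eqref{eq:defective-flow}. The only delicate points are the constant bookkeeping in the last comparison and the bounds on the masses of $S_1$ and $S_2$, both of which hold with room to spare given $2^{-12}$.
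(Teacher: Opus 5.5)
Your proof is correct and follows essentially the same route as the paper: argue by contradiction, define $S_1,S_2$, get separation at scale $\sqrt t/16$ from the coupling hypothesis, use Markov and reversibility to bound $\pi(C)<17\cdot 2^{-12}q\eta$, and apply \cref{prop:separated} with $q/2\le q'\le q$ to reach the same $1/128$ versus $17\cdot 2^{-12}$ contradiction. The case discussion about $q'\le\sqrt q$ is superfluous, since $q'\le\pi(S_1)\le q$ holds directly.
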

	
	\begin{proof}
		Fix a measurable \(S\) with \(0<\pi(S)\leq1/2\).  Let
		\[
		\beta=\min\left\{1,\sqrt{mt\log[1/\pi(S)]}\right\}.
		\]
		Suppose, for contradiction, that
		\begin{equation} \label{ine:assumed_J}
			\mathcal{J}_K(S,S^c)< 2^{-12}  \beta \pi(S).
		\end{equation}
		Define
		\[
		\begin{aligned}
			S_1&=\{x\in S\cap G:K(x,S^c)< 1/8 \},\\
			S_2&=\{y\in S^c\cap G:K(y,S)< 1/8 \}.
		\end{aligned}
		\]
		For \(x\in S_1\) and \(y\in S_2\),
		\[
		\TV(K(x,\cdot),K(y,\cdot))
		\geq K(x,S)-K(y,S)> \frac34 .
		\]
		The close-coupling condition therefore implies
		\begin{equation}\label{eq:S1S2-distance}
			\operatorname{dist}(S_1,S_2)\geq\frac{\sqrt{t}}{16}.
		\end{equation}
		
		Markov's inequality and reversibility give
		\begin{equation} \label{ine:Markov}
			\pi(S_1^c \cap S \cap G) \leq 8 \mathcal{J}_K(S,S^c), \quad \pi(S_2^c \cap S^c \cap G) \leq 8 \mathcal{J}_K(S,S^c).
		\end{equation}
		Thus,
		\[
		\begin{aligned}
			\pi(S_1)
			&\geq \pi(S)-\pi(S\cap G^c)-8\mathcal{J}_K(S,S^c),\\
			\pi(S_2)
			&\geq1-\pi(S)-\pi(S^c\cap G^c)-8\mathcal{J}_K(S,S^c).
		\end{aligned}
		\]
		By \eqref{eq:defective-budget} and the assumed upper bound \eqref{ine:assumed_J},
		\[
		\pi(S_1)
		>\pi(S)- 9\cdot2^{-12}  \beta \pi(S),
		\qquad
		\pi(S_2)
		>1-\pi(S)- 9\cdot2^{-12}  \beta \pi(S).
		\]
		Since \(0<\beta\leq1\), \( 9\cdot2^{-12} <1/2\), and
		\(1-\pi(S) \geq \pi(S)\), it follows that
		\begin{equation}\label{eq:S1S2-masses}
			\pi(S_1)\geq\frac{\pi(S)}{2},
			\qquad
			\pi(S_2)\geq\frac{\pi(S)}{2}.
		\end{equation}
		Set
		\(q'=\min\{\pi(S_1),\pi(S_2)\}\).  Because \(S_1\subseteq S\),
		\[
		\frac{\pi(S)}{2}\leq q'\leq \pi(S) \leq\frac12.
		\]
		Apply \cref{prop:separated} to \(S_1,S_2\), and their complement.  Since
		\(q'\leq \pi(S)\), one has \(\log(1/q')\geq\log[1/\pi(S)]\).  Using
		\eqref{eq:S1S2-distance}, \eqref{eq:S1S2-masses}, and
		\(\min\{1,x/16\}\geq\min\{1,x\}/16\), we obtain
		\begin{align}
			\pi((S_1\cup S_2)^c)
			&\geq\frac{q'}4
			\min\left\{1,\frac1{16}\sqrt{mt\log(1/q')}\right\}\notag\\
			&\geq\frac{\pi(S)}{8}\cdot\frac1{16}
			\min\left\{1,\sqrt{mt\log[1/\pi(S)]}\right\}
			=\frac{\beta \pi(S)}{128}.
			\label{eq:defective-lower-complement}
		\end{align}
		
		On the other hand, by \eqref{ine:Markov}, \eqref{eq:defective-budget}, and \eqref{ine:assumed_J},
		\[
		\begin{aligned}
			\pi((S_1\cup S_2)^c) &= \pi(G^c) + \pi(G \cap S \cap S_1^c) + \pi(G \cap S^c \cap S_2^c) \\
			&\leq
			\pi(G^c)+ 16 \mathcal{J}_K(S,S^c)\\
			&< 17\cdot2^{-12}  \beta \pi(S)
			<\frac{\beta \pi(S)}{128},
		\end{aligned}
		\]
		contradicting \eqref{eq:defective-lower-complement}.  Therefore the assumed
		upper bound \eqref{ine:assumed_J} is impossible, and \eqref{eq:defective-flow}
		holds.
	\end{proof}

	\subsection{A technical lemma}
	
	The proof of \cref{prop:flow} via \cref{lem:defective} requires one additional
	technical lemma.
	
	\begin{lemma} \label{lem:exceptional-budget}
		Let \(c_{\mathrm r},C_{\mathrm r}\) be the constants in
		\cref{prop:overlap}.  Fix a number \(b_0>0\) satisfying
		\begin{equation} \label{eq:b0-choice}
			b_0\leq
			\min\left\{
			\frac12,
			c_{\mathrm r},
			\frac{1}{16C_{\mathrm r}}
			\right\}.
		\end{equation}
		Let \(p\geq  \widetilde p_\star:=A_0p_\star \),
		\(\theta\in(0,1]\), and
		\[
		t=\frac{\theta b_0}{L\sqrt{p(d+p)}},
		\]
		where $A_0$ satisfies
		\begin{equation}\label{eq:explicit-A0-choice}
			A_0\geq
			\frac{ (49/4) \log 2+(1/2)\log(2/b_0)}
			{\log 16-(1/2)}.
		\end{equation}
		Then, for every \(u\in[\log 2,p/2]\),
		\begin{equation}\label{eq:exceptional-budget-appendix}
			\left(\frac{\theta}{16}\right)^p
			\leq
			 2^{-12} e^{-u}\sqrt{m t u}.
		\end{equation}
		Moreover, \(mtu\leq b_0/2<1\) throughout this range.
	\end{lemma}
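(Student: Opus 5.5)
The plan is to take logarithms in \eqref{eq:exceptional-budget-appendix}, remove the dependence on $\theta$ and $u$ by monotonicity, and then check that the resulting inequality in $p$ alone is exactly what the choice \eqref{eq:explicit-A0-choice} of $A_0$ provides.

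\emph{Removing $\theta$.} Since $t$ is proportional to $\theta$, the right-hand side of \eqref{eq:exceptional-budget-appendix} scales like $\theta^{1/2}$, while the left-hand side scales like $\theta^p$. Because $p\geq\widetilde p_\star\geq A_0\geq2$ and $\theta\in(0,1]$, we have $\theta^p\leq\theta^{1/2}$. It therefore suffices to prove the claim for $\theta=1$, that is, to show
\[
16^{-p}\leq 2^{-12}e^{-u}\sqrt{\frac{m b_0 u}{L\sqrt{p(d+p)}}}.
\]

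\emph{Removing $u$.} The function $u\mapsto e^{-u}\sqrt u$ is decreasing on $[1/2,\infty)$, and $\log 2>1/2$. Hence the worst case on $[\log 2,p/2]$ is the endpoint $u=p/2$. After taking logarithms, the goal becomes
\[
p\log16\geq 12\log2+\frac p2+\frac12\log\frac{L\sqrt{p(d+p)}}{m b_0\,(p/2)}.
\]
Next I bound the logarithm. Using $L/m=\kappa$ and $\sqrt{p(d+p)}/(p/2)=2\sqrt{(d+p)/p}\leq2\sqrt{1+d}\leq2\sqrt{2d}$, the last term is at most
\[
\tfrac12\log\kappa+\tfrac12\log(2/b_0)+\tfrac14\log 2+\tfrac14\log d .
\]
So it is enough to show
\[
p\left(\log16-\tfrac12\right)\geq \tfrac{49}{4}\log2+\tfrac12\log(2/b_0)+\tfrac12(\log\kappa+\log d).
\]

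\emph{The bookkeeping step.} This is the one place needing care. Write $C_0=\tfrac{49}{4}\log2+\tfrac12\log(2/b_0)$. Condition \eqref{eq:explicit-A0-choice} says $A_0(\log16-\tfrac12)\geq C_0$, and $p\geq A_0p_\star$, so
\[
p\left(\log16-\tfrac12\right)\geq C_0\,p_\star=C_0+C_0(\log d+\log\kappa).
\]
Since $b_0\leq1/2$, we have $C_0\geq\tfrac12$, and the right-hand side dominates the required quantity. This completes \eqref{eq:exceptional-budget-appendix}.

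\emph{The final claim.} For $u\leq p/2$,
\[
mtu\leq \frac{m\theta b_0\,(p/2)}{L\sqrt{p(d+p)}}\leq\frac{b_0}{2\kappa}\sqrt{\frac{p}{d+p}}\leq\frac{b_0}{2}<1.
\]
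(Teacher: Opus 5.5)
Your proof is correct and takes essentially the same route as the paper: you eliminate $\theta$ via $\theta^{p}\le\theta^{1/2}$, reduce to the endpoint $u=p/2$ using the monotonicity of $e^{-u}\sqrt u$ on $[1/2,\infty)$, bound $\log((d+p)/p)\le\log(2d)$, and close with \eqref{eq:explicit-A0-choice} and $p\ge A_0p_\star$. The differences are cosmetic. You set $\theta=1$ before fixing $u$ rather than after. You keep the cruder term $\tfrac12\log d$ instead of $\tfrac14\log d$, which is harmless. The fact $A_0\ge2$, which you use without comment, does follow from \eqref{eq:explicit-A0-choice} and $b_0\le1/2$.
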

	
	\begin{proof}
		Since \(b_0\leq1/2\), the bound
		\eqref{eq:explicit-A0-choice} implies \(A_0\geq2\).
		Consequently, since \(p_\star\geq1\),
		\[
		p\geq\widetilde p_\star\geq A_0\geq2>2\log2>1,
		\]
		so \([\log2,p/2]\) is nonempty.
		The last assertion follows directly from \(m/L=1/\kappa\):
		\[
		mtu
		=\frac{\theta b_0u}{\kappa\sqrt{p(d+p)}}
		\leq
		\frac{\theta b_0}{2\kappa}
		\sqrt{\frac{p}{d+p}}
		\leq\frac{b_0}{2}.
		\]
		We focus on proving \eqref{eq:exceptional-budget-appendix}.
		Since \(u\mapsto e^{-u}\sqrt{u}\) decreases on \([1/2,\infty)\), the
		right-hand side of \eqref{eq:exceptional-budget-appendix} is smallest at
		\(u=p/2\).  At $u=p/2$ it equals
		\[
		 2^{-12} e^{-p/2}
		\left(\frac{\theta b_0}{2\kappa}\right)^{1/2}
		\left(\frac{p}{d+p}\right)^{1/4}.
		\]
		Dividing the desired inequality at \(u=p/2\) by
		\(\theta^{1/2}\) shows that its left-hand side contains the factor
		\(\theta^{p-1/2}\).  Since \(p\geq1\) and \(0<\theta\leq1\), this
		factor is at most one.  It is therefore sufficient that
		\begin{equation}\label{eq:A0-sufficient}
			p\left(\log 16-\frac12\right)
			\geq
			12 \log 2
			+\frac12\log\frac{2\kappa}{b_0}
			+\frac14\log\frac{d+p}{p}.
		\end{equation}
		Since \(p\geq1\) and \(d\geq1\),
		\[
		\log\frac{d+p}{p}\leq\log(2d)=\log d+\log2.
		\]
		The right-hand side of \eqref{eq:A0-sufficient} is therefore at most
		\[
		 \frac{49}{4} \log 2+\frac12\log\frac{2}{b_0}
		+\frac12\log\kappa+\frac14\log d.
		\]
		By \eqref{eq:explicit-A0-choice} and \(b_0\leq1/2\),
		\[
		A_0\left(\log16-\frac12\right)
		\geq \frac{49}{4} \log2+\frac12\log \frac{2}{b_0}
		\geq \frac{53}{4} \log2>\frac12.
		\]
		Since \(\log d\) and \(\log\kappa\) are nonnegative, these
		bounds and \(p\geq  \widetilde p_\star \) give
		\begin{align*}
			p\left(\log16-\frac12\right)
			&\geq A_0\left(\log16-\frac12\right)
			(1+\log d+\log\kappa)\\
			&\geq \frac{49}{4} \log2+\frac12\log \frac{2}{b_0}
			+\frac14\log d+\frac12\log\kappa.
		\end{align*}
		This proves \eqref{eq:A0-sufficient}, uniformly in all parameters.
	\end{proof}
	
	\subsection{Proving Proposition~\ref{prop:flow}}
	
	\begin{proof}
		We begin with the first assertion.
		Let $A_0$ and $b_0$ be universal constants satisfying
		\eqref{eq:b0-choice} and \eqref{eq:explicit-A0-choice}.
		Let \(p\geq  \widetilde p_\star:=A_0p_\star \),
		\(\theta\in(0,1]\), and
		\[
		t=\frac{\theta b_0}{L\sqrt{p(d+p)}}.
		\]
		Let $S\subseteq\R^d$ be measurable with
		$e^{-p/2}\leq\pi(S)\leq1/2$.
		By \cref{lem:exceptional-budget}, since
		$\log[1/\pi(S)]\in[\log 2,p/2]$,
		\[
		\left(\frac{\theta}{16}\right)^p
		\leq 2^{-12} \pi(S)\sqrt{mt\log[1/\pi(S)]}
		= 2^{-12} \pi(S)\min\left\{1,\sqrt{mt\log[1/\pi(S)]}\right\}.
		\]
		The equality follows from the final assertion of
		\cref{lem:exceptional-budget}, namely,
		$mt\log[1/\pi(S)]\leq1$.
		Because $b_0\leq c_{\mathrm r}$ and
		$b_0\leq1/(16C_{\mathrm r})$, the first assertion of
		\cref{prop:overlap} yields a measurable set $G_{t,p}\subseteq\R^d$ such
		that
		\[
		\pi(G_{t,p}^c) \leq \left[ C_{\mathrm{r}} L \frac{\theta b_0}{L\sqrt{p(d+p)}} \sqrt{p(d+p)} \right]^p \leq \left( \frac{\theta}{16}\right)^p,
		\]
		and
		\[
		x,y\in G_{t,p},\qquad
		\norm{x-y}\leq\frac{\sqrt{t}}{16}
		\quad\Longrightarrow\quad
		\TV(\cK_t(x,\cdot),\cK_t(y,\cdot))\leq\frac34.
		\]
		Then, by \cref{lem:defective},
		\[
		\mathcal{J}_{\cK_t}(S,S^c)
		\geq 2^{-12} \pi(S)\min\left\{1,\sqrt{mt\log[1/\pi(S)]}\right\}
		= 2^{-12} \pi(S)\sqrt{mt\log[1/\pi(S)]}.
		\]
		This proves the first assertion.
		
		For the second assertion, let $t'\in(0,1/(2Ld)]$ and let
		$S\subseteq\R^d$ be measurable with $0<\pi(S)\leq1/2$.  By the second
		assertion of \cref{prop:overlap},
		\[
		x,y\in\R^d,\quad
		\norm{x-y}\leq\frac{\sqrt{t'}}{16}
		\quad\Longrightarrow\quad
		\TV(\cK_{t'}(x,\cdot),\cK_{t'}(y,\cdot))\leq\frac34.
		\]
		Applying \cref{lem:defective} with $G=\R^d$ yields
		\[
		\mathcal{J}_{\cK_{t'}}(S,S^c)
		\geq 2^{-12} \pi(S)
		\min\left\{1,\sqrt{mt'\log[1/\pi(S)]}\right\}.
		\]
		This proves the second assertion.
	\end{proof}

	\section{Proof of Lemma~\ref{lem:fractional}}\label{app:fractional}
	
	The result can also be derived from the general Cheeger inequality of
	\citet{ChenWang2000}.  The proof below is direct and self-contained; we
	include the details because the precise dependence on \(\beta_j\) and
	\(\gamma_j\) is needed later.
	
	\begin{proof}
		For \(j=1,\ldots,N\), define the symmetric measure
		\[
		\mathsf J_j(\df x,\df y)
		=
		\varpi(\df x)K_j(x,\df y).
		\]
		The measure \(\mathsf J_j\) is symmetric because \(K_j\) is
		\(\varpi\)-reversible.  Both of its marginals equal \(\varpi\).
		
		We first prove 
		\begin{equation} \label{eq:gap-fractional-pre}
			\sum_{j=1}^N
			\gamma_j\cE_{K_j}(g,g)
			\geq
			\frac{
				\norm{g}_{L^2(\varpi)}^2
			}{
				2\sum_{j=1}^N \beta_j^2/\gamma_j
			}
		\end{equation}
		for a nonnegative function \(g\in L^2(\varpi)\)
		such that
		$\varpi(g>0)\leq 1/2$.
		For \(t\geq0\), let
		\[
		A_t=\{x:g(x)^2>t\}.
		\]
		Then \(\varpi(A_t)\leq1/2\), so the assumed fractional boundary inequality
		gives
		\[
		\varpi(A_t)
		\leq
		\sum_{j=1}^N
		\beta_j \mathcal{J}_{K_j}(A_t,A_t^c).
		\]
		Integrating this inequality over \(t\geq0\) and using the layer-cake
		identity yields
		\[
		\begin{aligned}
			\norm{g}_{L^2(\varpi)}^2
			&=
			\int_0^\infty \varpi(A_t)\,\df t\\
			&\leq
			\sum_{j=1}^N
			\beta_j
			\int_0^\infty
			\mathcal{J}_{K_j}(A_t,A_t^c)\,\df t.
		\end{aligned}
		\]
		
		For each \(j\), symmetry of \(\mathsf J_j\) gives the coarea identity
		\[
		\int_0^\infty
		\mathcal{J}_{K_j}(A_t,A_t^c)\,\df t
		=
		\frac12
		\iint
		\abs{g(x)^2-g(y)^2}
		\,\mathsf J_j(\df x,\df y).
		\]
		Indeed, for fixed \(x,y\),
		\[
		\int_0^\infty
		\1_{\{g(x)^2>t\geq g(y)^2\}}\,\df t
		=
		\bigl[g(x)^2-g(y)^2\bigr]_+,
		\]
		and symmetry changes the integral of the positive part into one half
		of the integral of the absolute difference.  Consequently,
		\[
		\norm{g}_{L^2(\varpi)}^2
		\leq
		\frac12
		\sum_{j=1}^N
		\beta_j
		\iint
		\abs{g(x)-g(y)}\,\abs{g(x)+g(y)}
		\,\mathsf J_j(\df x,\df y).
		\]
		
		Apply Cauchy--Schwarz simultaneously over the component index \(j\)
		and the pair \((x,y)\):
		\[
		\begin{aligned}
			\norm{g}_{L^2(\varpi)}^2
			\leq{}&
			\frac12
			\left\{
			\sum_{j=1}^N
			\gamma_j
			\iint
			[g(x)-g(y)]^2
			\,\mathsf J_j(\df x,\df y)
			\right\}^{1/2}\\
			&\times
			\left\{
			\sum_{j=1}^N
			\frac{\beta_j^2}{\gamma_j}
			\iint
			[g(x)+g(y)]^2
			\,\mathsf J_j(\df x,\df y)
			\right\}^{1/2}.
		\end{aligned}
		\]
		The first square root equals
		\[
		\left[
		2\sum_{j=1}^N
		\gamma_j\cE_{K_j}(g,g)
		\right]^{1/2}.
		\]
		For the second square root, the inequality
		\[
		[g(x)+g(y)]^2
		\leq
		2g(x)^2+2g(y)^2
		\]
		and the fact that both marginals of \(\mathsf J_j\) equal \(\varpi\)
		give
		\[
		\iint
		[g(x)+g(y)]^2
		\,\mathsf J_j(\df x,\df y)
		\leq
		4\norm{g}_{L^2(\varpi)}^2.
		\]
		It follows that
		\[
		\norm{g}_{L^2(\varpi)}^2
		\leq
		\left[
		2\sum_{j=1}^N
		\gamma_j\cE_{K_j}(g,g)
		\right]^{1/2}
		\norm{g}_{L^2(\varpi)}
		\left(
		\sum_{j=1}^N\frac{\beta_j^2}{\gamma_j}
		\right)^{1/2}.
		\]
		If \(g=0\) in \(L^2(\varpi)\), there is nothing to prove.  Otherwise,
		cancelling one factor of \(\norm{g}_{L^2(\varpi)}\) and squaring
		gives \eqref{eq:gap-fractional-pre}.

		Now let \(f\in L^2(\varpi)\), and let \(c\) be a median of \(f\); thus
		\[
		\varpi(f\geq c)\geq\frac12,
		\qquad
		\varpi(f\leq c)\geq\frac12.
		\]
		Define the positive and negative parts of $f-c$:
		\[
		g_+=(f-c)_+,
		\qquad
		g_-=(f-c)_-.
		\]
		Then
		\[
		\varpi(g_+>0)\leq\frac12,
		\qquad
		\varpi(g_->0)\leq\frac12.
		\]
		Moreover, for every \(x,y\),
		\[
		\begin{aligned}
			[f(x)-f(y)]^2
			\geq{}&
			\bigl[g_+(x)-g_+(y)\bigr]^2
			+
			\bigl[g_-(x)-g_-(y)\bigr]^2.
		\end{aligned}
		\]
		To see this, write
		\[
		f-c=g_+-g_-.
		\]
		The two differences
		\[
		g_+(x)-g_+(y)
		\quad\text{and}\quad
		g_-(x)-g_-(y)
		\]
		have opposite signs, so the cross term in the square of their
		difference is nonnegative.
		
		Therefore, by \eqref{eq:gap-fractional-pre},
		\[
		\begin{aligned}
			\cE_P(f,f)
			&\geq
			\sum_{j=1}^N
			\gamma_j\cE_{K_j}(f,f)\\
			&\geq
			\sum_{j=1}^N
			\gamma_j\cE_{K_j}(g_+,g_+)
			+
			\sum_{j=1}^N
			\gamma_j\cE_{K_j}(g_-,g_-)\\
			&\geq
			\frac{
				\norm{g_+}_{L^2(\varpi)}^2
				+
				\norm{g_-}_{L^2(\varpi)}^2
			}{
				2\sum_{j=1}^N \beta_j^2/\gamma_j
			}.
		\end{aligned}
		\]
		Since \(g_+\) and \(g_-\) have disjoint supports and \(f-c=g_+-g_-\),
		\[
		\norm{g_+}_{L^2(\varpi)}^2
		+
		\norm{g_-}_{L^2(\varpi)}^2
		=
		\norm{f-c}_{L^2(\varpi)}^2.
		\]
		The mean minimizes \(a\mapsto\norm{f-a}_{L^2(\varpi)}^2\), so
		\[
		\norm{f-c}_{L^2(\varpi)}^2
		\geq
		\Var_\varpi(f).
		\]
		We conclude that
		\[
		\cE_P(f,f)
		\geq
		\frac{
			\Var_\varpi(f)
		}{
			2\sum_{j=1}^N \beta_j^2/\gamma_j
		}.
		\]
		Taking the infimum over all nonconstant \(f\in L^2(\varpi)\) proves the
		claim.
	\end{proof}
	
	\section{Multiscale arithmetic}
	
	This appendix verifies a summation estimate used in
	\cref{sec:main-proof}.  The constants \(b_0 \leq 1/2\) and \(A_0 \geq 1\) are the universal
	choices made at the beginning of the proof of \cref{thm:main} in \cref{sec:main-proof}, and $\widetilde p_\star=A_0p_\star$.

	\begin{lemma} \label{lem:ladder-sum}
		Assume \( \widetilde p_\star <d\), and define \(p_j,J,t_j,\theta\) by
		\eqref{eq:pj}--\eqref{eq:theta-tj}.  With
		\[
		\gamma_j=\frac{t_j}{2H},
		\qquad
		\phi_0^2= 2^{-24} m t_0\log 2,
		\qquad
		\phi_j^2= 2^{-27} m t_jp_j\quad(j\geq1),
		\]
		there is a universal \(C \in (0,\infty)\) such that
		\[
		\sum_{j=0}^J\frac1{\gamma_j\phi_j^2}
		\leq
		C\frac{HL^2 \widetilde p_\star (d+ \widetilde p_\star )}
		{m\theta^2b_0^2}.
		\]
	\end{lemma}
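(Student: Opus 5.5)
The plan is a direct computation: write each summand explicitly, show that the terms with $j\geq1$ form a geometrically decaying sequence dominated by a constant multiple of the $j=1$ term, and compare the $j=0$ and $j=1$ terms with the target quantity. Since $\gamma_j=t_j/(2H)$, we have $1/(\gamma_j\phi_j^2)=2H/(t_j\phi_j^2)$, and therefore
\[
\frac1{\gamma_0\phi_0^2}=\frac{2^{25}H}{m t_0^2\log 2},
\qquad
\frac1{\gamma_j\phi_j^2}=\frac{2^{28}H}{m t_j^2p_j}\quad(j\geq1).
\]
Substituting $t_j=\theta b_0/[L\sqrt{p_j(d+p_j)}]$ gives $t_j^{-2}=L^2p_j(d+p_j)/(\theta^2b_0^2)$. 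The $j=0$ term is then exactly $(2^{25}/\log2)\,HL^2\widetilde p_\star(d+\widetilde p_\star)/(m\theta^2b_0^2)$, already of the claimed form. For $j\geq1$ the factor $p_j$ cancels, so
\[
\frac1{\gamma_j\phi_j^2}=2^{28}\frac{HL^2(d+p_j)}{m\theta^2b_0^2}.
\]

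The remaining task is to bound $\sum_{j=1}^J(d+p_j)$ by a constant times $\widetilde p_\star(d+\widetilde p_\star)$. At first sight the sum looks like $Jd$, which is not bounded by $\widetilde p_\star d$, and this is the step I expect to be the main obstacle. The resolution uses $p_j=4^j\widetilde p_\star$ and $p_J<4d$ from \eqref{eq:pJ-range}. Geometric summation gives $\sum_{j\leq J}p_j\leq \tfrac43 p_J<\tfrac{16}{3}d$, and this part is harmless. The term $Jd$ is handled by the counting bound $4^{J-1}\widetilde p_\star<d$, which gives $J\leq 1+\log_4(d/\widetilde p_\star)$. Then $J d\leq d+d\log_4(d/\widetilde p_\star)$.

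To compare this with $\widetilde p_\star d$, use $\log_4 x\leq x$ to get $d\log_4(d/\widetilde p_\star)\leq d^2/\widetilde p_\star$. This is not obviously smaller than $\widetilde p_\star d$, so the logarithm has to be absorbed more carefully. I would use $\log(1/\pi)$-free facts: $\widetilde p_\star=A_0(1+\log d+\log\kappa)\geq \log d$, hence $\log_4(d/\widetilde p_\star)\leq \log d\leq \widetilde p_\star$. This yields $Jd\leq d(1+\widetilde p_\star)\leq 2\widetilde p_\star d$. This is exactly where the logarithmic choice of $p_\star$ enters.

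Combining the pieces,
\[
\sum_{j=1}^J(d+p_j)\leq 2\widetilde p_\star d+\tfrac{16}{3}d\leq C\,\widetilde p_\star(d+\widetilde p_\star),
\]
using $\widetilde p_\star\geq 1$. Adding the $j=0$ term gives the claimed bound with a universal constant $C$.
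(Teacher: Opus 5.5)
Your proposal is correct and follows essentially the same route as the paper's proof. Both compute the $j=0$ term and the $j\geq1$ terms $2^{28}HL^2(d+p_j)/(m\theta^2b_0^2)$ explicitly. Both bound $\sum_{j\ge1} p_j < 16d/3$ by geometric summation, and both control $Jd$ via $4^{J-1}\widetilde p_\star<d$ and the fact that $\widetilde p_\star\geq\log d$. The one unstated step, $J\geq1$, is immediate from $p_0=\widetilde p_\star<d$.
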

	
	\begin{proof}
		For the first component,
		\[
		\frac1{\gamma_0\phi_0^2}
		=
		\frac{ 2^{25} H}{m t_0^2\log 2}
		=
		\frac{ 2^{25} HL^2 \widetilde p_\star (d+ \widetilde p_\star )}
		{m\theta^2b_0^2\log 2}.
		\]
		For \(j\geq1\),
		\[
		\frac1{\gamma_j\phi_j^2}
		=
		\frac{ 2^{28} H}{m t_j^2p_j}
		=
		\frac{ 2^{28} HL^2(d+p_j)}
		{m\theta^2b_0^2}.
		\]
		
		It remains to show
		\[
		\widetilde p_\star (d+ \widetilde p_\star ) + \sum_{j=1}^J (d+p_j) \leq C  \widetilde p_\star (d+ \widetilde p_\star )
		\]
		for a universal~$C$.
		Because \( \widetilde p_\star <d\), one
		has \(J\geq1\).  Minimality of \(J\) gives
		\(4^{J-1} \widetilde p_\star <d\), and hence
		\[
		J<1+\log_4\frac d{ \widetilde p_\star }
		\leq1+\log_4 d.
		\]
		The geometric-series formula and \eqref{eq:pJ-range} give
		\[
		\sum_{j=1}^Jp_j
		= \widetilde p_\star \sum_{j=1}^J4^j
		=\frac{4p_J-4 \widetilde p_\star }{3}
		<\frac{16d}{3}.
		\]
		Moreover, \( \widetilde p_\star \geq A_0(1+\log d)\),
		\(A_0\geq1\), and \( \widetilde p_\star \geq1\) imply
		\[
		1+\log_4 d
		\leq1+\frac{ \widetilde p_\star }{A_0\log4}
		\leq\left(1+\frac1{\log4}\right) \widetilde p_\star .
		\]
		Therefore
		\[
		\sum_{j=1}^J(d+p_j)
		=Jd+\sum_{j=1}^Jp_j
		\leq C  \widetilde p_\star  d
		\]
		for some universal constant $C \in (0,\infty)$.
		The desired inequality then follows by enlarging~$C$.
	\end{proof}

	\section{A spectral gap for small fixed step sizes}
	\label{app:small-fixed-step-gap}
	
	The uniform acceptance estimate used in the proof of
	\cref{prop:overlap} also yields a spectral-gap lower bound for
	fixed-step MALA. 
	We record this consequence below.
	The result is also described in \cref{rem:minimax-fixed-step-ceiling}.
	
	\begin{proposition}\label{prop:small-fixed-step-gap}
		Suppose that \(U\) satisfies
		\eqref{eq:first-order-assumptions}, and let \(\kappa=L/m\).
		There is a universal constant \(c\in(0,\infty)\) such that,
		for every
		\[
		0<h\leq\frac{1}{2Ld},
		\]
		the fixed-step MALA kernel satisfies
		\[
		\Gap(P_h)\geq cmh.
		\]
		In particular, choosing \(h=1/(2Ld)\) gives
		\[
		\Gap(P_h)\geq\frac{c}{\kappa d}.
		\]
	\end{proposition}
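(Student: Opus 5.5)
The plan is to repeat the argument behind the second assertions of \cref{prop:overlap,prop:flow}, but for the fixed kernel $P_h$ rather than the averaged kernel $\cK_{t'}$. Those estimates will then be fed into the standard Cheeger inequality, written as \cref{thm:aggregation} with $N=1$.

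\emph{Step 1: close coupling.} Fix $0<h\leq 1/(2Ld)$. The proof of the final assertion of \cref{prop:overlap} shows, for every $x$, that
\[
\alpha_h(x,Y)\geq e^{-Lh\norm{Z}^2/2}.
\]
Hence the rejection probability satisfies $r_h(x)\leq 1-e^{-Lhd/2}\leq 1-e^{-1/4}$, uniformly in $x$. Since $h\leq 1/L$, the Baillon--Haddad argument makes the mean map $x\mapsto x-h\nabla U(x)$ nonexpansive. Pinsker's inequality then gives
\[
\TV(Q_h(x,\cdot),Q_h(y,\cdot))\leq \frac{\norm{x-y}}{\sqrt{8h}}.
\]
Together with \eqref{eq:mala-proposal-tv-exact} and the triangle inequality, this yields
\[
\TV(P_h(x,\cdot),P_h(y,\cdot))\leq 2(1-e^{-1/4})+\frac{1}{16\sqrt8}<\frac34
\qquad\text{whenever }\norm{x-y}\leq\frac{\sqrt h}{16}.
\]
This holds for all $x,y\in\R^d$, so no exceptional set is needed. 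In \cref{prop:overlap}, the radius $\sqrt{t'}/16$ with $h\in[t'/2,t']$ was handled by the bound $1/(2\sqrt{t'})$. Here $h$ is exact, so the radius $\sqrt h/16$ is used directly.

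\emph{Step 2: flow.} Apply \cref{lem:defective} with $K=P_h$, $t=h$ and $G=\R^d$. The budget condition \eqref{eq:defective-budget} holds trivially because $\pi(G^c)=0$. For every $S$ with $0<\pi(S)\leq1/2$, this gives
\[
\mathcal J_{P_h}(S,S^c)\geq 2^{-12}\pi(S)\min\Bigl\{1,\sqrt{mh\log[1/\pi(S)]}\Bigr\}.
\]
Since $\log[1/\pi(S)]\geq\log2$ and $mh\log 2\leq \log2/(2\kappa d)<1$, the right side is at least $2^{-12}\sqrt{mh\log 2}\,\pi(S)$.

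\emph{Step 3: Cheeger.} Apply \cref{thm:aggregation} with $N=1$, $P=K_1=P_h$, $\gamma_1=1$ and $\phi_1=2^{-12}\sqrt{mh\log2}$. This gives
\[
\Gap(P_h)\geq \frac{\phi_1^2}{2}=2^{-25}(\log 2)\,mh.
\]
Taking $h=1/(2Ld)$ gives $\Gap(P_h)\geq c/(\kappa d)$.

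Step 1 is the only step that needs checking, and the one spot to verify is that the numerical constant comes out below $3/4$ with the non-averaged Pinsker bound; $2(1-e^{-1/4})\approx0.442$, so there is ample room. Everything else is a direct reuse of results already proved.
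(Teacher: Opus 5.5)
Your proposal is correct and follows the paper's proof essentially step for step: the uniform acceptance bound together with the nonexpansive-mean Pinsker estimate gives close coupling at radius $\sqrt h/16$ for all $x,y$; \cref{lem:defective} with $G=\R^d$ then gives the flow bound; and the one-kernel Cheeger step yields $2^{-25}(\log 2)\,mh$. The only cosmetic difference is that you invoke \cref{thm:aggregation} with $N=1$, whereas the paper applies \cref{lem:fractional} directly with $\beta_1=1/\phi_1$, which is exactly how \cref{thm:aggregation} itself is proved.
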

	
	\begin{proof}
		Fix \(h\in(0,1/(2Ld)]\). For \(x\in\R^d\), let
		\[
		Y=x-h\nabla U(x)+\sqrt{2h}\,Z,
		\qquad Z\sim N_d(0,I_d).
		\]
		Since \(h\leq1/L\), the argument following
		\eqref{eq:MH-ratio-expansion} in the proof of
		\cref{prop:overlap} gives
		\[
		\begin{aligned}
			1-r_h(x)
			&=\E[\alpha_h(x,Y)]\geq e^{-Lhd/2}
			\geq e^{-1/4}.
		\end{aligned}
		\]
		Thus \(r_h(x)\leq1-e^{-1/4}\) for every \(x\in\R^d\).
		
		Because \(U\) is convex and has an \(L\)-Lipschitz gradient,
		the proposal mean map \(x\mapsto x-h\nabla U(x)\) is
		nonexpansive whenever \(h\leq2/L\), as shown in the proof of
		\cref{prop:overlap}. The Gaussian comparison leading to
		\eqref{eq:proposal-TV} therefore gives
		\[
		\TV(Q_h(x,\cdot),Q_h(y,\cdot))
		\leq\frac{\norm{x-y}}{\sqrt{8h}}.
		\]
		Combining this inequality with
		\eqref{eq:mala-proposal-tv-exact} and the triangle inequality,
		we find that, whenever \(\norm{x-y}\leq\sqrt h/16\),
		\[
		\begin{aligned}
			\TV(P_h(x,\cdot),P_h(y,\cdot))
			&\leq r_h(x)
			+\TV(Q_h(x,\cdot),Q_h(y,\cdot))
			+r_h(y)\\
			&\leq2(1-e^{-1/4})+\frac{1}{16\sqrt8}
			<\frac34.
		\end{aligned}
		\]
		This overlap bound holds for all \(x,y\in\R^d\).
		
		Apply \cref{lem:defective} with \(K=P_h\), \(t=h\),
		and \(G=\R^d\). Its exceptional-set condition holds because
		\(\pi(G^c)=0\). Consequently, for every measurable \(S\)
		such that \(0<\pi(S)\leq1/2\),
		\[
		\mathcal{J}_{P_h}(S,S^c)
		\geq
		\pRevision{2^{-12}}\pi(S)
		\min\left\{1,\sqrt{mh\log[1/\pi(S)]}\right\}.
		\]
		Since
		\[
		\log[1/\pi(S)]\geq\log2,
		\qquad
		mh\log2\leq\frac{\log2}{2\kappa d}<1,
		\]
		it follows that
		\[
		\mathcal{J}_{P_h}(S,S^c)
		\geq
		 2^{-12} \sqrt{mh\log2}\,\pi(S).
		\]
		
		The one-kernel case of \cref{lem:fractional}, which is the
		usual Cheeger inequality \citep{LawlerSokal1988}, now gives
		\[
		\Gap(P_h)
		\geq
		\frac12\left( 2^{-12} \sqrt{mh\log2}\right)^2
		=
		 2^{-25} (\log2)\,mh.
		\]
		More explicitly, the lemma applies with
		\(P=K_1=P_h\), \(\varpi=\pi\), \(\gamma_1=1\), and
		\(\beta_1= 2^{12} /\sqrt{mh\log2}\).
		
		For \(h=1/(2Ld)\), the preceding bound becomes
		\[
		\Gap(P_h)
		\geq
		\frac{ 2^{-26} \log2}{\kappa d}.
		\]
		Both assertions therefore hold with
		\(c= 2^{-26} \log2\).
	\end{proof}

	\pdfbookmark[1]{References}{sec:references}

	\bibliographystyle{plainnat}
	\bibliography{uniform_random_mala}

\end{document}